\DeclareMathOperator{\sign}{sgn}
\theoremstyle{definition}
\newtheorem{defn}{\protect\definitionname}[section]
\theoremstyle{plain}
\newtheorem{thm}{\protect\theoremname}[section]
  \newenvironment{proof}[1][\proofname]{\par
    \normalfont\topsep6\p@\@plus6\p@\relax
    \trivlist
    \itemindent\parindent
    \item[\hskip\labelsep
          \scshape
      #1]\ignorespaces
  }{%
    \endtrivlist\@endpefalse
  }
  \providecommand{\proofname}{Proof}
\theoremstyle{plain}
\newtheorem{cor}{\protect\corollaryname}[section]
\newtheorem{observation}{\textbf{Observation}}[section]
\newcommand{\cmark}{\textcolor{ForestGreen}{\ding{51}}} 
\newcommand{\xmark}{\textcolor{RedOrange}{\ding{55}}} 
\newcolumntype{M}[1]{>{\centering\arraybackslash}m{#1}}
\providecommand{\corollaryname}{Corollary}
\providecommand{\definitionname}{Definition}
\providecommand{\theoremname}{Theorem}
\begin{document}
\fancyhead[OL]{} 
\fancyhead[OR]{} 
\fancyhead[ER]{}
\fancyhead[EL]{} 
\fancyfoot[EL]{}\fancyfoot[EC]{\thepage} 
\fancyfoot[OR]{}\fancyfoot[OC]{\thepage} 
\renewcommand{\headrulewidth}{0.0pt}%
\title{On Nonlinear Closures for Moment Equations Based on Orthogonal Polynomials}
\author{E.
Yilmaz\thanks{email: \texttt{yilmaz@acom.rwth-aachen.de}},~~G. Oblapenko\thanks{email: \texttt{oblapenko@acom.rwth-aachen.de}},~~and
M. Torrilhon\thanks{email: \texttt{mt@acom.rwth-aachen.de}} \\
Applied and Computational Mathematics, \\
RWTH Aachen Germany }
\date{(2024)}
\maketitle
\begin{abstract}
In the present work, an approach to the moment closure problem on the basis of orthogonal polynomials derived from Gram matrices is proposed. Its properties are studied in the context of the moment closure problem arising in gas kinetic theory, for which the proposed approach is proven to have multiple attractive mathematical properties. Numerical studies are carried out for model gas particle distributions and the approach is compared to other moment closure methods, such as Grad's closure and the maximum-entropy method. The proposed ``Gramian'' closure is shown to provide very accurate results for a wide range of distribution functions. 
\end{abstract}

\section{Introduction}
The moment closure problem, that is, the estimation of a moment of an unknown probability distribution function given the values of the lower-order moments~\cite{kuehn2016moment}, arises in a multitude of scientific contexts, such as gas dynamics and kinetic theory~\cite{levermore1996moment,torrilhon2016modeling}, radiation transport~\cite{frank2006partial}, atmospheric sciences~\cite{milbrandt2005multimoment,yuan2012extended}, and biology~\cite{gillespie2009moment}. It is associated with problems defined by the transport of the underlying probability distribution function, which can be reduced to an evolution equation for its moments. This leads to the logical requirement that the system of evolution equations for the moments of the distribution function, closed via a moment closure, is hyperbolic, that is, corresponds to transport. A multitude of moment closure methods has been developed, such as the approach of Grad~\cite{grad1949kinetic}, quadrature-based approaches~\cite{mcgraw1997description}, projection methods~\cite{koellermeier2014framework,cai2015framework}, and entropy-based methods~\cite{levermore1997entropy,bohmer2020entropic,abdelmalik2023moment} to name a few. However, the question of solving the moment closure problem in a robust, computationally efficient, gauge invariant, and provably globally hyperbolic fashion remains open. As the current work is related to quadrature-based methods for solving the moment closure problem, we discuss them in more detail.

Quadrature-based approaches to the solution of the moment closure problem (hereafter referred to as ``QMOM'') were first introduced in~\cite{mcgraw1997description} and further extended in~\cite{fox2008quadrature,desjardins2008quadrature}. However, they have been found to
result in weakly hyperbolic systems~\cite{chalons2010beyond,fox2018conditional,huang2020stability}, resulting in their limited applicability
to non-equilibrium regimes~\cite{laplante2016comparison,taunay2023quadrature}. Extensions to the method have been sought
that would ensure strict hyperbolicity of the system~\cite{fox2018conditional,van2021higher}, but also provide greater flexibility in terms of computational efficiency and/or accuracy~\cite{fox2023generalized}. Other extensions include the so-called ``extended quadrature method of moments''~\cite{chalons2010multi}, where the solution is sought for as a combination of Gaussian distributions; however, these versions of the quadrature-based approach are not considered in the present work. In~\cite{pichard2020moment,pichard2022moment}, a closure technique based on projection was proposed, with provable weak hyperbolicity and entropy dissipation; this technique can be thought of as a combination of the entropy-based~\cite{levermore1997entropy} and QMOM~\cite{mcgraw1997description,fox2008quadrature} approaches. 

Another related class of closures uses the so-called Kershaw closure~\cite{kershaw1976flux,monreal2008higher,monreal2013moment,schneider2016kershaw} to avoid numerical difficulties at the realizability boundary that arise for example in entropy-base approaches~\cite{alldredge2012high}. These closures investigate the realizability boundary in detail and interpolate between that boundary and the equilibrium point. Typically applied to radiative transfer, a similar interpolation technique was used \cite{mcdonald2013affordable} for gas dynamics.

Recently, in~\cite{FoxLaurent} a novel approach, dubbed ``HyQMOM'', to the moment closure problem was proposed, which does not rely on any reconstruction of the underlying probability distribution function, along with a computational algorithm for the calculation of the required moment.
The present work offers an alternative view on the mathematical properties of the ``HyQMOM'' closure, focusing on structure-preserving aspects such as strict hyperbolicity, gauge invariance, and equilibrium preservation. We present
\begin{itemize}
\item a class of nonlinear hyperbolic closures based on orthogonal polynomials both for the case of an even and odd number of given moments,
\item explicit formulas to calculate the new closures that only require the solution of one or two reduced linear systems,
\item a complete proof of hyperbolicity of the new closures for a general number of moments,
\item assessment of the properties of gauge invariance and equilibrium preservation for the closures,
\item an empirical study of the approximation quality of the new closures in comparison to classical techniques, like the Grad-expansion or maximum-entropy distribution
\item a numerical study of the behavior of the closure near the realizability boundary.
\end{itemize}
The recent pre-print \cite{zhang2024dissipativeness} also investigates hyperbolicity and invariance properties of the theoretical framework of the ``HyQMOM'' closure. However, the methods of the present paper are different and a detailed comparison is left for future work.

The paper is structured as follows.
First, the general moment closure problem is defined in Sec.~\ref{sec:momclosure} and related to kinetic equations. Then, desirable properties which moment closures should preserve are defined and discussed in Sec.~\ref{sec:structurepresmomclosure}.
Next, in Sec.~\ref{subsec:OrthPoly}, the orthogonal polynomial framework is introduced on the basis of Gram matrices. In Sec.~\ref{sec:gramianclosure} the Gramian closure procedure is introduced, and its deficiencies are discussed. On the basis of the Gramian closure, an extended closure, corresponding to the ``HyQMOM'' closure of~\cite{FoxLaurent}, is introduced in Sec.~\ref{sec:extendedGram}, which allows for predicting an odd-order moment based on known moments of up to an even degree. The closure is then proven to possess all the desired mathematical properties.
In Sec.~\ref{sec:The-Odd-Case}, a similar approach is considered, but used for predicting the next even-order moment based on moments of up to an odd degree, and it is shown to not have all of the beneficial properties that the even-order case has. Finally, numerical results are presented in Sec.~\ref{sec:numerical} for several model gas-kinetic distribution functions, and compared with existing approaches, such as Grad's method and the maximum entropy method.

\fancyhead[OL]{Yilmaz, Oblapenko, Torrilhon (2024)} 
\fancyhead[OR]{Nonlinear Closures} 
\fancyhead[ER]{Yilmaz, Oblapenko, Torrilhon (2024)} 
\fancyhead[EL]{Nonlinear Closures} 
\fancyfoot[EL]{}\fancyfoot[EC]{\thepage} 
\fancyfoot[OR]{}\fancyfoot[OC]{\thepage} 
\renewcommand{\headrulewidth}{0.4pt}%

\section{Moment Closures\label{sec:momclosure}}

We consider moments $u_{k}\in\mathbb{R}$ of a distribution $f:\mathbb{R}\to\mathbb{R}^{+},c\mapsto f(c)$
on the real line 
\begin{align}
u_{k} & =\int_{\mathbb{R}}c^{k}f(c)dc
\end{align}
where $f(c)$ satisfies the usual requirements of, e.g., positivity
and integrability. The real line is motivated from the underlying
application, see below, but could also be an interval. The distribution $f$ is supposed to be fixed to a generic
function, which is formally unknown. Only the set of moments up to
order $M\in\mathbb{N}$, that is, the vector $u:=\left\{ u_{k}\right\} _{k=0,1,..,M}\in\mathbb{R}^{M+1},$
is what we know about the distribution. We note, that not all vectors
in $\mathbb{R}^{M+1}$ are realizable as moments of a distribution
function~\cite{hamburger1944hermitian,Shohat1945problem,gautschi1996orthogonal,schmuedgen2017moment}. The subset of realizable moments is denoted
by $\mathcal{R}\subset\mathbb{R}^{M+1}$. 

The main application of moment closures this
paper has in mind is the solution of a 1D kinetic transport
equation
\begin{align}
\partial_{t}f+c\partial_{x}f & =S(f)\quad\quad(x,t,c)\in\Omega\times[0,T]\times\mathbb{R}\label{eq:kineticEqn}
\end{align}
for $f(x,t,c)$ on some spatial interval $\Omega\subset\mathbb{R}$
and with time in $[0,T]$. Here, the distribution space $c\in\mathbb{R}$
represents the velocities of the underlying particle ensemble at any
$(x,t)$. Equation (\ref{eq:kineticEqn}) serves as a model for the
Boltzmann equation in kinetic gas dynamics \cite{Cercignani1988}.
The right hand side $S(f)$ represents the collision operator of the
particles. 

For space-time dependent distributions the moments become functions
of space and time as well. Taking moments of the kinetic equation
leads to a system of partial differential equations in space-time
as evolution for $u_{k}(x,t)$. The system has a hierarchical structure
\begin{align}
\begin{array}{rl}
\partial_{t}u_{0}+\partial_{x}u_{1} & \hspace{-5pt}=s_0\\
\partial_{t}u_{1}+\partial_{x}u_{2}\, & \hspace{-5pt}=s_1\\
\vdots\\
\partial_{t}u_{M}+\partial_{x}u_{M+1} & \hspace{-5pt}=s_M
\end{array}\label{eq:momentSystem}
\end{align}
such that locally in space and time a relation for $u_{M+1}$
is needed to close the system and obtain field equations for the moments
$\{u_{k}\}_{k=0,...,M}$, see also \cite{grad1949kinetic,StruchBook} for the case of gas dynamics. 
In the system (\ref{eq:momentSystem}) the $s_k$ are the moments of the collision 
operator $S(f)$. 
\begin{defn}[Moment Closure]
A prediction of the next moment based on a set of given moments in
the form 
\begin{align}
u_{M+1} & =\mathcal{C}\left(u_{0},u_{1},\cdots, u_{M}\right)\label{eq:closure}
\end{align}
based on a smooth function $\mathcal{C}:\mathcal{R}\to\mathbb{R}$
is called a \emph{moment closure}. 
\end{defn}
Obviously, there is no immediate procedure how the next moment can
be predicted for arbitrary distributions $f$. As a guideline, the closure must exhibit structural properties implied
by the kinetic equation and the physical scenario it represents. Mathematically,
the closure predominantly influences the well-posedness of the solution
of the system (\ref{eq:momentSystem}). In the remainder of the paper
$(x,t)$ will be considered fixed. The role of realizability will be discussed 
in Sec.~\ref{sec:realizboundary}.

\section{Structure-preserving Moment Closures\label{sec:structurepresmomclosure}}

Below we will discuss three aspects which will be relevant for the
nonlinear closures of this paper. If these aspects hold for a specific closure,
we will refer to it as ``structure-preserving closure''.

\subsection{Hyperbolicity}

The moment system (\ref{eq:momentSystem}) is a first order, quasi-linear
system of conservation laws with relaxation
\begin{align}
\partial_{t}U+\partial_{x}F(U) & =S(U)
\end{align}
with a variable vector $U:=\{u_{k}\}_{k=0,...,M}$, a flux function
$F(U):=\{u_{k+1}\}_{k=0,...,M}$ where $u_{M+1}=\mathcal{C}(u_{0},u_{1},\cdots, u_{M})$,
and a algebraic production vector $S(U)$.
It is well-known that the well-posedness of solutions of conservation laws
relies on hyperbolicity, which states that the Jacobian of the flux
function
\begin{align}
DF(U) & =\left(\begin{array}{cccc}
0 & 1\\
 & \ddots & \ddots\\
 &  & 0 & 1\\
\partial_{0}\mathcal{C} & \partial_{1}\mathcal{C} & \cdots & \partial_{M}\mathcal{C}
\end{array}\right)\in\mathbb{R}^{(M+1)\times(M+1)}
\end{align}
has real eigenvalues and is diagonalizable for all realizable moments.
Here we used the abbreviation $\partial_{j}\mathcal{C}:=\partial\mathcal{C}(u_{0},u_{1},\cdots, u_{M})/\partial u_{j}$. 
\begin{defn}[Characteristic Polynomial]
Let $\mathcal{C}$ be a moment closure. The \emph{characteristic
polynomial} of the moment closure is defined by
\begin{align}
\mathcal{P}_{M+1}\left(z;u_{0},u_{1},\cdots, u_{M}\right) & =z^{M+1}-\sum_{j=0}^{M}\partial_{j}\mathcal{C}(u_{0},\cdots, u_{M})\,z^{j}\label{eq:characteristic-polynomial}
\end{align}
where the coefficients depend nonlinearly on the moments through the
closure.
\end{defn}
The form of the characteristic polynomial is directly given by the
moment closure and the hierarchical nature of the moment system. Consequently,
the choice of the closure relation (\ref{eq:closure}) determines
the hyperbolicity of the resulting evolution. 
\begin{defn}[Strict Hyperbolicity]
A moment closure $\mathcal{C}$ is called \emph{strictly hyperbolic},
if $\mathcal{P}_{M+1}\left(z\right)$ has $M+1$ distinct, real-valued
roots for all realizable moments. 
\end{defn}
Distinct roots directly imply the diagonalizability of the Jacobian. For the case of non-strict
hyperbolicity, a lack of eigenvectors of the Jacobian may occur which
leads to delta shocks, as described for example in \cite{mitrovic2007delta}.

\subsection{Gauge-Invariance}

\noindent We consider a gauge transformation for the distribution
in the form
\begin{align}
g(\widetilde{c}) & =\rho^{-1}\theta^{1/2}f(\theta^{1/2}\widetilde{c}-v)\label{eq:gaugeTransform}
\end{align}
using general gauge parameters $\rho,\theta,v\in\mathbb{R}$ with
$\rho,\theta>0$, which represent an arbitrary density, temperature
and velocity. The transformed moments can be calculated by
\begin{align}
\widetilde{u}_{k} & =\int_{\mathbb{R}}\widetilde{c}^{k}g(\widetilde{c})d\widetilde{c}=\rho^{-1}\theta^{1/2}\int_{\mathbb{R}}\widetilde{c}^{k}f(\theta^{1/2}\widetilde{c}-v)d\widetilde{c}=\frac{1}{\rho}\int_{\mathbb{R}}\left(\frac{c+v}{\theta^{1/2}}\right)^{k}f(c)dc\\
 & =\frac{1}{\rho\theta^{k/2}}\int_{\mathbb{R}}\sum_{j=0}^{k}{\textstyle \binom{k}{j}}v^{j}c^{k-j}f(c)dc\nonumber 
\end{align}
which yields a linear combination of the original moments 
\begin{align}
\widetilde{u}_{k}= & \frac{1}{\rho\theta^{k/2}}\sum_{j=0}^{k}{\textstyle \binom{k}{j}}v^{j}u_{k-j}\quad\quad\quad k=0,...M+1.\label{eq:gaugeTransformMoments}
\end{align}
Note that for $(\rho,v,\theta)=(1,0,1)$ we have $\widetilde{c}=c$
and $g(c)=f(c)$ and $\widetilde{u}_{k}=u_{k}$. The kinetic equation
(\ref{eq:kineticEqn}) is gauge-invariant if the parameters are constant
and space-time is additionally transformed as $(x,t)=(\widetilde{x}-v\widetilde{t},\widetilde{t})$.
\begin{defn}[Gauge-Invariant Closure]
\label{def:gauge-invariance}A moment closure $\mathcal{C}$ is called
\emph{gauge-invariant}, if the closure relation does not depend on
the gauge parameters $(\rho,v,\theta)$, that is,
\begin{align}
\widetilde{u}_{M+1}=\mathcal{C}\left(\widetilde{u}_{0},\cdots,\widetilde{u}_{M}\right)\quad\Leftrightarrow & \quad u_{M+1}=\mathcal{C}\left(u_{0},\cdots, u_{M}\right)\label{eq:gaugeInvariantClosure}
\end{align}
for the transformed moments in (\ref{eq:gaugeTransformMoments}). 
\end{defn}
Gauge-invariance includes Galilean invariance with the constant velocity
$v$, and scaling invariance of the velocity variable with the thermodynamic
speed $\sqrt{\theta}$. The density $\rho$ represents a normalization
scale. Some interesting insight is given by the following theorem,
similar to the one deduced in Fox \& Laurent \cite{FoxLaurent} based on the explicit study of the
Jacobian of the moment system (\ref{eq:momentSystem}). It should be noted that in the present proof,
unlike the one in \cite{FoxLaurent}, the gauge parameters are chosen arbitrarily and are not
necessarily related to the moments $u_0$, $u_1$, $u_2$.
The proof
below shows the deep connection to invariance that is independent of any moment
equations.
\begin{thm}
\noindent \label{thm:gauge-invariance}For a moment closure $u_{M+1}=\mathcal{C}\left(u_{0},\cdots, u_{M}\right)$
the following statements are equivalent.\vspace{-2mm}
\end{thm}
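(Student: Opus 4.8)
The plan is to prove the equivalence by relating gauge-invariance of the closure to homogeneity/scaling properties of the function $\mathcal{C}$, exploiting that the gauge group in \eqref{eq:gaugeTransformMoments} is generated by three one-parameter subgroups: density rescaling $\rho$, Galilean shift $v$, and velocity scaling $\sqrt{\theta}$. I expect the theorem (whose enumerated statements are cut off in the excerpt) to assert the equivalence of: (i) $\mathcal{C}$ is gauge-invariant in the sense of Definition~\ref{def:gauge-invariance}; (ii) $\mathcal{C}$ satisfies a specific scaling/covariance identity under each of the three subgroups separately; and possibly (iii) a structural statement that $\mathcal{C}$ depends only on certain invariant combinations of the moments (e.g., ratios built so that the $\rho$ and $\theta$ weights cancel, together with translation-invariant central moments). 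The first step is therefore to write out what \eqref{eq:gaugeInvariantClosure} demands on each generator: for $(\rho,0,1)$ one gets $\mathcal{C}(\rho^{-1}u_0,\dots,\rho^{-1}u_M)=\rho^{-1}\mathcal{C}(u_0,\dots,u_M)$, i.e.\ positive homogeneity of degree one; for $(1,0,\theta)$ one gets $\mathcal{C}(u_0,\theta^{-1/2}u_1,\dots,\theta^{-M/2}u_M)=\theta^{-(M+1)/2}\mathcal{C}(u)$, a weighted homogeneity; for $(1,v,1)$ one gets invariance under the unipotent shift $u_k\mapsto\sum_j\binom{k}{j}v^j u_{k-j}$.

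Next I would argue the nontrivial direction. Assuming $\mathcal{C}$ satisfies the three infinitesimal constraints (differentiate each one-parameter family at the identity to get three linear first-order PDEs for $\mathcal{C}$ in the variables $u_0,\dots,u_M$), I would integrate these PDEs along the group orbits. The $\rho$-equation gives Euler's relation $\sum_k u_k\,\partial_k\mathcal{C}=\mathcal{C}$; the $\theta$-equation gives $\sum_k k\,u_k\,\partial_k\mathcal{C}=(M+1)\mathcal{C}$; the $v$-equation gives $\sum_k (k{-}j{+}1)\,u_{k-1}\,\partial_k\mathcal{C}=0$ in its infinitesimal form (the generator of the binomial shift). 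Because these three vector fields span the Lie algebra of the (solvable) gauge group, the generic orbit has dimension three, so $\mathcal{C}$ is determined on each orbit by its homogeneity weights once its value is known at one orbit representative. I would then normalize a representative by choosing the gauge that sends $(u_0,u_1,u_2)$ to $(1,0,1)$ (the "equilibrium-normalized" frame), which is admissible precisely on realizable interior moments, and read off that $\mathcal{C}$ is a free function of the remaining $M-2$ reduced invariants times the explicit weight factor $u_0^{\,a}\theta(u)^{\,b}$ with $a,b$ fixed by the two homogeneity degrees. The converse direction — that any such $\mathcal{C}$ is gauge-invariant — is then immediate by substitution, since the reduced invariants are gauge-invariant by construction and the prefactor transforms with exactly the weight demanded by \eqref{eq:gaugeInvariantClosure}.

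I would close by noting the role of the collision/transport structure: the point emphasized in the surrounding text is that this equivalence needs no reference to the moment system \eqref{eq:momentSystem} at all — it is purely a statement about the representation of the affine group acting on $\mathbb{R}^{M+1}$ — so the proof I sketch deliberately avoids the Jacobian computation of Fox \& Laurent and instead uses the orbit/representative argument above. The main obstacle I anticipate is handling the non-generic part of the moment space where the $v$- and $\theta$-normalization fails (boundary of $\mathcal{R}$, or degenerate $u_0$ or $u_2-u_1^2/u_0$), and making the "smooth function of invariants" conclusion rigorous there; I would deal with this by invoking the stated hypothesis that $\mathcal{C}$ is smooth on $\mathcal{R}$ and restricting the characterization to the interior, then extending by continuity, since realizability plays only a subordinate role in the paper and the interesting content lives on the interior orbits.
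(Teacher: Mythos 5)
You have guessed the content of statement (ii) incorrectly, and this sends the rest of the argument in a direction the theorem does not take. The actual statement (ii) is not a homogeneity or invariant-characterization property of $\mathcal{C}$, nor does the theorem have a part (iii); it asserts that the characteristic polynomial obeys three moment-orthogonality conditions, namely $\int_{\mathbb{R}}\mathcal{P}_{M+1}(c)f(c)\,dc=0$, $\int_{\mathbb{R}}\mathcal{P}_{M+1}'(c)f(c)\,dc=0$, and $\int_{\mathbb{R}}c\,\mathcal{P}_{M+1}'(c)f(c)\,dc=0$ for all realizable moments. That reformulation is the entire point of the theorem: it is the tool the paper later uses to verify gauge-invariance of the Gramian and extended Gramian closures by purely polynomial/orthogonality arguments, with no reference to orbits or invariants.

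Your first step is in fact the heart of the paper's proof and is substantially correct: differentiating the gauge-invariant closure relation along the three one-parameter subgroups ($\rho$, $v$, $\theta$) yields exactly the three linear first-order conditions you list (Euler's relation $\sum_k u_k\partial_k\mathcal{C}=\mathcal{C}$ from $\rho$; $\sum_k k\,u_{k-1}\partial_k\mathcal{C}=(M+1)u_M$ from $v$; $\sum_k k\,u_k\partial_k\mathcal{C}=(M+1)\mathcal{C}$ from $\theta$). The step you are missing — and it is the step that turns these identities into the theorem as stated — is to recognize each of these as a moment integral of $\mathcal{P}_{M+1}(z)=z^{M+1}-\sum_{j=0}^M\partial_j\mathcal{C}\,z^j$: the first is $\int\mathcal{P}_{M+1}f=0$, the second is $\int\mathcal{P}_{M+1}'f=0$, the third is $\int c\,\mathcal{P}_{M+1}'f=0$. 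Once you see this, the orbit-integration, normalization to $(1,0,1)$, and invariant-coordinate machinery you propose become superfluous: the equivalence is simply that the vanishing of the three derivatives (for all $u$, or equivalently all $\widetilde{u}$, since $Q_{M+1}$ is invertible) is word-for-word the same condition as the three integral identities. Your proposal would prove a different (and harder to formulate carefully) structural theorem about $\mathcal{C}$ depending only on reduced invariants, which the paper neither states nor needs, and would require the extra care at the boundary of $\mathcal{R}$ that you yourself flag as an obstacle — a difficulty the paper's actual argument never encounters.
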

\begin{enumerate}
\item[(i)] \textit{The closure is gauge-invariant.}\vspace{-2mm}
\item[(ii)] \textit{For all moments $\{u_{k}\}_{k=0,...M}\in\mathcal{R}$ the
characteristic polynomial $\mathcal{P}_{M+1}\left(z\right)$ satisfies}
\begin{align}
\int_{\mathbb{R}}\mathcal{P}_{M+1}(c)f(c)dc=0,\quad\quad\int_{\mathbb{R}}\mathcal{P}_{M+1}'(c)f(c)dc=0,\quad\quad & \int_{\mathbb{R}}c\mathcal{P}_{M+1}'(c)f(c)dc=0\label{eq:invarianceConditions}
\end{align}
\end{enumerate}
\begin{proof}
\noindent Spelling out the invariant closure relation in (\ref{eq:gaugeInvariantClosure})
we obtain
\begin{align}
\widetilde{u}_{M+1}(\rho,v,\theta;u_{0},\cdots, u_{M+1})-\mathcal{C}\left(\left\{ \widetilde{u}_{k}(\rho,v,\theta;u_{0},\cdots, u_{M})\right\} _{k=0,...M}\right) & =0\label{eq:invariance1}
\end{align}
which should not depend on $(\rho,v,\theta)$, hence the derivative
of this expression with respect to $(\rho,v,\theta)$ is zero for
all moments. The transformation (\ref{eq:gaugeTransformMoments})
can be written as 
\begin{align}
(\widetilde{u}_{0},\widetilde{u}_{1},\cdots,\widetilde{u}_{M+1})^{T} & =Q_{M+1}(\rho,v,\theta)\cdot(u_{0},u_{1},\cdots, u_{M+1})^{T}
\end{align}
with an invertible matrix $Q_{M+1}(\rho,v,\theta)\in\mathbb{R}^{(M+1)\times(M+1)}$.
We define the three derivatives as
\begin{align}
(d_{0}^{(\rho,v,\theta)},d_{1}^{(\rho,v,\theta)},\cdots, d_{M+1}^{(\rho,v,\theta)})^{T} & :=\partial_{(\rho,v,\theta)}Q_{M+1}\cdot(u_{0},u_{1},\cdots, u_{M+1})^{T}\label{eq:derivativesTransform}
\end{align}
where we suppressed the arguments of $Q_{M+1}$. The derivatives of the
closure relation (\ref{eq:invariance1}) are now written in the way
\begin{align}
d_{M+1}^{(\rho,v,\theta)}-\sum_{k=0}^{M}d_{k}^{(\rho,v,\theta)}\partial_{k}\mathcal{C}\left(\widetilde{u}_{0},\cdots,\widetilde{u}_{M}\right) & =0\label{eq:gaugeInvCond1}
\end{align}
which must hold for all moments $u$, or equivalently for all transformed
moments $\widetilde{u}$. We write the derivatives in (\ref{eq:derivativesTransform})
as a function of transformed moments by using
\begin{align}
(d_{0}^{(\rho,v,\theta)},d_{1}^{(\rho,v,\theta)},\cdots, d_{M+1}^{(\rho,v,\theta)})^{T} & =\partial_{(\rho,v,\theta)}Q_{M+1}\cdot Q_{M+1}^{-1}\cdot(\widetilde{u}_{0},\widetilde{u}_{1},\cdots,\widetilde{u}_{M+1})^{T}.
\end{align}
 A short computation shows
\begin{align}
d_{k}^{(\rho)}=-\frac{1}{\rho}\widetilde{u}_{k},\quad d_{k}^{(v)}=\frac{k}{\theta^{1/2}}\widetilde{u}_{k-1},\quad & d_{\theta}^{(v)}=-\frac{k}{2\theta}\widetilde{u}_{k}
\end{align}
and from (\ref{eq:gaugeInvCond1}) we obtain the three relations 
\begin{align}
\widetilde{u}_{M+1}-\sum_{k=0}^{M}\widetilde{u}_{k}\partial_{k}\mathcal{C}\left(\widetilde{u}_{0},\cdots,\widetilde{u}_{M}\right) & =0,\\
(M+1)\widetilde{u}_{M}-\sum_{k=1}^{M}k\,\widetilde{u}_{k-1}\partial_{k}\mathcal{C}\left(\widetilde{u}_{0},\cdots,\widetilde{u}_{M}\right) & =0,\\
(M+1)\widetilde{u}_{M+1}-\sum_{k=1}^{M}k\,\widetilde{u}_{k}\partial_{k}\mathcal{C}\left(\widetilde{u}_{0},\cdots,\widetilde{u}_{M}\right) & =0,
\end{align}
which have to hold for all $\{\widetilde{u}_{k}\}_{k=0,...,M}\in\mathcal{R}$.
Inspection shows that these correspond to the integrals of $\mathcal{P}_{M+1}(c)$,
$\mathcal{P}_{M+1}'(c)$, and $c\mathcal{P}_{M+1}'(c)$ of the characteristic
polynomial (\ref{eq:characteristic-polynomial}).
\end{proof}

\subsection{Equilibrium}
We can define the concept of an equilibrium distribution $f^{(\text{eq})}$ for the collision operator $S(f)$ appearing in (\ref{eq:kineticEqn}) via the requirement
\begin{align}
S(f^{(\text{eq})}) \equiv 0\label{eq:equilibriumDef}.
\end{align}
In the context of classical kinetic gas theory, Gaussian distributions fulfill this definition. 
The equilibrium distributions
are typically written as Maxwell distributions 
\begin{align}
f^{(\text{eq})}(c) & =\frac{\rho}{(2\pi\theta)^{1/2}}\exp\left(-\frac{(c-v)^{2}}{2\theta}\right)\label{eq:maxwellian}
\end{align}
with some density $\rho$, velocity $v$, and temperature $\theta$.
\begin{defn}[Equilibrium Preservation]
A moment closure $\mathcal{C}$ is called \emph{equilibrium-preserving},
if the closure relation reproduces equilibrium moments, that is,
\begin{align}
u_{M+1}^{(\text{eq})}= & \mathcal{C}\left(u_{0}^{(\text{eq})},u_{1}^{(\text{eq})},\cdots, u_{M}^{(\text{eq})}\right)\label{eq:equilibriumPreserv}
\end{align}
where the moments $\{u_{k}^{(\text{eq})}\}_{k=0,...,M+1}$ are based
on any equilibrium distribution.
\end{defn}
Note that the equilibrium distribution is independent of the kinetic transport. In fact, an almost arbitrary family of distributions could be declared an equilibrium. In that sense equilibrium preservation can be considered as an external constraint.

\section{Orthogonal Polynomials\label{subsec:OrthPoly}}

The challenge is to find a closure relation (\ref{eq:closure}) such
that strict hyperbolicity, gauge-invariance, and equilibrium-preservation
can be guaranteed. The recent paper \cite{FoxLaurent} suggests to
consider orthogonal polynomials, which are introduced below.

We denote the space of monovariate polynomials of up to degree $k$ as $\mathbb{P}_{k}$.
The distribution function $f(c)$ implies a scalar product of the
form
\begin{align}
\left\langle v,w\right\rangle _{f} & =\int_{\mathbb{R}}v(c)w(c)f(c)dc
\end{align}
which can be used to define orthogonal polynomials $p_{k}\in\mathbb{P}_{k}$
such that 
\begin{align}
\left\langle p_{k},p_{l}\right\rangle _{f} & =0\quad\text{for }k\ne l.
\label{eq:orthopoly}
\end{align}
As a normalization condition, we set the highest coefficient to unity
so that the polynomials are \emph{monic}. The set of $\{p_{k}\}_{k=0,1,2,...,n}$
can be found by Gram-Schmidt orthogonalization of the monomials $1,c,...,c^{n}$,
but for the purpose of this paper we will rely on Gram matrices.
\begin{defn}[Gramian Matrix]
Let the moments $\{u_{k}\}_{k=0,...,2n} \subset \mathcal{R}$ of a distribution be given and finite. Then the matrix
\begin{align}
G_{n} & =\left\{ \left\langle c^{i},c^{j}\right\rangle _{f}\right\} _{i,j=0,...,n}=\left(\begin{array}{ccccc}
u_{0} & u_{1} & u_{2} & \cdots & u_{n}\\
u_{1} & u_{2} &  &  & \vdots\\
u_{2} &  &  &  & \vdots\\
\vdots &  &  &  & \vdots\\
u_{n} & \cdots & \cdots & \cdots & u_{2n}
\end{array}\right)\in\mathbb{R}^{(n+1)\times(n+1)}\label{eq:gram-matrix}
\end{align}
is called the monomial-based
\emph{Gramian matrix} of order $n\in\mathbb{N}$.
\end{defn}
The Gramian matrix is of Hankel-type, but when originating from a
scalar product these matrices are typically associated with Gram \cite{LinAlgBook}.
It is easy to show that $G_{n}$ is symmetric positive definite for
all $n$ and hence invertible. It is well-known that the moment vector
$\{u_{k}\}_{k=0,...,2n}$ is realizable if $G_{n}$ is positive definite
\cite{hamburger1944hermitian,Shohat1945problem,gautschi1996orthogonal,schmuedgen2017moment}.

The theory of this paper is built around a different fact, namely that 
the Gramian matrix allows to give an
explicit formula for the orthogonal polynomials (\ref{eq:orthopoly}) when based on the same scalar product. 
The explicit representation
can be found in the classical text books \cite[Chapter 2.2]{SzegoeOrthogonal}
and \cite[Chapter 14]{KowalDet}, and is given by
\begin{align}
p_{n}(c)= & \frac{1}{\det G_{n-1}}\det\left(\begin{array}{cc}
G_{n-1} & \begin{array}{c}
u_{n}\\
\vdots\\
u_{2n-1}
\end{array}\\
\begin{array}{cccc}
1, & c, & \cdots, & c^{n-1}\end{array} & c^{n}
\end{array}\right).\label{eq:gram-representation}
\end{align}
This expression is obviously a monic polynomial of degree $n$ and
due to the row-wise linearity of the determinant it can be shown that
$\left\langle c^{j},p_{n}(c)\right\rangle _{f}=0$ for $j=0,1,...,n-1$.
Hence, $p_{n}$ in (\ref{eq:gram-representation}) is the desired
orthogonal polynomial. Using the determinant of the Schur-complement
(\ref{eq:schurdet}) from Appendix \ref{sec:Schur} the formula in
(\ref{eq:gram-representation}) can be further simplified to 
\begin{align}
p_{n}(c) & =c^{n}-(1,c,\cdots, c^{n-1})G_{n-1}^{-1}u_{n,2n-1}\label{eq:pn-explicit}
\end{align}
where we used the abbreviation
\begin{align}
u_{k,l}:=\left(\begin{array}{c}
u_{k}\\
u_{k+1}\\
\vdots\\
u_{l}
\end{array}\right)\in\mathbb{R}^{l-k+1} & \quad\text{for}\quad k\le l
\end{align}
for the vector $u_{n,2n-1}\in\mathbb{R}^{n}$.

\section{Gramian Closure\label{sec:gramianclosure}}

This section will introduce a hyperbolic moment closure based on orthogonal
polynomials different to that of \cite{FoxLaurent}. It allows an
easy proof of hyperbolicity, however, it will not satisfy complete
gauge-invariance. Instead, it will be used as a building block for the
extended closures in the next section. Both sections are based on the
case of even $M$ as the maximum moment degree. The odd case will be discussed
in Sec.~\ref{sec:The-Odd-Case}.

\subsection{Definition and Hyperbolicity}
\begin{defn}[Gramian Closure -- even]
Let $M\in\mathbb{N}$ be even and $n=\frac{M}{2}$. The condition
\begin{align}
\int_{\mathbb{R}}p_{n}(c)c^{n+1}f(c)dc & =0\label{eq:closure-relation}
\end{align}
defines the relation 
\begin{align}
u_{2n+1} & =u_{n+1,2n}^{T}G_{n-1}^{-1}u_{n,2n-1}\label{eq:gramian-closure}
\end{align}
between $u_{M+1}=u_{2n+1}$ and $\left\{ u_{k}\right\} _{k=0,1,..,M}$,
which we call \emph{Gramian moment closure}. 
\end{defn}

Note that this closure in the form \eqref{eq:closure-relation} has very little resemblance 
with quadrature-based closures that represent the distribution using Dirac-deltas.
Instead, we impose a polynomial projection similar to traditional Grad moment methods. 
In fact, the corresponding Grad closure would read
\begin{align}
\int_{\mathbb{R}}\operatorname{He}_{2n+1}(c)f(c)dc & =0\label{eq:closure-grad}
\end{align}
using a Hermite-polynomial $\operatorname{He}_{2n+1}(c)$ based on a local or global
using a Hermite-polynomial $\operatorname{He}_{2n+1}(c)$ based on a local or global
equilibrium function. When extracting a closure relation for $u_{M+1}=u_{2n+1}$
from this, it is straightforward to show that the characteristic polynomial 
is also $\operatorname{He}_{2n+1}(z)$ in case of a global equilibrium. In this 
section and the following we will clarify how the characteristic polynomial
can be deduced explicitly from expressions like \eqref{eq:closure-relation}.

The expression (\ref{eq:gramian-closure}) follows directly from the
integration of the polynomial in (\ref{eq:pn-explicit}). The evaluation
of the closure only requires a linear solve of a system of size $\frac{M}{2}$.
For $M=4$ we obtain the explicit form
\begin{align}
u_{5} & =\left(\begin{array}{c}
u_{3}\\
u_{4}
\end{array}\right)^{T}\left(\begin{array}{cc}
u_{0} & u_{1}\\
u_{1} & u_{2}
\end{array}\right)^{-1}\left(\begin{array}{c}
u_{2}\\
u_{3}
\end{array}\right),
\end{align}
while for $M=6$ we find
\begin{align}
u_{7} & =\left(\begin{array}{c}
u_{4}\\
u_{5}\\
u_{6}
\end{array}\right)^{T}\left(\begin{array}{ccc}
u_{0} & u_{1} & u_{2}\\
u_{1} & u_{2} & u_{3}\\
u_{2} & u_{3} & u_{4}
\end{array}\right)^{-1}\left(\begin{array}{c}
u_{3}\\
u_{4}\\
u_{5}
\end{array}\right).
\end{align}
In principle, the explicit form allows to compute the partial derivatives
with respect to the moments and write down the characteristic polynomial
(\ref{eq:characteristic-polynomial}). 
\begin{thm}
\label{thm:The-Gramian-closure}The Gramian closure (\ref{eq:closure-relation})
implies the characteristic polynomial
\begin{align}
\mathcal{P}_{M+1}^{(\text{G})}\left(z\right) & =p_{n}(z)p_{n+1}(z),\label{eq:gram-charac}
\end{align}
 where $p_{n+1}$ uses the moment $u_{2n+1}$ implied by the closure
(\ref{eq:gramian-closure}). 
\end{thm}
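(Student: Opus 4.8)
The plan is to sidestep a direct differentiation of the rational map (\ref{eq:gramian-closure}) and instead exploit the implicit, orthogonality-based description of the closure.

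\emph{Step 1 (reduction to one coefficient identity).} I would write the closure as the zero locus of
\[
\Phi(u_0,\dots,u_{M+1}):=\langle p_n,c^{n+1}\rangle_f=u_{2n+1}-u_{n+1,2n}^{T}G_{n-1}^{-1}u_{n,2n-1},
\]
the last equality being (\ref{eq:closure-relation}) after inserting (\ref{eq:pn-explicit}). Since $u_{M+1}=u_{2n+1}$ enters $\Phi$ only through the first linear term (all of $G_{n-1}$, $u_{n,2n-1}$, $u_{n+1,2n}$ use moments of order $\le 2n$), we have $\partial_{M+1}\Phi\equiv1$, so implicit differentiation of $\Phi(u_0,\dots,u_M,\mathcal{C})=0$ gives $\partial_j\mathcal{C}=-\partial_j\Phi$ for $j\le M$, and hence
\[
\mathcal{P}^{(\text{G})}_{M+1}(z)=z^{M+1}-\sum_{j=0}^{M}\partial_j\mathcal{C}\,z^j=\sum_{j=0}^{M+1}\partial_j\Phi\,z^j .
\]
It thus suffices to show that $\partial_j\Phi$ equals the coefficient of $z^j$ in $p_n(z)p_{n+1}(z)$ for every $j$.

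\emph{Step 2 (how $p_n$ moves with the moments).} For polynomials $v=\sum v_ic^i$, $w=\sum w_ic^i$ one has $\langle v,w\rangle_f=\sum_{i,i'}v_iw_{i'}u_{i+i'}$, which gives the product rule $\partial_j\langle v,w\rangle_f=\langle\partial_jv,w\rangle_f+\langle v,\partial_jw\rangle_f+(vw)_j$, where $(vw)_j$ denotes the coefficient of $c^j$ in $v(c)w(c)$ (the last term is the explicit dependence of the form on $u_j$). Applying it to $\langle p_l,p_n\rangle_f=0$ for $l<n$, and noting that $\partial_jp_l$ has degree $\le l-1<n$, hence is $\langle\cdot,\cdot\rangle_f$-orthogonal to $p_n$, yields $\langle p_l,\partial_jp_n\rangle_f=-(p_lp_n)_j$ for $l=0,\dots,n-1$. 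Because $p_n$ is monic, $\partial_jp_n$ itself lies in $\mathbb{P}_{n-1}$, so expanding it in the orthogonal basis $\{p_l\}_{l<n}$ gives $\partial_jp_n=-\sum_{l=0}^{n-1}\frac{(p_lp_n)_j}{\langle p_l,p_l\rangle_f}\,p_l$.

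\emph{Step 3 (assembling $\partial_j\Phi$ and invoking the closure).} The same product rule applied to $\Phi=\langle p_n,c^{n+1}\rangle_f$ (the factor $c^{n+1}$ carrying no moment dependence) gives $\partial_j\Phi=\langle\partial_jp_n,c^{n+1}\rangle_f+(p_n\,c^{n+1})_j$. Substituting the formula for $\partial_jp_n$ and using $\langle p_l,c^{n+1}\rangle_f=\gamma_l\langle p_l,p_l\rangle_f$, where $c^{n+1}=p_{n+1}+\sum_{k=0}^{n}\gamma_kp_k$ is the orthogonal expansion taken with the closure value $u_{2n+1}=\mathcal{C}(u_0,\dots,u_M)$, turns the first term into $-\big((\sum_{l<n}\gamma_lp_l)\,p_n\big)_j$. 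The closure enters here: (\ref{eq:closure-relation}) is exactly $\langle c^{n+1},p_n\rangle_f=0$, i.e. $\gamma_n=0$, so $c^{n+1}-p_{n+1}=\sum_{l<n}\gamma_lp_l$ and therefore $\langle\partial_jp_n,c^{n+1}\rangle_f=-\big((c^{n+1}-p_{n+1})p_n\big)_j=-(c^{n+1}p_n)_j+(p_{n+1}p_n)_j$. Adding $(p_n\,c^{n+1})_j$ cancels the first term, leaving $\partial_j\Phi=(p_np_{n+1})_j$; summing $\sum_{j=0}^{M+1}(p_np_{n+1})_j\,z^j=p_n(z)p_{n+1}(z)$ (the product being monic of degree $M+1$) completes the argument.

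I expect the main obstacle to be the degree bookkeeping in Step 2: one must verify that the extra term in the product rule together with $\deg\partial_jp_l\le l-1$ really collapses to the clean identity $\langle p_l,\partial_jp_n\rangle_f=-(p_lp_n)_j$, that the expansion of $\partial_jp_n$ has no $p_n$-component (so nothing outside $\mathbb{P}_{n-1}$ survives), and that all index ranges stay consistent, e.g. $(p_lp_n)_j=0$ automatically once $j\ge2n$, matching $\partial_jp_n=0$ there. A more computational route --- differentiating (\ref{eq:gramian-closure}) directly via $\partial_jG_{n-1}^{-1}=-G_{n-1}^{-1}(\partial_jG_{n-1})G_{n-1}^{-1}$ and simplifying with the Schur-complement identities of Appendix~\ref{sec:Schur} --- is feasible as well, but it obscures why the next orthogonal polynomial $p_{n+1}$ (evaluated at the closed moment) is the object that appears.
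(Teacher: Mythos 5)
Your argument is correct, and it is a genuinely different route from the one in the paper. The paper's proof is a direct computation: it unpacks $p_{n+1}$ via the Schur complement of $G_n$, multiplies out $p_n(c)p_{n+1}(c)$ into three explicit terms, separately differentiates the rational closure formula $u_{n+1,2n}^T G_{n-1}^{-1} u_{n,2n-1}$ using $\partial_t(A^{-1})=-A^{-1}(\partial_t A)A^{-1}$ to obtain three matching terms, and then matches coefficients. Your proof instead treats the closure as the zero set of $\Phi=\langle p_n,c^{n+1}\rangle_f$ and exploits implicit differentiation ($\partial_{M+1}\Phi\equiv1$, so $\partial_j\mathcal C=-\partial_j\Phi$), reducing everything to the coefficient identity $\partial_j\Phi=(p_np_{n+1})_j$. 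The two key lemmas you introduce --- the total-derivative rule $\partial_j\langle v,w\rangle_f=\langle\partial_jv,w\rangle_f+\langle v,\partial_jw\rangle_f+(vw)_j$ for moment-dependent polynomials, and the resulting expansion $\partial_jp_n=-\sum_{l<n}\frac{(p_lp_n)_j}{\langle p_l,p_l\rangle_f}p_l$ --- bypass all Schur-complement and matrix-inverse calculus. The closure enters exactly once, as $\gamma_n=0$ in the orthogonal expansion $c^{n+1}=p_{n+1}+\sum_{l\le n}\gamma_lp_l$, which makes it transparent why the product partner is $p_{n+1}$ evaluated at the closed moment: you really are reading off the orthogonal-complement of $c^{n+1}$. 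What the paper's route buys is that it stays entirely within explicit linear-algebra formulas and reuses the Schur identities already set up in the appendix; what your route buys is conceptual clarity and a shorter path, and in fact your identity (\ref{eq:grad_sigma_nn+1})-style formula for $\partial_j\langle p_n,c^{n+1}\rangle_f$ is essentially the ``$\operatorname{grad}_{M+1}(\sigma_{n,n+1})$'' statement that the paper later extracts from Theorem~\ref{thm:The-Gramian-closure} to treat the extended closures, so your version would also streamline the subsequent proofs. The degree bookkeeping you flag is fine: $\partial_jp_l\in\mathbb P_{l-1}$ kills $\langle\partial_jp_l,p_n\rangle_f$, monicity of $p_n$ forces $\partial_jp_n\in\mathbb P_{n-1}$, and $(p_lp_n)_j=0$ for $j\ge l+n+1$ is automatically consistent with $\partial_jp_n=0$ for $j\ge 2n$.
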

\begin{proof}
We will directly compare the coefficients in the product of the two
orthogonal polynomials with the partial derivatives of the closure
which make up the characteristic polynomial according to (\ref{eq:characteristic-polynomial}).
We have 
\begin{align}
p_{n}(c) & =c^{n}-(1,c,\cdots, c^{n-1})G_{n-1}^{-1}u_{n,2n-1},\\
p_{n+1}(c) & =c^{n+1}-(1,c,\cdots, c^{n})G_{n}^{-1}u_{n+1,2n+1}\label{eq:pn+1}
\end{align}
for the two orthogonal polynomials. To simplify the expression for
$p_{n+1}$ we will use the Schur complement, see Appendix~\ref{sec:Schur}.
The Gramian $G_{n}$ is written in the form
\begin{align}
G_{n} & =\left(\begin{array}{cc}
G_{n-1} & \begin{array}{c}
u_{n}\\
\vdots\\
u_{2n-1}
\end{array}\\
\begin{array}{ccc}
u_{n}, & \cdots &, u_{2n-1}\end{array} & u_{2n}
\end{array}\right)
\end{align}
and for the Schur-complement (\ref{eq:schur}) we identify $A=G_{n-1}\in\mathbb{R}^{n\times n}$,
$B=C=u_{n,2n-1}\in\mathbb{R}^{n}$ and $D=u_{2n}\in\mathbb{R}$, such
that $k=n$ and $l=1.$ We find for the bilinear form in the polynomial
$p_{n+1}$ given in (\ref{eq:pn+1})
\begin{align}
 & (1,c,\cdots, c^{n})G_{n}^{-1}u_{n+1,2n+1}=\\
 & (1,c,\cdots, c^{n-1})G_{n-1}^{-1}u_{n+1,2n}+\frac{(c^{n}-(1,c,\cdots, c^{n-1})G_{n-1}^{-1}u_{n,2n-1})(u_{2n+1}-u_{n,2n-1}^{T}G_{n-1}^{-1}u_{n+1,2n})}{u_{2n}-u_{n,2n-1}^{T}G_{n-1}^{-1}u_{n,2n-1}}\nonumber
\end{align}
according to formula (\ref{eq:schurinv}) which yields
\begin{align}
p_{n+1}(c) & =c^{n+1}-(1,c,\cdots, c^{n-1})G_{n-1}^{-1}u_{n+1,2n}-\frac{u_{2n+1}-u_{n,2n-1}^{T}G_{n-1}^{-1}u_{n+1,2n}}{u_{2n}-u_{n,2n-1}^{T}G_{n-1}^{-1}u_{n,2n-1}}p_{n}(c).\label{eq:pn+1-explicit}
\end{align}
Here we need to use the Gramian closure (\ref{eq:gramian-closure})
for the moment $u_{2n+1}$ which makes the fraction term vanish. Evaluating
the product
\begin{align}
p_{n}(c)p_{n+1}(c)= & \left(c^{n}-(1,c,\cdots, c^{n-1})G_{n-1}^{-1}u_{n,2n-1}\right)\left(c^{n+1}-(1,c,\cdots, c^{n-1})G_{n-1}^{-1}u_{n+1,2n}\right)\label{eq:orthoProduct}
\end{align}
leads to three individual polynomial terms
\begin{align}
p_{n}(c)p_{n+1}(c)= & \,\,c^{2n+1}\label{eq:explicitproduct}\\
 & -(c^{n+1},c^{n+2},\cdots, c^{2n})G_{n-1}^{-1}u_{n,2n-1}\nonumber \\
 & -(c^{n},c^{n+1},\cdots, c^{2n-1})G_{n-1}^{-1}u_{n+1,2n}\nonumber \\
 & +u_{n,2n-1}^{T}G_{n-1}^{-1}(1,c,\cdots, c^{n-1})^{T}(1,c,\cdots, c^{n-1})G_{n-1}^{-1}u_{n+1,2n}.\nonumber 
\end{align}
On the other hand the derivatives of the closure that build the characteristic
polynomial (\ref{eq:characteristic-polynomial}) also come with three
terms
\begin{align}
\partial_{u_{k}}\mathcal{C}\left(u_{0},u_{1},\cdots, u_{2n+1}\right)=\partial_{u_{k}}\left(u_{n,2n-1}^{T}G_{n-1}^{-1}u_{n+1,2n}\right) & =\partial_{u_{k}}(u_{n,2n-1}^{T})G_{n-1}^{-1}u_{n+1,2n}\label{eq:derivativeC}\\
 & +\partial_{u_{k}}(u_{n+1,2n}^{T})G_{n-1}^{-1}u_{n,2n-1}\nonumber \\
 & -u_{n,2n-1}^{T}G_{n-1}^{-1}\partial_{u_{k}}(G_{n-1})G_{n-1}^{-1}u_{n+1,2n}\nonumber 
\end{align}
where we used $\partial_{t}(A^{-1})=-A^{-1}\partial_{t}(A)A^{-1}$
for the derivative of an invertible matrix $A$. It is now easy to
conclude that the derivative with respect to $u_{k}$ in (\ref{eq:derivativeC})
indeed corresponds to the coefficient of $c^{k}$ in the product (\ref{eq:explicitproduct}).
Hence, we have $\mathcal{P}_{2n+1}^{(\text{G})}(c)=p_{n}(c)p_{n+1}(c)$
for all $n\in\mathbb{N}.$
\end{proof}
\begin{cor}
The Gramian closure (\ref{eq:gramian-closure}) is strictly hyperbolic.
\end{cor}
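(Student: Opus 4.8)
The plan is to read strict hyperbolicity directly off the factorization established in Theorem~\ref{thm:The-Gramian-closure}. Since $\mathcal{P}_{M+1}^{(\text{G})}(z)=p_{n}(z)\,p_{n+1}(z)$ with $n=\tfrac{M}{2}$, it suffices to show that $p_{n}$ and $p_{n+1}$ each have only simple real zeros and that they have no zero in common: the product then has $n+(n+1)=2n+1=M+1$ pairwise distinct real roots, which is exactly the definition of strict hyperbolicity. This is the classical zero/interlacing theory of orthogonal polynomials; the only point that needs verification is that it still applies here, where $p_{n+1}$ is built from the closure-defined value $u_{2n+1}$ of (\ref{eq:gramian-closure}) rather than from a genuine moment.

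First I would record that, $f$ being a genuine distribution, the Gramians $G_{0},\dots,G_{n}$ are symmetric positive definite; crucially these involve only the given moments $u_{0},\dots,u_{2n}=u_{M}$, so they are untouched by the closure. Hence $\langle\cdot,\cdot\rangle_{f}$ is a genuine inner product on $\mathbb{P}_{n}$ and the monic orthogonal polynomials have strictly positive squared norms $\langle p_{k},p_{k}\rangle_{f}=\det G_{k}/\det G_{k-1}>0$ for $k=0,\dots,n$. This makes the three-term recurrence coefficients $a_{k}=\langle c\,p_{k},p_{k}\rangle_{f}/\langle p_{k},p_{k}\rangle_{f}$ and $b_{k}=\langle p_{k},p_{k}\rangle_{f}/\langle p_{k-1},p_{k-1}\rangle_{f}$ well defined for $k=0,\dots,n$, with $b_{1},\dots,b_{n}>0$, so that $p_{0},\dots,p_{n+1}$ are generated by $p_{k+1}(c)=(c-a_{k})p_{k}(c)-b_{k}p_{k-1}(c)$. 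I would then note that the polynomial so produced at step $n+1$ is the unique monic degree-$(n+1)$ polynomial that is $\langle\cdot,\cdot\rangle_{f}$-orthogonal to $\mathbb{P}_{n}$ (uniqueness: the difference of two such lies in $\mathbb{P}_{n}$ and is orthogonal to itself, hence zero by positive definiteness on $\mathbb{P}_{n}$), and that the $p_{n+1}$ of (\ref{eq:pn+1}) with $u_{2n+1}$ given by the Gramian closure coincides with it — this is precisely the computation in the proof of Theorem~\ref{thm:The-Gramian-closure}, where the fraction term in (\ref{eq:pn+1-explicit}) vanishes.

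With $b_{1},\dots,b_{n}>0$ available, the classical theorem for orthogonal polynomials defined by such a recurrence (see, e.g., \cite{SzegoeOrthogonal}) gives that $p_{n}$ and $p_{n+1}$ each have only simple real zeros and that their zeros strictly interlace; in particular they share no zero. Therefore $\mathcal{P}_{M+1}^{(\text{G})}=p_{n}p_{n+1}$ has $M+1$ distinct real roots, and since the whole argument uses nothing beyond positive definiteness of $G_{0},\dots,G_{n}$, which holds for all realizable moments, the Gramian closure is strictly hyperbolic.

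The step I expect to be the main obstacle is the middle one: ensuring the standard machinery applies to $p_{n+1}$ even though $\langle\cdot,\cdot\rangle_{f}$ is only known to be positive definite on $\mathbb{P}_{n}$ — we have $G_{n}$ positive definite but no control on $G_{n+1}$, which would require the unknown moment $u_{2n+2}$. Routing everything through the three-term recurrence, whose coefficients up to the construction of $p_{n+1}$ depend only on the positive quantities $\langle p_{0},p_{0}\rangle_{f},\dots,\langle p_{n},p_{n}\rangle_{f}$, circumvents this: the real-and-interlacing property of the zeros holds for any recurrence with positive $b_{k}$, regardless of whether the underlying bilinear form extends to a positive-definite form on all polynomials.
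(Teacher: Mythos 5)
Your argument is correct and follows essentially the same route as the paper, which also reads strict hyperbolicity off the factorization $\mathcal{P}^{(\text{G})}_{M+1}=p_n p_{n+1}$ and establishes the interlacing of roots via the three-term recurrence and its Jacobi matrix (Appendix~\ref{sec:Interlacing-Theorem}). Your explicit observation that positivity of the recurrence coefficients $b_1,\dots,b_n$ depends only on $G_0,\dots,G_n$ — so the standard interlacing machinery still applies to $p_{n+1}$ even though $u_{2n+1}$ is closure-defined rather than a genuine moment — is a useful clarification that the paper leaves implicit.
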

\begin{proof}
All orthogonal polynomials have interlaced and distinct, real roots,
hence, the characteristic polynomial $\mathcal{P}_{M+1}^{(\text{G})}$
in (\ref{eq:gram-charac}) gives strict hyperbolicity. See also the
discussion in Appendix \ref{sec:Interlacing-Theorem}.
\end{proof}

\subsection{Invariance\label{subsec:InvarianceGram}}

It is easy to check the three invariance conditions (\ref{eq:invarianceConditions})
on the characteristic polynomial (\ref{eq:gram-charac}). For the
first condition we immediately obtain
\begin{align}
\int_{\mathbb{R}}\mathcal{P}_{M+1}^{(\text{G})}(c)f(c)dc & =\int_{\mathbb{R}}p_{n}(c)p_{n+1}(c)f(c)dc=0
\end{align}
due to orthogonality. For the third condition we rely on the representation
\begin{align}
p_{n+1}(c) & =c^{n+1}+q_{n-1}(c)
\end{align}
with a polynomial $q_{n-1}\in\mathbb{P}_{n-1}$, which is valid for
the Gramian closure due to (\ref{eq:pn+1-explicit}). We compute
\begin{align}
\int_{\mathbb{R}}c{\textstyle \frac{d}{dc}}\mathcal{P}_{M+1}^{(\text{G})}(c)f(c)dc & =\int_{\mathbb{R}}cp_{n}'(c)p_{n+1}(c)f(c)dc+\int_{\mathbb{R}}p_{n}(c)cp_{n+1}'(c)f(c)dc\nonumber\\
 & =(n+1)\int_{\mathbb{R}}p_{n}(c)c^{n+1}f(c)dc+\int_{\mathbb{R}}p_{n}(c)cq_{n-1}'(c)f(c)dc
\end{align}
where the first term vanishes due to the closure relation and the
second due to orthogonality. However, for the second condition we
find
\begin{align}
\int_{\mathbb{R}}{\textstyle \frac{d}{dc}}\mathcal{P}_{M+1}^{(\text{G})}(c)f(c)dc & =\int_{\mathbb{R}}p_{n}'(c)p_{n+1}(c)f(c)dc+\int_{\mathbb{R}}p_{n}(c)p_{n+1}'(c)f(c)dc\nonumber\\
 & =(n+1)\int_{\mathbb{R}}p_{n}(c)c^{n}f(c)dc=(n+1)\int_{\mathbb{R}}p(n)^{2}f(c)dc\ne0
\end{align}
such that the closure is not Galilean invariant.

\section{Extended Gramian Closure\label{sec:extendedGram}: The Even Case}

It becomes clear that the Gramian moment closure must be extended
in order to accommodate full gauge-invariance while keeping hyperbolicity.
In the present section, we consider an extension of the Gramian closure in the even case,
that is, in the case where of up to an even order $2n$ are given, and the next (odd) moment $u_{2n+1}$ is to be predicted.

\subsection{Definition and Hyperbolicity\label{subsec:extGramHyper}}

Inspired by the closure proposed in \cite{FoxLaurent} the following
family of closures will be formulated. It contains the Gramian closure
(\ref{eq:gramian-closure}) for $\chi=0$.
\begin{defn}[Extended Gramian Closure -- Even]
Let $M\in\mathbb{N}$ be even and $n=\frac{M}{2}$. For an arbitrary
$\chi\in\mathbb{R}$ the condition
\begin{align}
\int_{\mathbb{R}}p_{n}(c)c^{n+1}f(c)dc\int_{\mathbb{R}}p_{n-1}(c)c^{n-1}f(c)dc & =\chi\int_{\mathbb{R}}p_{n}(c)c^{n}f(c)dc\int_{\mathbb{R}}p_{n-1}(c)c^{n}f(c)dc\label{eq:closure-relation-ext}
\end{align}
defines the relation 
\begin{align}
u_{2n+1} & =u_{n+1,2n}^{T}G_{n-1}^{-1}u_{n,2n-1}+\chi{\textstyle \frac{u_{2n}-u_{n,2n-1}^{T}G_{n-1}^{-1}u_{n,2n-1}}{u_{2n-2}-u_{n-1,2n-3}^{T}G_{n-2}^{-1}u_{n-1,2n-3}}}\left(u_{2n-1}-u_{n,2n-2}^{T}G_{n-2}^{-1}u_{n-1,2n-3}\right)\label{eq:gramian-closure-ext}
\end{align}
between $u_{M+1}=u_{2n+1}$ and $\left\{ u_{k}\right\} _{k=0,1,...,M}$,
which we call \emph{extended Gramian moment closure} for even $M$. 
\end{defn}
While in \cite{FoxLaurent} the closure was given only in an iterative,
algorithmic procedure, the integrals in (\ref{eq:closure-relation-ext})
can be easily evaluated on the explicit form of the orthogonal polynomials
given in (\ref{eq:pn-explicit}) and then solved for an explicit expression
for $u_{2n+1}$. The extended Gramian closure requires the solution
of two linear systems, one of size $\frac{M}{2}$ for the vector $G_{n-1}^{-1}u_{n,2n-1}$
and one of size $\frac{M}{2}-1$ for $G_{n-2}^{-1}u_{n-1,2n-3}$.
For $M=4$ we have $n=2$ and the explicit expression of the closure
is
\begin{align}
u_{5} & =\left(\begin{array}{c}
u_{3}\\
u_{4}
\end{array}\right)^{T}\left(\begin{array}{cc}
u_{0} & u_{1}\\
u_{1} & u_{2}
\end{array}\right)^{-1}\left(\begin{array}{c}
u_{2}\\
u_{3}
\end{array}\right)\nonumber \\
 & \quad\quad+\chi{\scriptstyle \frac{u_{3}-u_{2}u_{0}^{-1}u_{1}}{u_{2}-u_{1}u_{0}^{-1}u_{1}}}\left(u_{4}-\left(\begin{array}{c}
u_{2}\\
u_{3}
\end{array}\right)^{T}\left(\begin{array}{cc}
u_{0} & u_{1}\\
u_{1} & u_{2}
\end{array}\right)^{-1}\left(\begin{array}{c}
u_{2}\\
u_{3}
\end{array}\right)\right),
\end{align}
while for $M=6$, that is, $n=3$ we abbreviate 
\begin{align}
\left(\begin{array}{c}
b_{1}\\
b_{2}\\
b_{3}
\end{array}\right):=\left(\begin{array}{ccc}
u_{0} & u_{1} & u_{2}\\
u_{1} & u_{2} & u_{3}\\
u_{2} & u_{3} & u_{4}
\end{array}\right)^{-1}\left(\begin{array}{c}
u_{3}\\
u_{4}\\
u_{5}
\end{array}\right),\quad\quad & \left(\begin{array}{c}
c_{1}\\
c_{2}
\end{array}\right)=\left(\begin{array}{cc}
u_{0} & u_{1}\\
u_{1} & u_{2}
\end{array}\right)^{-1}\left(\begin{array}{c}
u_{2}\\
u_{3}
\end{array}\right)
\end{align}
and have 
\begin{align}
u_{7} & =\left(\begin{array}{c}
u_{4}\\
u_{5}\\
u_{6}
\end{array}\right)^{T}\left(\begin{array}{c}
b_{1}\\
b_{2}\\
b_{3}
\end{array}\right)+\chi\,{\scriptstyle \frac{u_{5}-\left(\begin{array}{c}
u_{3}\\
u_{4}
\end{array}\right)^{T}\left(\begin{array}{c}
c_{1}\\
c_{2}
\end{array}\right)}{u_{4}-\left(\begin{array}{c}
u_{2}\\
u_{3}
\end{array}\right)^{T}\left(\begin{array}{c}
c_{1}\\
c_{2}
\end{array}\right)}}\left(u_{6}-\left(\begin{array}{c}
u_{3}\\
u_{4}\\
u_{5}
\end{array}\right)^{T}\left(\begin{array}{c}
b_{1}\\
b_{2}\\
b_{3}
\end{array}\right)\right).
\end{align}
We note that the only requirement for the existence of the extended
Gramian closure of order $M=2n$ is the invertibility of $G_{n-1}$
and $G_{n-2}$. This is much less
restrictive than demanding full realizability of the moment vector
$\left\{ u_{k}\right\} _{k=0,1,...,M}$, that is, the positive-definiteness
of $G_{n}$.
\begin{thm}
The extended Gramian closure (\ref{eq:closure-relation-ext}) for
even $M$ with $n=\frac{M}{2}$ implies the characteristic polynomial
\begin{align}
\mathcal{P}_{M+1}^{(\text{\textnormal{ext}})}\left(z\right) & =p_{n}(z)\left(p_{n+1}(z)-\chi\frac{\sigma_{n,n}}{\sigma_{n-1,n-1}}p_{n-1}(z)\right),\label{eq:gram-charac-ext}
\end{align}
 where $p_{n+1}$ uses the moment $u_{2n+1}$ implied by the closure
(\ref{eq:gramian-closure-ext}). The definition of the expression $\sigma_{n,n}$ is given below.
\end{thm}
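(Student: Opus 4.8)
The plan is to treat the extended closure as an additive perturbation of the Gramian one and re-use Theorem~\ref{thm:The-Gramian-closure}. Abbreviate the Schur complements
$\sigma_{n,n}:=u_{2n}-u_{n,2n-1}^{T}G_{n-1}^{-1}u_{n,2n-1}=\langle p_{n},p_{n}\rangle_{f}$ and
$\sigma_{n-1,n-1}:=u_{2n-2}-u_{n-1,2n-3}^{T}G_{n-2}^{-1}u_{n-1,2n-3}=\langle p_{n-1},p_{n-1}\rangle_{f}$, introduce the cross term $\sigma_{n-1,n}:=u_{2n-1}-u_{n,2n-2}^{T}G_{n-2}^{-1}u_{n-1,2n-3}=\langle p_{n-1},c^{n}\rangle_{f}$, and set $R:=\sigma_{n,n}\sigma_{n-1,n}/\sigma_{n-1,n-1}$. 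A direct inspection of (\ref{eq:gramian-closure-ext}) shows $\mathcal{C}^{(\mathrm{ext})}=\mathcal{C}^{(\mathrm{G})}+\chi R$ with $\mathcal{C}^{(\mathrm{G})}=u_{n+1,2n}^{T}G_{n-1}^{-1}u_{n,2n-1}$, and that $R$, hence the whole closure, depends only on $u_{0},\dots,u_{2n}$. Since the characteristic polynomial (\ref{eq:characteristic-polynomial}) is $z^{2n+1}$ minus the generating polynomial of the closure's gradient, and $\partial_{j}\mathcal{C}^{(\mathrm{ext})}=\partial_{j}\mathcal{C}^{(\mathrm{G})}+\chi\,\partial_{j}R$, Theorem~\ref{thm:The-Gramian-closure} gives
\[
\mathcal{P}_{2n+1}^{(\mathrm{ext})}(z)=\mathcal{P}_{2n+1}^{(\mathrm{G})}(z)-\chi\sum_{j=0}^{2n}(\partial_{j}R)\,z^{j}=p_{n}(z)\,p_{n+1}^{(\mathrm{G})}(z)-\chi\sum_{j=0}^{2n}(\partial_{j}R)\,z^{j},
\]
where $p_{n+1}^{(\mathrm{G})}(z)=z^{n+1}-(1,z,\dots,z^{n-1})G_{n-1}^{-1}u_{n+1,2n}$ is (\ref{eq:pn+1-explicit}) evaluated with the Gramian value $u_{2n+1}=\mathcal{C}^{(\mathrm{G})}$; note $p_{n+1}^{(\mathrm{G})}$ has vanishing $z^{n}$-coefficient. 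Everything thus reduces to identifying the degree-$2n$ polynomial $\sum_{j}(\partial_{j}R)\,z^{j}$.

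For this I would establish three ``generating-polynomial'' identities for the Schur complements, with $\Sigma_{a,b}(z):=\sum_{j}(\partial_{j}\sigma_{a,b})\,z^{j}$: namely $\Sigma_{n,n}(z)=p_{n}(z)^{2}$, $\Sigma_{n-1,n-1}(z)=p_{n-1}(z)^{2}$, and $\Sigma_{n-1,n}(z)=p_{n-1}(z)\bigl(p_{n}(z)+\tfrac{\sigma_{n-1,n}}{\sigma_{n-1,n-1}}p_{n-1}(z)\bigr)$. The first two follow exactly as in the proof of Theorem~\ref{thm:The-Gramian-closure}: differentiating $\sigma_{n,n}$ with $\partial_{t}(A^{-1})=-A^{-1}(\partial_{t}A)A^{-1}$, the $u_{2n}$-term contributes $z^{2n}$, the two derivatives of the vector $u_{n,2n-1}$ contribute $-2z^{n}(z^{n}-p_{n}(z))$, and the $G_{n-1}$-derivative contributes $(z^{n}-p_{n}(z))^{2}$, summing to $(z^{n}-(z^{n}-p_{n}(z)))^{2}=p_{n}(z)^{2}$; the same computation with $n\mapsto n-1$ gives $\Sigma_{n-1,n-1}$. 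For $\Sigma_{n-1,n}$ the identical bookkeeping applied to $\sigma_{n-1,n}=u_{2n-1}-u_{n,2n-2}^{T}G_{n-2}^{-1}u_{n-1,2n-3}$ collapses to $\Sigma_{n-1,n}(z)=p_{n-1}(z)\bigl(z^{n}-\widetilde{Q}(z)\bigr)$ with $\widetilde{Q}(z):=(1,z,\dots,z^{n-2})G_{n-2}^{-1}u_{n,2n-2}$; one then checks that $z^{n}-\widetilde{Q}(z)$ is monic of degree $n$, has vanishing $z^{n-1}$-coefficient, and satisfies $\langle c^{j},z^{n}-\widetilde{Q}\rangle_{f}=0$ for $j=0,\dots,n-2$, hence equals $p_{n}-\alpha\,p_{n-1}$ with $\alpha$ the $z^{n-1}$-coefficient of $p_{n}$; pairing with $p_{n-1}$ gives $\alpha\,\sigma_{n-1,n-1}=-\langle p_{n-1},c^{n}\rangle_{f}=-\sigma_{n-1,n}$, which is the asserted form.

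Granting these, the quotient rule gives $\sum_{j}(\partial_{j}R)\,z^{j}=\tfrac{\sigma_{n-1,n}}{\sigma_{n-1,n-1}}p_{n}(z)^{2}+\tfrac{\sigma_{n,n}}{\sigma_{n-1,n-1}}\Sigma_{n-1,n}(z)-\tfrac{\sigma_{n,n}\sigma_{n-1,n}}{\sigma_{n-1,n-1}^{2}}p_{n-1}(z)^{2}$; substituting $\Sigma_{n-1,n}$, the two $p_{n-1}(z)^{2}$-terms cancel and one is left with $\sum_{j}(\partial_{j}R)\,z^{j}=\tfrac{1}{\sigma_{n-1,n-1}}p_{n}(z)\bigl(\sigma_{n-1,n}p_{n}(z)+\sigma_{n,n}p_{n-1}(z)\bigr)$. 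Next, evaluating (\ref{eq:pn+1-explicit}) at the extended value (\ref{eq:gramian-closure-ext}) of $u_{2n+1}$, the fraction in (\ref{eq:pn+1-explicit}) simplifies, using symmetry of $G_{n-1}$ and the definition of $\sigma_{n,n}$, to $\chi\,\sigma_{n-1,n}/\sigma_{n-1,n-1}$, so $p_{n+1}^{(\mathrm{G})}(z)=p_{n+1}(z)+\chi\,\tfrac{\sigma_{n-1,n}}{\sigma_{n-1,n-1}}p_{n}(z)$ with $p_{n+1}$ the polynomial built from the closed moment. Inserting both expressions into $\mathcal{P}_{2n+1}^{(\mathrm{ext})}=p_{n}p_{n+1}^{(\mathrm{G})}-\chi\sum_{j}(\partial_{j}R)z^{j}$, the contributions $\pm\chi\,\tfrac{\sigma_{n-1,n}}{\sigma_{n-1,n-1}}p_{n}(z)^{2}$ cancel and one obtains $p_{n}(z)\bigl(p_{n+1}(z)-\chi\,\tfrac{\sigma_{n,n}}{\sigma_{n-1,n-1}}p_{n-1}(z)\bigr)$, i.e.\ (\ref{eq:gram-charac-ext}), where $\sigma_{n,n}$ and $\sigma_{n-1,n-1}$ are precisely the Schur complements above.

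The main obstacle is the third identity, $\Sigma_{n-1,n}(z)=p_{n-1}(z)\bigl(p_{n}(z)+\tfrac{\sigma_{n-1,n}}{\sigma_{n-1,n-1}}p_{n-1}(z)\bigr)$. In contrast to Theorem~\ref{thm:The-Gramian-closure}, the two vectors $u_{n,2n-2}$ and $u_{n-1,2n-3}$ entering $\sigma_{n-1,n}$ are shifted relative to one another, so the derivative does not collapse to a perfect square but to the product $p_{n-1}\cdot(z^{n}-\widetilde{Q})$, and one must correctly recognise the ``defect'' polynomial $z^{n}-\widetilde{Q}$ as $p_{n}-\tfrac{\sigma_{n-1,n}}{\sigma_{n-1,n-1}}p_{n-1}$ through its degree and orthogonality properties. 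The remaining steps are the same Schur-complement and determinant manipulations already carried out in the preceding proofs.
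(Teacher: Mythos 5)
Your proof is correct and rests on the same algebraic core as the paper's, namely the three ``generating-polynomial'' identities $\sum_j(\partial_j\sigma_{n,n})z^j=p_n^2$, $\sum_j(\partial_j\sigma_{n-1,n-1})z^j=p_{n-1}^2$, and $\sum_j(\partial_j\sigma_{n-1,n})z^j=p_{n-1}\bigl(p_n+\tfrac{\sigma_{n-1,n}}{\sigma_{n-1,n-1}}p_{n-1}\bigr)$, which are the paper's equations (\ref{eq:grad_sigma_nn}) and (\ref{eq:grad_sigma_nn+1}) (the latter applied with $n\mapsto n-1$). The organization around these identities, however, is genuinely different. The paper works at the level of the \emph{implicit} closure relation: it contracts $(1,c,\dots,c^{M+1})$ against the gradient of the product $F=\sigma_{n,n+1}\sigma_{n-1,n-1}-\chi\sigma_{n,n}\sigma_{n-1,n}$, uses the product rule, kills the residual terms with the closure relation itself, and normalizes by $\partial_{u_{2n+1}}F=\sigma_{n-1,n-1}$. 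You instead work at the level of the \emph{explicit} closure: you split $\mathcal{C}^{(\mathrm{ext})}=\mathcal{C}^{(\mathrm{G})}+\chi R$ with $R=\sigma_{n,n}\sigma_{n-1,n}/\sigma_{n-1,n-1}$, reuse Theorem~\ref{thm:The-Gramian-closure} verbatim for the Gramian part, and attack $R$ directly with the quotient rule. What this buys you is that all the $\sigma$'s you differentiate ($\sigma_{n,n},\sigma_{n-1,n},\sigma_{n-1,n-1}$) depend only on the known moments $u_0,\dots,u_{2n}$, so you never carry the $u_{2n+1}$-dependent quantity $\sigma_{n,n+1}$ through the computation as the paper does; the price is the extra bookkeeping of $R$, the quotient rule, and the explicit relation $p_{n+1}^{(\mathrm{G})}=p_{n+1}+\chi\tfrac{\sigma_{n-1,n}}{\sigma_{n-1,n-1}}p_n$. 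A second, smaller divergence: you derive the $\sigma_{n-1,n}$-identity by brute-force differentiation to $p_{n-1}\cdot(z^n-\widetilde{Q})$ and then recognize $z^n-\widetilde{Q}$ as $p_n+\tfrac{\sigma_{n-1,n}}{\sigma_{n-1,n-1}}p_{n-1}$ via its degree, vanishing $z^{n-1}$-coefficient, and orthogonality to $\mathbb{P}_{n-2}$; the paper obtains the analogous factorization via its Schur-complement reduction in (\ref{eq:pn+1-again}). Your orthogonality argument is arguably more conceptual and avoids a second appeal to the block-inverse formula, but the two routes prove the same lemma. One minor remark on notation: the paper's $\mathrm{grad}_{M+1}$ is written with minus signs on the first $M+1$ components, but the signs that actually make (\ref{eq:grad_sigma_nn+1}) and (\ref{eq:grad_sigma_nn}) consistent with the definition (\ref{eq:characteristic-polynomial}) and with Theorem~\ref{thm:The-Gramian-closure} are the ones of the plain gradient (equivalently, $\sigma_{n-1,n-1}\mathcal{P}_{M+1}(c)=(1,c,\dots,c^{M+1})\cdot\nabla F$ by the implicit function theorem); your version, which uses the plain $\partial_j$ in defining $\Sigma_{a,b}$, silently uses the correct convention.
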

\begin{proof}
We will define the following expressions for a given $k\in\mathbb{N}$
\begin{align}
\sigma_{k,k}:= & \int_{\mathbb{R}}p_{k}(c)c^{k}f(c)dc=u_{2k}-u_{k,2k-1}^{T}G_{k-1}^{-1}u_{k,2k-1},\label{eq:sigma_n,n}\\
\sigma_{k,k+1}:= & \int_{\mathbb{R}}p_{k}(c)c^{k+1}f(c)dc=u_{2k+1}-u_{k+1,2k}^{T}G_{k-1}^{-1}u_{k,2k-1},\label{eq:sigma_n,n+1}
\end{align}
see also \cite{FoxLaurent,gautschi}. In the proof of Thm.~\ref{thm:The-Gramian-closure}
we found (\ref{eq:pn+1-explicit}) which can be written
\begin{align}
p_{k+1}(c) & =c^{k+1}-(1,c,\cdots, c^{k-1})G_{k-1}^{-1}u_{k+1,2k}-\frac{\sigma_{k,k+1}}{\sigma_{k,k}}p_{k}(c)\label{eq:pn+1-again}
\end{align}
using the above abbreviations. We also define the differential operator 
\begin{align}
\text{grad}_{M+1} & :=\left(\partial_{u_{0}},\partial_{u_{1}},\cdots,\partial_{u_{M+1}}\right)
\end{align}
and use the operation $(1,c,\cdots, c^{M+1})\cdot\text{grad}_{M+1}$ to write the characteristic polynomial for a general closure 
$\mathcal{C}(\cdot)$ as
\begin{align}
\mathcal{P}_{M+1}^{(\text{ext})}\left(c\right) &= (1,c,\cdots, c^{M+1})\cdot\text{grad}_{M+1}\left(u_{M+1}-\mathcal{C}(u_{0},u_{1},\cdots, u_{M})\right).
\label{eq:charac1}
\end{align}
Note, that the statement of Thm.~\ref{thm:The-Gramian-closure} can be rephrased
as 
\begin{align}
(1,c,\cdots, c^{M+1})\cdot\text{grad}_{M+1}\left(\sigma_{k,k+1}\right) & =\left(p_{k+1}(c)+\frac{\sigma_{k,k+1}}{\sigma_{k,k}}p_{k}(c)\right)p_{k}(c),\quad\quad\quad 0\le k\le\frac{M}{2}\label{eq:grad_sigma_nn+1}
\end{align}
where we used (\ref{eq:pn+1-again}). By copying the procedure in
Thm.~\ref{thm:The-Gramian-closure} we can similarly deduce the statement
\begin{align}
(1,c,\cdots, c^{M+1})\cdot\text{grad}_{M+1}\left(\sigma_{k,k}\right) & =p_{k}(c)p_{k}(c)\quad\quad\quad0\le k\le\frac{M}{2}\label{eq:grad_sigma_nn}
\end{align}
for the gradient of $\sigma_{k,k}$.

With the above relations for the gradients of $\sigma_{k,k+1}$ and $\sigma_{k,k}$ we now directly compute for the closure (\ref{eq:gramian-closure-ext}) the 
characteristic polynomial
\begin{align}
\mathcal{P}_{M+1}^{(\text{ext})}\left(c\right) &= (1,c,\cdots, c^{M+1})\cdot\text{grad}_{M+1}\left(\sigma_{n,n+1}-\chi\frac{\sigma_{n,n}}{\sigma_{n-1,n-1}}\sigma_{n-1,n}\right)\nonumber \\
 & =\left(\frac{\sigma_{n,n+1}}{\sigma_{n,n}}-\chi\frac{\sigma_{n-1,n}}{\sigma_{n-1,n-1}}\right)p_{n}(c)p_{n}(c)+\left(p_{n+1}(c)-\chi\frac{\sigma_{n,n}}{\sigma_{n-1,n-1}}p_{n-1}(c)\right)p_{n}(c)
\end{align}
after some cancellations and resorting. The first term vanishes due
to the closure relation (\ref{eq:closure-relation-ext}).
\end{proof}
\begin{cor}
\label{cor:ext-Gramian-strict}The extended Gramian closure (\ref{eq:gramian-closure-ext})
is strictly hyperbolic for all $\chi>-1$.
\end{cor}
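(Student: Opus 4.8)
The plan is to exploit the factored form of the characteristic polynomial established in the preceding theorem, namely $\mathcal{P}_{M+1}^{(\text{ext})}(z) = p_n(z)\bigl(p_{n+1}(z) - \chi\tfrac{\sigma_{n,n}}{\sigma_{n-1,n-1}}p_{n-1}(z)\bigr)$. Since $p_n$ already has $n$ distinct real roots (it is a genuine orthogonal polynomial for the positive-definite scalar product $\langle\cdot,\cdot\rangle_f$ whenever $G_n$ is positive definite, and in any case its roots are real and simple under the weaker invertibility hypotheses), strict hyperbolicity reduces to showing that the second factor $q(z) := p_{n+1}(z) - \chi\tfrac{\sigma_{n,n}}{\sigma_{n-1,n-1}}p_{n-1}(z)$ has $n+1$ distinct real roots, and moreover that none of these coincide with a root of $p_n$. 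So the first step is to isolate exactly these two claims.

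For the first claim, I would invoke the three-term recurrence for monic orthogonal polynomials: $p_{n+1}(z) = (z - a_n)p_n(z) - b_n p_{n-1}(z)$ with $b_n = \sigma_{n,n}/\sigma_{n-1,n-1} > 0$ (this is the standard identification of the recurrence coefficient as a ratio of norms, already implicit in the $\sigma$-notation of the excerpt and in \cite{gautschi}). Substituting this into $q(z)$ gives $q(z) = (z-a_n)p_n(z) - (1+\chi)\,b_n\,p_{n-1}(z)$. Thus $q$ is — up to the positive rescaling $b_n \mapsto (1+\chi)b_n$ — itself the degree-$(n+1)$ member of a perturbed orthogonal family, i.e. it is the characteristic polynomial of the Jacobi matrix obtained from the true one by replacing the last off-diagonal entry $\sqrt{b_n}$ by $\sqrt{(1+\chi)b_n}$. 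This is a real symmetric (tridiagonal) matrix precisely when $(1+\chi)b_n \ge 0$, hence when $\chi > -1$ (since $b_n>0$); its eigenvalues are then real, and they are distinct because a tridiagonal matrix with nonzero off-diagonal entries — here $(1+\chi)b_n \ne 0$ as long as $\chi\ne -1$ — is non-derogatory and, being symmetric, has simple spectrum. That handles the excluded value $\chi=-1$ naturally and explains why it must be excluded: at $\chi=-1$ the polynomial $q$ degenerates to $(z-a_n)p_n(z)$, which shares all $n$ roots of $p_n$, destroying strict hyperbolicity.

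For the second claim — that $q$ and $p_n$ have no common root — suppose $p_n(z_0)=0$. From $q(z) = (z-a_n)p_n(z) - (1+\chi)b_n p_{n-1}(z)$ we get $q(z_0) = -(1+\chi)b_n p_{n-1}(z_0)$, and since consecutive orthogonal polynomials $p_n, p_{n-1}$ have no common zero (a classical consequence of the three-term recurrence and the interlacing theorem — see Appendix \ref{sec:Interlacing-Theorem}), $p_{n-1}(z_0)\ne 0$; with $\chi\ne -1$ and $b_n>0$ this forces $q(z_0)\ne 0$. Hence all $2n+1 = M+1$ roots of $\mathcal{P}_{M+1}^{(\text{ext})}$ are real and pairwise distinct, which is exactly strict hyperbolicity.

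The main obstacle is the rigorous identification of $b_n = \sigma_{n,n}/\sigma_{n-1,n-1}$ and the legitimacy of the three-term recurrence under the paper's weak hypotheses (only $G_{n-1}$ and $G_{n-2}$ invertible, not $G_n$ positive definite). One route is to derive the recurrence coefficient directly from the explicit formula \eqref{eq:pn+1-again}, matching it against $p_{n+1}(z) = (z-a_n)p_n(z) - b_n p_{n-1}(z)$ and reading off $b_n$ as the coefficient that makes the degrees drop correctly; the positivity $\sigma_{n,n}>0$ then requires $G_n$ to be positive definite, so strictly speaking the symmetric-Jacobi argument gives strict hyperbolicity on the realizable set, while on the larger "invertibility" set one argues via continuity/perturbation or accepts real-but-possibly-non-strict behaviour. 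A cleaner alternative, which I would prefer, is to phrase everything in terms of the sign of $(1+\chi)\sigma_{n,n}\sigma_{n-1,n-1}$ and note that on $\mathcal{R}$ both $\sigma$'s are positive, so the perturbed Jacobi matrix is symmetric iff $\chi > -1$; for $\chi \in (-1,\infty)$ this is immediate, and for $\chi \le -1$ with $\chi\ne -1$ one checks that the perturbed tridiagonal matrix, while no longer symmetric, still has the form of a Jacobi matrix with off-diagonal product $(1+\chi)b_n < 0$, whose spectrum — by the substitution $z \mapsto$ a suitable scaling, or by a direct Sturm-sequence sign-count — need *not* be real, consistent with the corollary only asserting the result for $\chi\ne -1$ together with the implicit positivity regime. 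I would state the corollary's proof for the realizability regime $\chi>-1$ cleanly via the symmetric Jacobi perturbation, and remark that $\chi=-1$ is genuinely excluded because $q$ then shares the roots of $p_n$.
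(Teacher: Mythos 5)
Your proposal takes essentially the same route as the paper's proof: factor the characteristic polynomial as $p_n\cdot q$, use the three-term recurrence to identify $q$ as the $(n+1)$-th polynomial of a Jacobi matrix whose last off-diagonal entry is replaced by $\sqrt{(1+\chi)\beta_n}$ with $\beta_n=\sigma_{n,n}/\sigma_{n-1,n-1}$, and invoke Cauchy interlacing of eigenvalues, exactly as in Appendix~\ref{sec:Interlacing-Theorem}. Your observation that the symmetric-Jacobi argument actually requires $(1+\chi)\beta_n>0$, i.e.\ $\chi>-1$ rather than merely $\chi\ne-1$, is correct and in fact flags a small overstatement in the corollary's hypothesis: the appendix forms $\widehat\beta_n^{1/2}$ and appeals to the interlacing theorem for \emph{symmetric} matrices, both of which presuppose $\widehat\beta_n>0$, so the argument there covers only $\chi>-1$ as well. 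For $\chi<-1$ the modified tridiagonal matrix has a negative off-diagonal product, is not symmetrizable over $\mathbb{R}$, and its eigenvalues can fail to be real (a $2\times2$ example already shows this); since the gauge-invariant choice $\chi=(n+1)/n$ is positive, the overstatement is harmless for the paper's applications, but you were right to note it.
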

\begin{proof}
All orthogonal polynomials $p_{n}$ and $p_{n+1}$ have interlacing
roots. We show in Appendix \ref{sec:Interlacing-Theorem}, that this
also holds true for $p_{n}$ and the modified polynomial 
\begin{align}
\widehat{p}_{n+1}(z) & =p_{n+1}(z)-\chi\frac{\sigma_{n,n}}{\sigma_{n-1,n-1}}p_{n-1}(z),
\end{align}
if $\chi>-1$. Hence, the characteristic polynomial $\mathcal{P}_{M+1}^{(\text{ext})}$
in (\ref{eq:gram-charac-ext}) gives strict hyperbolicity. 
\end{proof}

\subsection{Invariance}

In Sec.~\ref{subsec:InvarianceGram} we have seen the closure (\ref{eq:gramian-closure-ext})
with $\chi=0$ is not Galilean invariant. Below we show that there
is only one choice of $\chi$ which gives invariance for the extended
closure. This invariant closure is equivalent to the closure given
in \cite{FoxLaurent}.
\begin{thm}
The extended Gramian closure (\ref{eq:gramian-closure-ext}) with
the choice $\chi=\frac{n+1}{n}$ is fully gauge-invariant in the
sense of Def.~(\ref{def:gauge-invariance}).
\end{thm}
\begin{proof}
Following Theorem (\ref{thm:gauge-invariance}) we will check the
definitions (\ref{eq:invarianceConditions}) on the characteristic
polynomial (\ref{eq:gram-charac-ext}). The first condition directly
gives for any $\chi\in\mathbb{R}$ 
\begin{align}
\int_{\mathbb{R}}\mathcal{P}_{M+1}^{(\text{ext})}(c)f(c)dc & =\int_{\mathbb{R}}p_{n}(c)\left(p_{n+1}(c)-\chi\frac{\sigma_{n,n}}{\sigma_{n-1,n-1}}p_{n-1}(c)\right)f(c)dc=0
\end{align}
due to the orthogonality of the polynomials. For the second condition
we find after employing orthogonality
\begin{align}
\int_{\mathbb{R}}{\textstyle \frac{d}{dc}}\mathcal{P}_{M+1}^{(\text{ext})}(c)f(c)dc & =-\chi\frac{\sigma_{n,n}}{\sigma_{n-1,n-1}}\int_{\mathbb{R}}p_{n}'(c)p_{n-1}(c)f(c)dc+\int_{\mathbb{R}}p_{n}(c)p_{n+1}'(c)f(c)dc\nonumber\\
 & =-\chi\frac{\sigma_{n,n}}{\sigma_{n-1,n-1}}n\int_{\mathbb{R}}c^{n-1}p_{n-1}(c)f(c)dc+(n+1)\int_{\mathbb{R}}p_{n}(c)c^{n}f(c)dc
\end{align}
which gives zero only for $\chi=\frac{n+1}{n}$. Finally, the third
condition yields
\begin{align}
\int_{\mathbb{R}}c{\textstyle \frac{d}{dc}}\mathcal{P}_{M+1}^{(\text{ext})}(c)f(c)dc & =-\chi\frac{\sigma_{n,n}}{\sigma_{n-1,n-1}}\int_{\mathbb{R}}cp_{n-1}(c)p_{n}'(c)f(c)dc+\int_{\mathbb{R}}cp_{n}(c)p_{n+1}'(c)f(c)dc\label{eq:inv-cond3}
\end{align}
where we use the representation (\ref{eq:pn+1-again}) in the form
\begin{align}
p_{n+1}'(c) & =(n+1)c^{n}-\frac{\sigma_{n,n+1}}{\sigma_{n,n}}p_{n}'(c)+q_{n-1}'(c)
\end{align}
with some $q_{n-1}\in\mathbb{P}_{n-1}$. For the second integral we thus
obtain
\begin{align}
\int_{\mathbb{R}}cp_{n}(c)p_{n+1}'(c)f(c)dc & =(n+1)\int_{\mathbb{R}}c^{n+1}p_{n}(c)f(c)dc-\frac{\sigma_{n,n+1}}{\sigma_{n,n}}\int_{\mathbb{R}}cp_{n}(c)p_{n}'(c)f(c)dc\nonumber\\
 & =(n+1)\sigma_{n,n+1}-n\frac{\sigma_{n,n+1}}{\sigma_{n,n}}\int_{\mathbb{R}}c^{n}p_{n}(c)f(c)dc\nonumber\\
 & =\sigma_{n,n+1}.
\end{align}
The first integral in (\ref{eq:inv-cond3}) is structurally identical
and gives $\sigma_{n-1,n}$ such that 
\begin{align}
\int_{\mathbb{R}}c{\textstyle \frac{d}{dc}}\mathcal{P}_{M+1}^{(\text{ext})}(c)f(c)dc & =-\chi\frac{\sigma_{n,n}}{\sigma_{n-1,n-1}}\sigma_{n-1,n}+\sigma_{n,n+1}=0
\end{align}
because of the closure relation (\ref{eq:closure-relation-ext}) for
any value of $\chi$.
\end{proof}

\subsection{Equilibrium Preservation}
\begin{thm}
The extended Gramian closure (\ref{eq:gramian-closure-ext}) with $\chi=\frac{n+1}{n}$ is equilibrium-preserving for Gaussian equilibria.
\end{thm}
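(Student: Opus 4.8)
The plan is to reduce to one representative Gaussian using the gauge-invariance established above, and then compute everything with Hermite polynomials. We take the gauge-invariant member of the family, $\chi=\tfrac{n+1}{n}$; the direct computation at the end also shows this is the only value of $\chi$ for which equilibrium preservation can hold. Since that closure is gauge-invariant in the sense of Def.~\ref{def:gauge-invariance}, and since via the transformation (\ref{eq:gaugeTransform}) every Maxwellian (\ref{eq:maxwellian}) is obtained from the standard normal $\varphi(c)=(2\pi)^{-1/2}e^{-c^{2}/2}$ by a suitable choice of $(\rho,v,\theta)$, it suffices to verify the equilibrium relation (\ref{eq:equilibriumPreserv}) for $f=\varphi$.

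For $f=\varphi$ the monic orthogonal polynomials $p_{k}$ are precisely the monic (probabilists') Hermite polynomials $h_{k}$, satisfying $c\,h_{k}(c)=h_{k+1}(c)+k\,h_{k-1}(c)$ and $\langle h_{k},h_{l}\rangle_{\varphi}=k!\,\delta_{kl}$. The next step is to evaluate the quantities $\sigma_{k,k}$ and $\sigma_{k,k+1}$ of (\ref{eq:sigma_n,n})--(\ref{eq:sigma_n,n+1}). Since $c^{k}=h_{k}(c)+(\text{lower degree})$ and $p_{k}\perp\mathbb{P}_{k-1}$, one gets $\sigma_{k,k}=\langle h_{k},h_{k}\rangle_{\varphi}=k!$; and writing $\sigma_{k,k+1}=\int_{\mathbb{R}}h_{k}(c)\,c\,c^{k}\varphi\,dc=\int_{\mathbb{R}}\bigl(h_{k+1}(c)+k\,h_{k-1}(c)\bigr)c^{k}\varphi\,dc=k\,\sigma_{k-1,k}$ by orthogonality, and iterating this recurrence down to $\sigma_{0,1}=\int_{\mathbb{R}}c\,\varphi\,dc=0$, shows $\sigma_{k,k+1}=0$ for every $k\ge0$. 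In particular $\sigma_{n,n+1}=\sigma_{n-1,n}=0$, so both sides of the defining relation (\ref{eq:closure-relation-ext}) vanish and hence the \emph{true} moments of $\varphi$ satisfy (\ref{eq:closure-relation-ext}). Because $\sigma_{n-1,n-1}=(n-1)!\neq0$ and $G_{n-1},G_{n-2}$ are invertible (the standing hypothesis), $u_{2n+1}$ occurs linearly in (\ref{eq:closure-relation-ext}) with a nonzero coefficient — this is exactly the equivalence of (\ref{eq:closure-relation-ext}) with the explicit formula (\ref{eq:gramian-closure-ext}) — so the number produced by (\ref{eq:gramian-closure-ext}) on the moments of $\varphi$ is forced to equal the true value $u_{2n+1}^{(\text{eq})}=0$. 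Equivalently, $\sigma_{n,n+1}=0$ makes the first term of (\ref{eq:gramian-closure-ext}) equal $u_{2n+1}^{(\text{eq})}$, while $\sigma_{n-1,n}=0$ annihilates the $\chi$-correction term. By gauge-invariance the conclusion then transfers to every Maxwellian, proving equilibrium preservation.

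For completeness, and to make the role of $\chi$ transparent, I would also carry out the direct check without invoking gauge-invariance: for a Gaussian with parameters $(v,\theta)$ the monic orthogonal polynomials are $p_{k}(c)=\theta^{k/2}h_{k}\!\bigl((c-v)/\sqrt{\theta}\bigr)$, and expanding $(v+\sqrt{\theta}\,y)^{m}$ and using the standard-Gaussian integrals $\int_{\mathbb{R}}h_{j}(y)y^{m}\varphi\,dy$ (which are $0$ for $m<j$, equal $j!$ for $m=j$, and $0$ for $m=j+1$) yields $\sigma_{n,n}=n!\,\theta^{n}$, $\sigma_{n-1,n-1}=(n-1)!\,\theta^{n-1}$, $\sigma_{n,n+1}=(n+1)!\,v\,\theta^{n}$, $\sigma_{n-1,n}=n!\,v\,\theta^{n-1}$; inserting these into (\ref{eq:closure-relation-ext}) shows that relation holds exactly when $(n+1)!\,(n-1)!=\chi\,(n!)^{2}$, i.e.\ iff $\chi=\tfrac{n+1}{n}$. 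I expect the only mildly delicate point of either route to be the careful bookkeeping that matches the Hermite three-term recurrence with the $\sigma$-expressions; beyond that there is no real obstacle.
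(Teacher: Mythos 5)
Your proof is correct and takes essentially the same route as the paper: use gauge invariance to reduce to the standard Gaussian (which, as you note, fixes $\chi=\tfrac{n+1}{n}$), identify the $p_k$ as monic Hermite polynomials, and show $\sigma_{n,n+1}=\sigma_{n-1,n}=0$ so that the closure is forced to output $u_{2n+1}=0$. The paper establishes the vanishing of these integrals via a parity argument on the Hermite polynomials rather than by iterating the three-term recurrence down to $\sigma_{0,1}=0$, but these are two phrasings of the same fact. Your one genuine addition is the closing direct computation for a shifted Gaussian, which shows $\chi=\tfrac{n+1}{n}$ is not merely sufficient but \emph{necessary} for equilibrium preservation over all Maxwellians with $v\ne0$ — the paper's theorem statement suppresses the dependence on $\chi$, and its proof tacitly assumes the gauge-invariant value by invoking the previous theorem, so making that hypothesis and its necessity explicit is a worthwhile clarification rather than a divergence in method.
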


\begin{proof}
As seen from the previous section, the extended closure (with $\chi=\frac{n+1}{n}$) is gauge-invariant. Therefore, it is sufficient to consider the equilibrium distribution (\ref{eq:maxwellian}) with $\theta=1$, $\rho=1$, and $v=0$.
In this case, the moments are given by
\begin{align}
u_{n}^{(\text{eq})} & = \begin{cases}
  \frac{1}{\sqrt{\pi}} \Gamma\left(\frac{n+1}{2}\right), n\,\text{even} \\
  0, n\,\text{odd}.
\end{cases}
\end{align}
Thus, to prove equilibrium preservation in the even case, it suffices to prove that the closure defined by (\ref{eq:closure-relation-ext}) gives $u_{2n+1}=0$ in case of equilibrium for any $n$.

For the equilibrium distribution, which is a Gaussian distribution, the orthogonal polynomials $p_n$ are given by the probabilist's Hermite polynomials $\operatorname{He}_n(c)$. The Hermite polynomials are odd for odd $n$ and even for even $n$. Since the equilibrium distribution is symmetric, the following relation holds:
\begin{align}
\int_{\mathbb{R}}p_{n}(c)c^{m}f(c)dc	=\begin{cases}
0 & \text{if }n+m\text{ is odd}\\
\ne0 & \text{if }n+m\text{ is even}
\end{cases}
\end{align}
We have that for all $k=0,1,...n$ 
\begin{align}
  \int_{\mathbb{R}} p_{k-1}(c) c^{k} f(c) dc = 0\qquad\text{and}\qquad  \int_{\mathbb{R}} p_{k}(c) c^k f(c) dc \ne 0
\end{align}
because these integrals contain the given equilibrium moments $u^{(\text{eq})}_{0},...u^{(\text{eq})}_{2n}$ and not the unknown closure. Therefore, the closure relation (\ref{eq:closure-relation-ext}) reduces in equilibrium to
\begin{align}
  \int_{\mathbb{R}}p_{n}(c)c^{n+1}f(c)dc & =0,
\end{align}
which is an expression to compute the unknown moment $u_{2n+1}$.
Since $p_n$ is monic and either even or odd, we can write it as $p_n(c)=c^n + q_{n-2}(c)$, where $q_{n-2}$ is a polynomial of degree $n-2$ that is also even when $p_n$ is even and odd when $p_n$ is odd.
Therefore,
\begin{align}
  0 = \int_{\mathbb{R}}p_{n}(c)c^{n+1}f(c)dc &= \int_{\mathbb{R}}c^{2n+1}f(c)dc + \int_{\mathbb{R}}q_{n-2}(c)c^{n+1}f(c)dc \nonumber\\
  & = u_{2n+1} + \int_{\mathbb{R}}q_{n-2}(c)c^{n+1}f(c)dc = u_{2n+1},
\end{align}
since $q_{n-2}(c)c^{n+1}$ is an odd polynomial of degree $2n-1$, and the corresponding integral is equal to 0. We conclude that the closure indeed gives $u_{2n+1}=0$ for any $n$ in case of given equilibrium moments.
\end{proof}

\section{Extended Gramian Closure: The Odd Case\label{sec:The-Odd-Case}}

The definition (\ref{eq:sigma_n,n}) above shows that the condition $\sigma_{n,n}=0$
yields the equation
\begin{align}
u_{2n}&=u_{n,2n-1}^{T}G_{n-1}^{-1}u_{n,2n-1}
\label{eq:gram-closure-odd}
\end{align}
as closure relation for $u_{2n}$ in the case of odd $M$ with $n=\frac{M+1}{2}$, which corresponds to the QMOM closure~\cite{mcgraw1997description}. The finding in (\ref{eq:grad_sigma_nn}) 
for the gradient of $\sigma_{n,n}$ yields the characteristic polynomial of the closure as $p_{n}(z)p_{n}(z)$. It is easy
to show that this polynomial satisfies the invariance conditions (\ref{eq:invarianceConditions}),
hence, the closure for $u_{M+1}$ will be gauge-invariant as well.
However, the closure is not strictly hyperbolic. The characteristic
polynomial has $2n$ real-valued roots, but they will always be pair-wise identical for
all choices of realizable moments $\left\{ u_{k}\right\} _{k=0,1,...,M}$, see also the discussion in \cite{FoxLaurent} 

\subsection{Definition}
By following the approach of the closure for even $M$ it is possible
to derive a closure for odd $M$ which is gauge-invariant and avoids
double roots of the characteristic polynomial, at least in general. 
\begin{defn}[Extended Gramian Closure -- Odd]
Let $M\in\mathbb{N}$ be odd and $n=\frac{M+1}{2}$. For an arbitrary
$\chi\in\mathbb{R}$ the condition
\begin{align}
\int_{\mathbb{R}}p_{n-1}(c)c^{n+1}f(c)dc\int_{\mathbb{R}}p_{n-1}(c)c^{n-1}f(c)dc & =\chi\left(\int_{\mathbb{R}}p_{n-1}(c)c^{n}f(c)dc\right)^{2}\label{eq:closure-relation-ext-odd}
\end{align}
defines the relation 
\begin{align}
u_{2n} & =u_{n+1,2n-1}^{T}G_{n-2}^{-1}u_{n-1,2n-3}+\chi\frac{(u_{2n-1}-u_{n,2n-2}^{T}G_{n-2}^{-1}u_{n-1,2n-3})^{2}}{u_{2n-2}-u_{n-1,2n-3}^{T}G_{n-2}^{-1}u_{n-1,2n-3}}\label{eq:gramian-closure-ext-odd}
\end{align}
between $u_{M+1}=u_{2n}$ and $\left\{ u_{k}\right\} _{k=0,1,...,M}$,
which we call \emph{extended Gramian moment closure} for odd $M$.
\end{defn}
As before the explicit closure (\ref{eq:gramian-closure-ext-odd})
is readily computed from the closure relation and the explicit form
of the polynomials (\ref{eq:pn-explicit}). It only requires one linear
solve of a system of size $\frac{M-1}{2}$. 

\noindent \textbf{Examples:} For $M=3$ we have $n=2$ and $G_{n-1}=G_{0}=(u_{0})$
and 
\begin{align}
u_{4} & =\frac{u_{3}u_{1}}{u_{0}}+\chi\frac{(u_{3}-\frac{u_{2}u_{1}}{u_{0}})^{2}}{u_{2}-\frac{u_{1}u_{1}}{u_{0}}},\label{eq:example_odd_u4}
\end{align}
while for $M=5$ and $n=3$ we abbreviate 
\begin{align}
\left(\begin{array}{c}
b_{1}\\
b_{2}
\end{array}\right) & =\left(\begin{array}{cc}
u_{0} & u_{1}\\
u_{1} & u_{2}
\end{array}\right)^{-1}\left(\begin{array}{c}
u_{2}\\
u_{3}
\end{array}\right)
\end{align}
and the closure reads
\begin{align}
u_{6} & =\left(\begin{array}{c}
u_{4}\\
u_{5}
\end{array}\right)^{T}\left(\begin{array}{c}
b_{1}\\
b_{2}
\end{array}\right)+\chi\,{\scriptstyle \frac{\left(u_{5}-\left(\begin{array}{c}
u_{3}\\
u_{4}
\end{array}\right)^{T}\left(\begin{array}{c}
b_{1}\\
b_{2}
\end{array}\right)\right)^{2}}{u_{4}-\left(\begin{array}{c}
u_{2}\\
u_{3}
\end{array}\right)^{T}\left(\begin{array}{c}
b_{1}\\
b_{2}
\end{array}\right)}}.
\end{align}

We note that the only requirement for the existence of the extended
Gramian closure of order $M=2n-1$ is the invertibility of $G_{n-2}$.

\begin{thm}
The extended Gramian closure (\ref{eq:closure-relation-ext-odd})
for odd $M$ with $n=\frac{M+1}{2}$ implies the characteristic polynomial
\begin{align}
\mathcal{P}_{M+1}^{(\text{\textnormal{ext}})}\left(z\right) & =p_{n-1}(z)\left(\widetilde{p}_{n+1}(z)-2\chi\frac{\sigma_{n-1,n}}{\sigma_{n-1,n-1}}p_{n}(z)\right),\label{eq:gram-charac-ext-odd}
\end{align}
 where $\widetilde{p}_{n+1}$ is given by 
\begin{align}
\widetilde{p}_{n+1}(c) & =c^{n+1}-(1,c,\cdots, c^{n-1})G_{n-1}^{-1}u_{n+1,2n}\label{eq:p_n+1-tilde}
\end{align}
which uses the moment $u_{2n}$ implied by the closure (\ref{eq:gramian-closure-ext}). 
\end{thm}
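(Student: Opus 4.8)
\emph{Proof plan.} I would follow the scheme of the even case. Introduce the bilinear quantities $\sigma_{k,m}:=\int_{\mathbb R}p_{k}(c)\,c^{m}f(c)\,dc=u_{k+m}-u_{m,m+k-1}^{T}G_{k-1}^{-1}u_{k,2k-1}$, read off by integrating \eqref{eq:pn-explicit} against $c^{m}f$. With this notation the defining condition \eqref{eq:closure-relation-ext-odd} is $\sigma_{n-1,n+1}\,\sigma_{n-1,n-1}=\chi\,\sigma_{n-1,n}^{2}$, and the predicted moment $u_{2n}$ enters only --- and linearly --- through $\sigma_{n-1,n+1}=u_{2n}-u_{n+1,2n-1}^{T}G_{n-2}^{-1}u_{n-1,2n-3}$, so that solving this scalar relation for $u_{2n}$ recovers \eqref{eq:gramian-closure-ext-odd}. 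Writing $R:=\sigma_{n-1,n+1}\sigma_{n-1,n-1}-\chi\sigma_{n-1,n}^{2}$, we have $R\equiv 0$ along the closure and $\partial_{u_{2n}}R=\sigma_{n-1,n-1}$; hence, by the same implicit-differentiation argument used in the even case, $\mathcal{P}_{M+1}^{(\mathrm{ext})}=\mathcal{D}(R)/\sigma_{n-1,n-1}$ with $\mathcal{D}:=(1,c,\dots,c^{2n})\cdot\mathrm{grad}_{2n}$ (here $M+1=2n$). By the product rule $\mathcal{D}(R)=\sigma_{n-1,n-1}\,\mathcal{D}(\sigma_{n-1,n+1})+\sigma_{n-1,n+1}\,\mathcal{D}(\sigma_{n-1,n-1})-2\chi\,\sigma_{n-1,n}\,\mathcal{D}(\sigma_{n-1,n})$, so the task reduces to evaluating $\mathcal{D}$ on the three $\sigma$'s.

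Two of these are inherited. Since $\sigma_{n-1,n-1}$ involves only $u_{0},\dots,u_{2n-2}$, identity \eqref{eq:grad_sigma_nn} with $n\mapsto n-1$ gives $\mathcal{D}(\sigma_{n-1,n-1})=p_{n-1}(c)^{2}$. For $\mathcal{D}(\sigma_{n-1,n})$ I would re-run the coefficient comparison of Theorem~\ref{thm:The-Gramian-closure} on the bilinear form $u_{n,2n-2}^{T}G_{n-2}^{-1}u_{n-1,2n-3}$ (whose two index vectors are consecutive shifts of one another), obtaining $\mathcal{D}(\sigma_{n-1,n})=\bigl(c^{n}-(1,c,\dots,c^{n-2})G_{n-2}^{-1}u_{n,2n-2}\bigr)p_{n-1}(c)$; by \eqref{eq:pn+1-again} with $n\mapsto n-1$ the first factor equals $p_{n}(c)+\tfrac{\sigma_{n-1,n}}{\sigma_{n-1,n-1}}p_{n-1}(c)$. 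This is the analogue of \eqref{eq:grad_sigma_nn+1}, but I would re-derive rather than quote it, because in that identity the top moment is the \emph{predicted} one, whereas here $u_{2n-1}$ is given --- the coefficient matching is identical, but the sign bookkeeping in $\mathrm{grad}_{2n}$ is not.

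The genuinely new ingredient is $\mathcal{D}(\sigma_{n-1,n+1})$. In $\sigma_{n-1,n+1}=u_{2n}-u_{n+1,2n-1}^{T}G_{n-2}^{-1}u_{n-1,2n-3}$ the moment $u_{2n}$ supplies the monic leading term $c^{2n}$, and differentiating the bilinear form $u_{n+1,2n-1}^{T}G_{n-2}^{-1}u_{n-1,2n-3}$ exactly as in Theorem~\ref{thm:The-Gramian-closure} --- the one difference being that its two index vectors are shifted by \emph{two} rather than one --- yields $\mathcal{D}(\sigma_{n-1,n+1})=\widehat q(c)\,p_{n-1}(c)$ with $\widehat q(c):=c^{n+1}-(1,c,\dots,c^{n-2})G_{n-2}^{-1}u_{n+1,2n-1}$. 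It then remains to identify $\widehat q$ with the $\widetilde p_{n+1}$ of \eqref{eq:p_n+1-tilde}. For this I would apply the Schur-complement identity \eqref{eq:schurinv} (Appendix~\ref{sec:Schur}) to the bordered Gramian $G_{n-1}=\left(\begin{smallmatrix}G_{n-2}&u_{n-1,2n-3}\\ u_{n-1,2n-3}^{T}&u_{2n-2}\end{smallmatrix}\right)$, whose Schur complement is exactly $\sigma_{n-1,n-1}$: expanding $(1,c,\dots,c^{n-1})G_{n-1}^{-1}u_{n+1,2n}$ in this way produces $\widetilde p_{n+1}(c)=\widehat q(c)-\tfrac{\sigma_{n-1,n+1}}{\sigma_{n-1,n-1}}p_{n-1}(c)$, i.e. $\sigma_{n-1,n-1}\widehat q(c)=\sigma_{n-1,n-1}\widetilde p_{n+1}(c)+\sigma_{n-1,n+1}p_{n-1}(c)$.

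Finally I would substitute the three evaluations into the product-rule expansion, pull out the common factor $p_{n-1}(c)$, and replace $\sigma_{n-1,n-1}\widehat q$ using the Schur identity; the bracket collapses to $\sigma_{n-1,n-1}\widetilde p_{n+1}(c)-2\chi\sigma_{n-1,n}p_{n}(c)+\bigl(2\sigma_{n-1,n+1}-2\chi\tfrac{\sigma_{n-1,n}^{2}}{\sigma_{n-1,n-1}}\bigr)p_{n-1}(c)$, and the last coefficient equals $\tfrac{2}{\sigma_{n-1,n-1}}R=0$ along the closure. Dividing by $\sigma_{n-1,n-1}$ leaves the monic polynomial $p_{n-1}(z)\bigl(\widetilde p_{n+1}(z)-2\chi\tfrac{\sigma_{n-1,n}}{\sigma_{n-1,n-1}}p_{n}(z)\bigr)$, which is \eqref{eq:gram-charac-ext-odd}. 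The main obstacle is the new gradient identity for $\mathcal{D}(\sigma_{n-1,n+1})$ together with the Schur rewriting that fixes the normalization of $\widehat q$ as $\widetilde p_{n+1}$; once that is correct, the vanishing of the residual $p_{n-1}$-term by the closure relation is immediate. A secondary point needing care --- as noted --- is the sign bookkeeping in $\mathrm{grad}_{2n}$, since in the odd case the highest given moment $u_{2n-1}$ is not the one being predicted.
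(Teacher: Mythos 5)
Your proposal is correct and follows essentially the same route as the paper: compute the gradient operator on the closure relation via the product rule, evaluate it on each of the three $\sigma$'s (re-using the even-case gradient identities for $\sigma_{n-1,n-1}$ and $\sigma_{n-1,n}$ and deriving the new one for $\sigma_{n-1,n+1}$), use the Schur complement of $G_{n-1}$ over $G_{n-2}$ to identify $\widetilde p_{n+1}$, and let the residual $p_{n-1}^2$-term vanish by the closure. The only cosmetic difference is that you work with the product form $R=\sigma_{n-1,n+1}\sigma_{n-1,n-1}-\chi\sigma_{n-1,n}^{2}$ and divide by $\partial_{u_{2n}}R=\sigma_{n-1,n-1}$ at the end, whereas the paper applies the operator to the quotient form $\sigma_{n-1,n+1}-\chi\sigma_{n-1,n}^{2}/\sigma_{n-1,n-1}$ directly; your caution about the signs in $\mathrm{grad}_{M+1}$ when the top moment in a given $\sigma$ is not $u_{M+1}$ is well placed, since the stated sign convention in the paper only reproduces the characteristic polynomial if all components of the operator carry the same sign.
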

Using (\ref{eq:pn+1-again}) the polynomial $\widetilde{p}_{n+1}$ can be formally related to $p_{n+1}$
by
\begin{align}
\widetilde{p}_{n+1}(c) & =p_{n+1}(c)+\frac{\sigma_{n,n+1}}{\sigma_{n,n}}p_{n}(c)\label{eq:p_n+1-tilde-again}
\end{align}
where both $p_{n+1}$ and $\sigma_{n,n+1}$ require the moment of
degree $2n+1=M+2$. Those are not available in a moment theory of
odd degree $M$ and its closure. However, the representation of $\widetilde{p}_{n+1}$
in (\ref{eq:p_n+1-tilde}) shows that the polynomial is computable using the closure. Equation
(\ref{eq:p_n+1-tilde-again}) will be used in orthogonality arguments.
\begin{proof}
We build onto the findings of the previous sections, in particular the representation (\ref{eq:charac1}) and take the derivative of the closure in the form
\begin{align}
\sigma_{n-1,n+1}-\chi\frac{\sigma_{n-1,n}^{2}}{\sigma_{n-1,n-1}} & =0
\label{eq:extoddclosure2}
\end{align}
where we introduced for $k\in\mathbb{N}$
\begin{align}
\sigma_{k-1,k+1}:= & \int_{\mathbb{R}}p_{k-1}(c)c^{k+1}f(c)dc=u_{2k}-u_{k+1,2k-1}^{T}G_{k-2}^{-1}u_{k-1,2k-3}
\end{align}
while $\sigma_{k-1,k}$ and $\sigma_{k-1,k-1}$ are defined in (\ref{eq:sigma_n,n+1})/(\ref{eq:sigma_n,n}).

We find
\begin{align}
(1,c,\cdots, c^{M+1})\cdot\text{grad}_{M+1}\left(\sigma_{n-1,n+1}\right)=\quad\quad\quad\quad\\
\left(c^{n+1}-(1,c,\cdots, c^{n-2})G_{n-2}^{-1}u_{n+1,2n-1}\right) & \left(c^{n-1}-(1,c,\cdots, c^{n-2})G_{n-2}^{-1}u_{n-1,2n-3}\right)
\end{align}
using the same procedure as before. We identify the
second factor as $p_{n-1}(c)$. For the first we consider the explicit
formula for $p_{n+1}$ in (\ref{eq:pn+1-explicit}) and apply the
Schur complement formula (\ref{eq:schurbilinear}) a second time to
the bilinear form of $G_{n-1}$. This yields
\begin{align}
\widetilde{p}_{n+1}(c) & =c^{n+1}-(1,c,\cdots,c^{n-1})G_{n-1}^{-1}u_{n+1,2n}\nonumber \\
 & =c^{n+1}-(1,c,\cdots, c^{n-2})G_{n-2}^{-1}u_{n+1,2n-1}-\frac{\sigma_{n-1,n+1}}{\sigma_{n-1,n-1}}p_{n-1}(c)\label{oddpn+1}
\end{align}
and generalizing the derivation for $0\le k\le\frac{M}{2}$ we identify 
\begin{align}
(1,c,\cdots, c^{M+1})\cdot\text{grad}_{M+1}\left(\sigma_{k-1,k+1}\right) & =\left(\widetilde{p}_{k+1}(c)+\frac{\sigma_{k-1,k+1}}{\sigma_{k-1,k-1}}p_{k-1}(c)\right)p_{k-1}(c)
\end{align}
which supplements the expressions (\ref{eq:grad_sigma_nn+1}) and (\ref{eq:grad_sigma_nn}).

Using this we can compute the characteristic polynomial
\begin{align}
\mathcal{P}_{M+1}^{(\text{ext})}\left(c\right)  &=(1,c,\cdots, c^{M+1})\cdot\text{grad}_{M+1}\left(\sigma_{n-1,n+1}-\chi\frac{\sigma_{n-1,n}^{2}}{\sigma_{n-1,n-1}}\right) \\
&= p_{n-1}(c)\left(\widetilde{p}_{n+1}(c)-2\chi\frac{\sigma_{n-1,n}}{\sigma_{n-1,n-1}}p_{n}(c)+\left(\frac{\sigma_{n-1,n+1}\sigma_{n-1,n-1}-\chi\sigma_{n-1,n}^{2}}{\sigma_{n-1,n-1}^{2}}\right)p_{n-1}(c)\right).\nonumber 
\end{align}
After invoking the closure relation (\ref{eq:closure-relation-ext-odd}) in the form (\ref{eq:extoddclosure2})
we arrive at the characteristic polynomial (\ref{eq:gram-charac-ext-odd}). 
\end{proof}

Using similar arguments as in Corollary \ref{cor:ext-Gramian-strict} it is possible to show that both factors in the characteristic polynomial (\ref{eq:gram-charac-ext-odd}) have real-valued roots and the closure is hyperbolic. 
Unfortunately, it is not possible to show that this characteristic polynomial will always have distinct roots and, hence, the closure can not be strictly hyperbolic in general. However, for the equilibrium case, empirical numerical investigations with values of $M$ up to 13 show that the roots of the characteristic polynomial are indeed distinct for $n=(M+1)/2$ odd and only the root zero is double in the case of even $n$. Special choices of moments in the phase space may generate more multiple roots of the characteristic polynomial, but the closure is essentially strictly hyperbolic, at least close to equilibrium and for odd $n$. A detailed analysis of the situation and the investigation of different closures for odd $M$ is left for future work.

\subsection{Invariance and Equilibrium Preservation}
First, we consider the gauge-invariance of the closure and prove the following statement.
\begin{thm}
	The extended Gramian closure (\ref{eq:gram-charac-ext-odd}) with
	the choice $\chi=\frac{n+1}{2n}$ is fully gauge-invariant in the
	sense of Def.~(\ref{def:gauge-invariance}).
\end{thm}
\begin{proof}
We follow the same procedure and check the
invariance conditions on the characteristic polynomial. The first condition holds
	due to orthogonality of the polynomials. Using the representation of $\widetilde{p}_{n+1}$ from $ (\ref{oddpn+1}),$ we identify the second condition is satisfied only if $\chi=\frac{n+1}{2n}.$
Following the same procedure in the proof of Theorem 6.2, the third condition becomes
	\begin{align}
		\int_{\mathbb{R}}c{\textstyle \frac{d}{dc}}\mathcal{P}_{M+1}^{(\text{ext})}(c)f(c)dc  
		=2\sigma_{n-1,n+1}-2\chi \frac{\sigma_{n-1,n}^{2}}{\sigma_{n-1,n-1}}
	\end{align}
and this is equal to zero because of the closure relation (\ref{eq:closure-relation-ext}) for any value of $\chi$.
\end{proof}

For equilibrium preservation we have a negative result. 

\begin{observation}
	The extended Gramian closure (\ref{eq:gram-charac-ext-odd}) is not equilibrium-preserving.
\end{observation}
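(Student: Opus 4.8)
The plan is to disprove equilibrium preservation by exhibiting one Gaussian equilibrium on which the closure returns the wrong next moment; since preservation would have to hold for every Maxwellian, this suffices, and the obstruction will turn out to be $\chi$-independent. I would take the standard Gaussian $f^{(\text{eq})}$ with $\rho=\theta=1$, $v=0$, for which the orthogonal polynomials are the probabilist's Hermite polynomials $\operatorname{He}_k$, satisfying $\operatorname{He}_k(-c)=(-1)^k\operatorname{He}_k(c)$, and the given moments $u_0^{(\text{eq})},\dots,u_{2n-1}^{(\text{eq})}$ vanish for odd index and are strictly positive for even index. Here $n=\frac{M+1}{2}\ge 2$, so $G_{n-2}$ is invertible and the closure (\ref{eq:gramian-closure-ext-odd}) is well defined.

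First I would evaluate the three $\sigma$-factors entering the closure relation (\ref{eq:closure-relation-ext-odd}) at equilibrium. Since $p_{n-1}=\operatorname{He}_{n-1}$ has parity $(-1)^{n-1}$, the integrand $p_{n-1}(c)c^{n}$ has parity $(-1)^{n-1}(-1)^{n}=-1$, hence $\sigma_{n-1,n}=\int_{\mathbb{R}}p_{n-1}(c)c^{n}f^{(\text{eq})}(c)\,dc=0$, whereas $\sigma_{n-1,n-1}=\int_{\mathbb{R}}p_{n-1}(c)c^{n-1}f^{(\text{eq})}(c)\,dc=\langle p_{n-1},p_{n-1}\rangle_{f^{(\text{eq})}}>0$. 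Crucially neither of these two quantities involves the unknown moment $u_{2n}$ --- both are built from $u_0^{(\text{eq})},\dots,u_{2n-2}^{(\text{eq})}$ only --- so their equilibrium values may be substituted before the closure is solved. With $\sigma_{n-1,n}=0$ the right-hand side of (\ref{eq:closure-relation-ext-odd}) vanishes for every $\chi$, so the closure forces $\sigma_{n-1,n+1}=0$; since $\sigma_{n-1,n+1}=u_{2n}-u_{n+1,2n-1}^{T}G_{n-2}^{-1}u_{n-1,2n-3}$, the closure returns $u_{2n}=u_{n+1,2n-1}^{T}G_{n-2}^{-1}u_{n-1,2n-3}$, independently of $\chi$.

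Next I would compare this with the true equilibrium value. As $\sigma_{n-1,n+1}$ is affine in $u_{2n}$ with slope one, the closure reproduces $u_{2n}^{(\text{eq})}$ precisely when $\int_{\mathbb{R}}\operatorname{He}_{n-1}(c)c^{n+1}f^{(\text{eq})}(c)\,dc=0$. Expanding the monomial in the Hermite basis, the coefficient of $\operatorname{He}_{n-1}$ in $c^{n+1}$ equals $\binom{n+1}{2}\ne 0$, so by orthogonality $\int_{\mathbb{R}}\operatorname{He}_{n-1}(c)c^{n+1}f^{(\text{eq})}(c)\,dc=\binom{n+1}{2}\langle p_{n-1},p_{n-1}\rangle_{f^{(\text{eq})}}\ne 0$. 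Hence the closure's prediction differs from $u_{2n}^{(\text{eq})}$ by exactly this nonzero amount, and equilibrium preservation fails. The smallest instance $M=3$, $n=2$ makes it concrete: formula (\ref{eq:example_odd_u4}) with $(u_0,u_1,u_2,u_3)=(1,0,1,0)$ yields $u_4=0$ for every $\chi$, while $u_4^{(\text{eq})}\ne 0$.

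There is no real obstacle here, the statement being negative; the remaining work is bookkeeping --- checking the parities, verifying that $\sigma_{n-1,n}$ and $\sigma_{n-1,n-1}$ depend only on the given moments, and pinning down the Hermite coefficient that stubbornly fails to vanish. It is worth contrasting with the even case of Section~\ref{sec:extendedGram}, where the same mechanism forces $\int_{\mathbb{R}}p_n(c)c^{n+1}f^{(\text{eq})}(c)\,dc=0$; there this is exactly the identity producing the correct odd moment $u_{2n+1}^{(\text{eq})}=0$, while in the odd case the integral pinned to zero carries a nonzero even contribution --- the precise source of the discrepancy.
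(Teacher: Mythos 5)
Your proof is correct, and in fact more general than the paper's: the paper disposes of the claim by the single counterexample $M=3$, $n=2$ (observing from (\ref{eq:example_odd_u4}) that the equilibrium moments $u_1=u_3=0$ force $u_4=0$ for every $\chi$, whereas the true Gaussian value is $u_4=3$), while you prove the failure for every $n$ and exhibit the exact discrepancy. Your key mechanism is clean: at a symmetric equilibrium the parity of $\operatorname{He}_{n-1}(c)\,c^n$ forces $\sigma_{n-1,n}=0$, and since $\sigma_{n-1,n}$ and $\sigma_{n-1,n-1}>0$ depend only on the given moments, the closure relation (\ref{eq:closure-relation-ext-odd}) degenerates to $\sigma_{n-1,n+1}=0$ independently of $\chi$; you then compute from the Hermite expansion of $c^{n+1}$ that the true equilibrium value of $\sigma_{n-1,n+1}$ is $\binom{n+1}{2}\langle p_{n-1},p_{n-1}\rangle_{f^{(\text{eq})}}\ne 0$, so the prediction misses $u_{2n}^{(\text{eq})}$ by precisely that amount. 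What your approach buys is a structural explanation for the failure and its $\chi$-independence; what the paper's buys is brevity, since a single counterexample suffices for a nonexistence claim. Both are valid; yours also makes transparent the contrast with the even case, where the analogous forced vanishing $\int p_n(c)c^{n+1}f^{(\text{eq})}\,dc=0$ is the correct equilibrium identity rather than a false one.
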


This can easily be seen from the example~(\ref{eq:example_odd_u4}) for the case of $M=3$: since for equilibrium $u_1=0$, $u_3=0$, it follows from (\ref{eq:example_odd_u4}) that $u_4=0$, which is obviously not the correct value for a Gaussian distribution. The situation is similar for the Gramian closure (\ref{eq:gram-closure-odd}).

\subsection{Summary}
\begin{table}[t]
	\centering
	\resizebox{1\textwidth}{!}{ 
	\begin{tabular}{|M{0.2\textwidth}|M{0.15\textwidth}|M{0.25\textwidth}|M{0.15\textwidth}|M{0.25\textwidth}|}
		\cline{2-5}
		\multicolumn{1}{c|}{} & \multicolumn{2}{c|}{\textbf{Even case}} & \multicolumn{2}{c|}{\textbf{Odd case}} \\ \cline{2-5}
		\multicolumn{1}{c|}{} & \textbf{Gramian} & \textbf{Extended Gramian  } & \textbf{Gramian} & \textbf{Extended Gramian} \\ \hline
		\textbf{Strict hyperbolicity} & 
		\cmark & \cmark\hspace{0.02cm} for all $\chi > -1 $  & \xmark$^{\textcolor{ForestGreen}{**}}$ & \cmark$^{\textcolor{RedOrange}{ **} }$\\ \hline
		\textbf{Gauge invariance} & 
		\xmark & \cmark\hspace{0.02cm} with $\chi=\frac{n+1}{n}$ & \cmark & \cmark\hspace{0.02cm} with $\chi=\frac{n+1}{2n}$ \\ \hline
		\textbf{Equilibrium preservation} & 
		\cmark & \cmark & \xmark & \xmark \\ \hline
		\multicolumn{5}{|c|}{\xmark$^{\textcolor{ForestGreen}{**}} :=$ weak hyperbolicity,\quad \cmark$^{\textcolor{RedOrange}{**}} :=$ close to equilibrium \rule{0pt}{0.5cm}} \\ \hline
	\end{tabular}}
\caption{Overview of the different Gramian closures for both even and odd order $M$. In both cases a simple closure exists (\ref{eq:gramian-closure})/(\ref{eq:gram-closure-odd}) and an extended version (\ref{eq:gramian-closure-ext})/(\ref{eq:gramian-closure-ext-odd}). Each closure has different preservation properties. }
	\label{table:closures}
\end{table}

We have derived 4 different closures, namely a simple and an extended closure for each case of even and odd order $M$. Each closure has different preservation properties. An overview is displayed in Table \ref{table:closures}. It is interesting to see that the three properties considered here are largely independent. Gauge-invariance can be achieved for special parameter choice for all except the first closure. Strict hyperbolicity comes in different levels of which the extreme cases are: always distinct roots and always pair-wise identical roots. The last closure is in between as occasional double or multiple roots can not be excluded, and there are situations with fully distinct roots.  

If the equilibrium distribution is an even function, like the Maxwellian (\ref{eq:maxwellian}) for $v=0$, then equilibrium preservation is easy for the even order closures. In that case they only need to produce a zero moment as equilibrium closure. In the odd case the next moment is even and will have a specific value depending on the form of the equilibrium distribution. However, equilibrium is a concept outside of the theory of orthogonal polynomials and it is not surprising that both odd case closures can not preserve equilibrium. 

The table and considerations so far do not tell how well the next moment is actually approximated by the closure. While it is difficult to find a general estimate of the approximation quality, the error in the prediction can be studied empirically for concrete cases of distribution functions.

\section{Numerical Case Studies\label{sec:numerical}}

To test the prediction quality of the Gramian and extended Gramian closures for the $(M+1)^{th}$ moment, given the set of $M$ lower moments, we work on three test problems based on different families of distribution functions. We consider a case based on the description of shock waves in gas dynamics, as given by the well-known Mott-Smith distribution \cite{cite_6}. Another problem is chosen from plasma dynamics and uses electron-hole distributions within a traveling wave \cite{bujarbarua1981theory}. Lastly, one case is designed based on a strong bimodal distribution to analyze the behavior of the new closures near the realizability boundary. The extended Gramian closures use the values of $\chi$ such that they are gauge-invariant. These are  $\chi=\frac{n+1}{n}$ for the case of even $M$ with $n=\frac{M}{2}$, and $\chi=\frac{n+1}{2n}$ for the odd case with $n=\frac{M+1}{2}$.

\subsection{Other Closure Techniques}

We compare the moment prediction with that of well-known closure techniques. One is constructed by H. Grad \cite{grad1949kinetic} as a perturbation of the equilibrium distribution using a Hermite polynomial expansion. Another approach \cite{levermore1996moment,kogan1969equations,WDreyer1987} is centered on entropy maximization and provides a nonlinear closure that maximizes entropy under the constraints of given moments. 

\subsubsection{Grad's Closure}

The classical moment expansion technique \cite{grad1949kinetic} has been developed as an approach to close the system (\ref{eq:momentSystem}) by using an expansion of the distribution function as
\begin{equation}
f^{(\text{Grad})}(c)=f^{(\text{eq})}(c)\sum_{j=0}^{M}\alpha_{j} \text{He}_{j}(c),
\end{equation}
where $\alpha_{j=0,\dots,M}$ are the coefficients of the ansatz, and $ \text{He}_j$ are Hermite polynomials. The closure is constructed as a perturbation of the equilibrium distribution with coefficients ${\alpha_{j}}$ that satisfy
\begin{equation}
 u_{k}=\int c^{k}f^{(\text{Grad})}(c)dc\hspace{0.1em},\hspace{0.2em}\text{for}\hspace{0.25em} k=0,\dots,M.
 \label{eq:gradmoments}
\end{equation}
This is a linear system for the $\alpha_{j}$ values, whose solution is a mapping between the $\alpha_{j}$ values and given moments. 
The usage of the Grad closure has a long history and many successful results exist \cite{torrilhon2016modeling, StruchBook}. Still, it is known to lose hyperbolicity, particularly at strong non-equilibrium states when using a local equilibrium in the expansion which guarantees gauge-invariance. In our computations, this is achieved
by taking dimensionless moments scaled by density and temperature and shifted by velocity in \eqref{eq:gradmoments} before calculating the distribution and back-transforming the next higher moment. 

A closer look also shows that the Grad distribution is not strictly positive for the large velocity values. However, the Maxwellian term in $f^{(\text{eq})}(c)$ ensures that values for the distribution function for large velocities will be very small. Overall, the Grad closure is expected to work better when used close to equilibrium.
\begin{figure}[t]
	\begin{center}
		\includegraphics[width=\columnwidth]{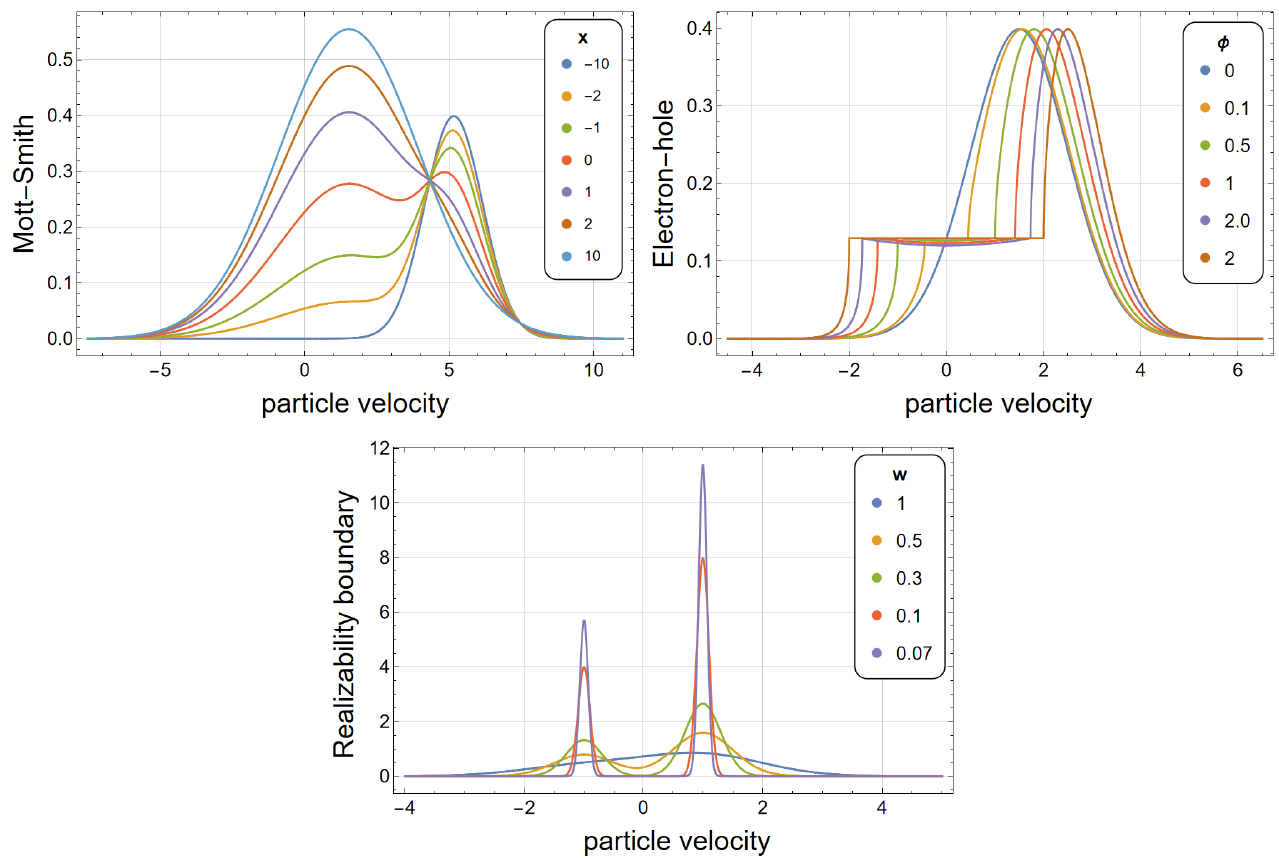}
	\end{center}
	\caption{Family of distribution functions for test problems in subsection  \ref{subsec:mottsmith}, \ref{subsec:electronhole}, and \ref{subsec:bimodal}. The Mott-Smith distribution (a) for shock waves \cite{cite_6} is set up as a bimodal type for shock waves and uses parameters related by the Rankine-Hugoniot conditions. The electron-hole distribution (b) from \cite{bujarbarua1981theory} with $\beta=-0.05$ is a piece-wise function, one part is a shifted Gaussian and the other one is close to a flat-top distribution.  The distribution function in (c) is modeled by a superposition of two Gaussian to analyze the closures near the realizability boundary.
	Apart from some fixed parameters, the change in the distributions is with respect to (a): position, (b): potential, (c): width of two Gaussian. Some example distributions from these families are displayed.  }
	\label{fig:HD}
\end{figure}

\subsubsection{Maximum Entropy Closure}

The maximum entropy closure offers an alternative technique based on designing a distribution function that maximizes the Boltzmann entropy $-f\log f$ and fits all moments considered. This can be defined as an optimization problem 
\begin{equation}
	f^{(\text{ME})}(c)=\operatorname*{\mathbf{argmax}}_{f:\mathbb{R}\to \mathbb{R}^+}
		\left(-\int_{\mathbb{R}}f\log f\hspace{0.1em} dc\right)\\\vspace{0.1em}\quad \text{s.t.}\quad\hspace{0.15em} u_{k}=\int_{\mathbb{R}}c^{k}f^{(\text{ME})}\hspace{0.1em}dc \hspace{0.1em},\hspace{0.2em}\text{for}\hspace{0.25em} k=0,\dots,M.
		\label{MeOpti}
\end{equation} 
The optimization problem is typically solved by Lagrange multipliers ${\lambda_{j=0,1,...M}}$ and the multipliers are chosen such that the given moment constraints are satisfied. If the maximum entropy distribution can be computed, the closure will be hyperbolic\cite{levermore1996moment}. However, for order $M$ larger than two, solving for Lagrange multipliers is not possible analytically and comes with excessive computational cost. Also, well-posedness of the optimization problem also can not be easily guaranteed \cite{Junk}.

For the maximum entropy approach, we do not solve the primal problem (\ref{MeOpti}) for the Lagrange multipliers, but directly solve for point values of $\bm{f}\in\mathbb{R}^{N_{c}}$ on a discrete velocity grid $\{c_{j}\}_{j=1,...,N_{c}}$ on a finite domain $[c_{\text{min}},c_{\text{max}}]$. The optimization problem becomes
\begin{align}
	\bm{f}^{(\text{ME})}=\operatorname*{\mathbf{argmax}}_{\bm{f}\in\mathbb{R}^{N_{c}}}\left(-\sum_{j=1}^{N_{c}}f_{j}\log f_{j}\right)\quad&\text{s.t.}\quad u_{k}=\sum_{j=1}^{N_{c}}(c_{j})^{k}f_{j},\quad\text{for}\,k=0,...M	
\end{align}
and can be solved for the vector $\bm{f}^{(\text{ME})}$ in a straight forward way. To facilitate the convergence the logarithm has been regularized by 
writing $-f\log(f+\varepsilon)$ where $\varepsilon=10^{-8}$.  The finite domain $[c_{\text{min}},c_{\text{max}}]$ and the number of quadrature points $N_c$ are chosen specifically for each numerical test problem below after some experimenting to achieve stable results for all investigated orders of moment $M$.

Note that this approach also allows to produce an approximation of the distribution in case of odd $M$ where the maximum entropy distribution is usually not integrable. We will display these predictions for odd $M$ under the label maximum-entropy below.

\subsubsection{Error Computation}

In the studies below we define a hidden truth which is a distribution function compatible with the corresponding test cases, see Fig.~\ref{fig:HD}. The true moments $\left\{ u^{(\text{hidden})}_{k}\right\} _{k=0,1,...,M+1}$ are calculated with this distribution function $f^{\text{(hidden)}}(c)$ for a certain choice of $M$, including the next moment $M+1$.
For each of the different closure, the next moment $u_{\text{closure}}=u_{M+1}$ is calculated from the moments provided up to $M$ by
\begin{equation}
	u_{\text{closure}}=\int c^{M+1}f^{\text{(recons)}}(c)dc = \mathcal{C}\left(u_{0},u_{1},\cdots, u_{M}\right)
\end{equation}
either using a reconstructed distribution, like in the Grad and maximum entropy closures, or in a direct approach as for all the Gramian closures. 

Since an explicit distribution is not always reconstructed we assess the predictive quality solely relying on the value of the next higher moment. We consider the relative error which is the difference between $u^{(\text{hidden})}_{M+1}$ from the hidden truth and the prediction of the next moment $u_{\text{closure}}$ from the closure method and it is defined by
\begin{align}
	e_\text{rel}(u_{\text{closure}})=\left|\frac{u_{\text{closure}} - u^{(\text{hidden})}_{M+1}}{u^{(\text{hidden})}_{M+1}}\right|.
\end{align}
The error will depend on the underlying distribution and the number of moments $M$.

\subsection{Mott-Smith Distribution}
\begin{figure}[t]
	\begin{center}
		\includegraphics[width=\columnwidth]{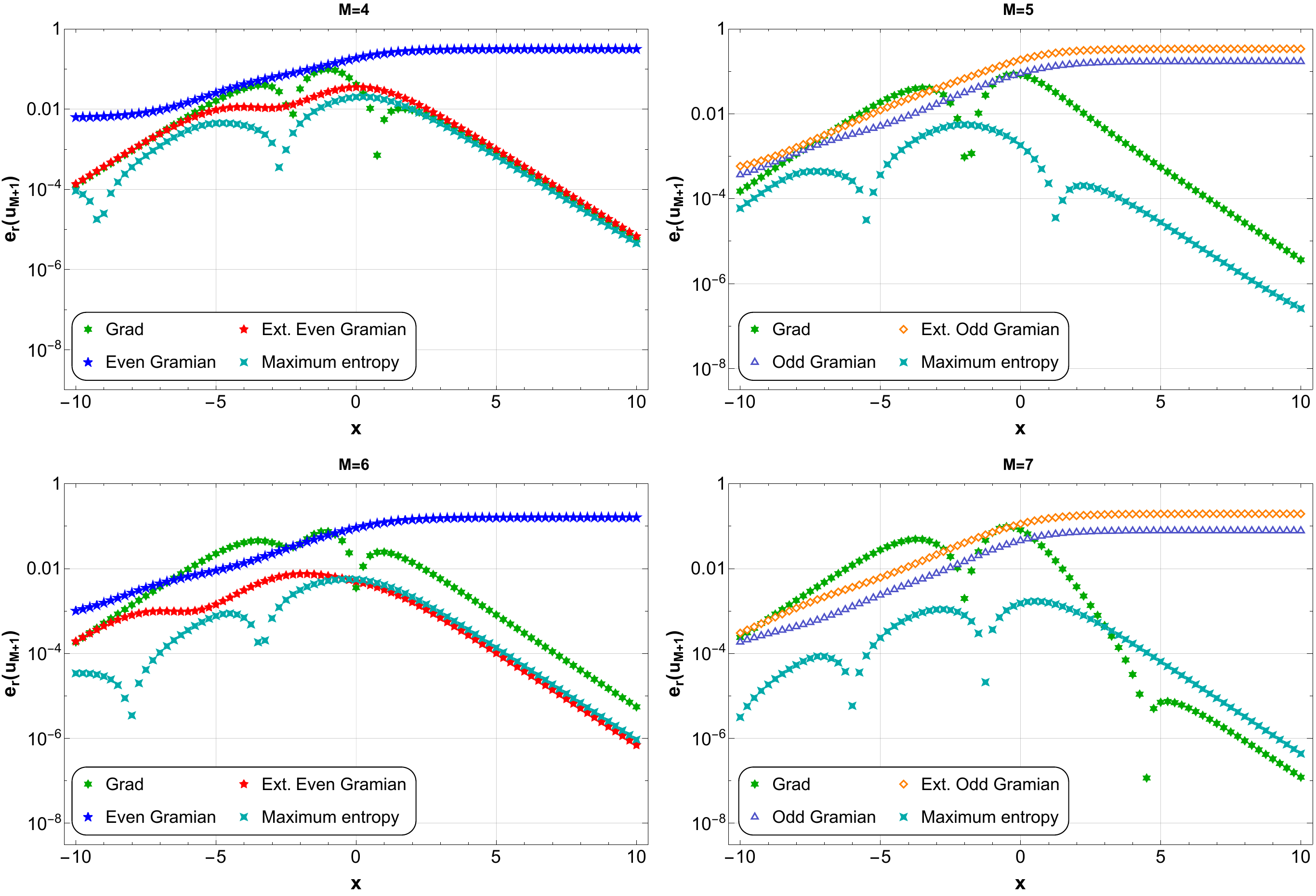}
	\end{center}
	\caption{Relative error $e_r(u_{\text{closure}})$ of the next higher moment calculated for different number of moments using different closure techniques is shown for the Mott-Smith shock wave distribution \cite{cite_6}.  We consider $\text{Ma}=4$, $\gamma=5/3$ and compute moments with the distribution $f_{\text{MS}}$ for each $x$ values from -10 to 10 with step size 0.25. For the maximum entropy closure, we discretize velocity domain $[-6,9]$ with 1000 points. 
Note, that the Gramian and extended Gramian closure are defined differently and marked by different color in the 
plots. }
	\label{fig:MS}
\end{figure}
\label{subsec:mottsmith}
The Mott-Smith distribution function\cite{cite_6} is based on a bimodal superposition of Maxwellian type distributions
\begin{equation}
	f_{\text{BM}}(\rho_{1},v_{1},\theta_{1},\rho_{2},v_{2},\theta_{2},c)=\dfrac{\rho_{1}}{\sqrt{2\pi\theta_{1}}} \exp\left(\dfrac{-(c-v_{1})^2}{2\theta_{1}}\right)+ \dfrac{\rho_{2}}{\sqrt{2\pi\theta_{2}}} \exp\left(\dfrac{-(c-v_{2})^2}{2\theta_{2}}\right),
	\label{eq:Bimodal distr.}
\end{equation}
with densities ($\rho_{i})$, velocities ($v_{i}$), and temperatures $\theta_{i}$ for $i=1,2$. Mott-Smith interpolates between the equilibrium values before and after a shock with Mach number $\text{Ma}$ continuously along a distance $x$ and uses the distribution function in the form
\begin{equation}
	f_{\text{MS}}(\text{Ma},x,c,\gamma)=f_{\text{BM}}\left(\frac{1}{1+\exp(x)}, \text{Ma}\sqrt{\gamma},1,
	\left(1- \frac{1}{1+\exp(x)}\right)\rho_{*},v_{*}\sqrt{\gamma},\theta_{*},c\right)
\end{equation} 
with the Rankine-Hugoniot values $\rho_{*}=({\text{Ma}}^2(\gamma+1))/(2+{M_a}^2(\gamma-1)),$ $v_{*}=(\text{Ma})/\rho_{*},$ $\theta_{*}=(1-\gamma+ 2\gamma {\text{Ma}}^2)/(1+\gamma)\rho_{*}$. The adiabatic coefficient is given by $\gamma$. Fig.~\ref{fig:HD}, part (a), shows how the distribution function changes as the gas passes through a shock wave along the spatial variable $x$. Before the shock, the gas is at equilibrium with a narrow distribution function, then it undergoes compression and heating as it passes through the shock wave, reflected in a broadened and skewed distribution function, and finally emerges in a new equilibrium state with a wider and shifted distribution function at a higher temperature. 

Figure \ref{fig:MS} shows the closure errors along the spatial coordinate $x$ through the shock wave. Before and after the shock we are close to equilibrium, and the Grad and maximum entropy closures, as well as the extended even order Gramian, exhibit relative errors that go to zero. Note that several of the error curves show strong spikes indicating very small errors. These singularities occur at very different positions and vary strongly with the choice of parameters and test cases. At these points the error crosses through zero momentarily as we vary the parameter which results in a spike
in the logarithmic plot of the modulus value. Being difficult to explain, we consider these situations to be outliers and focus on the general trend in the error curves. 

Among the closures by maximum entropy gives clearly the best over-all performance. The Grad closure is also very good for these cases, but cannot be used in practice due to lack of hyperbolicity. In the Gramian family, the extended even-order closure works very well. Note that the other Gramian closures all have global hyperbolicity and the bad approximation performance thereof must be attributed to the lack of other structure-preserving properties, i.e. gauge invariance and equilibrium preservation.

\begin{figure}[t]
	\begin{center}
		\includegraphics[width=\columnwidth]{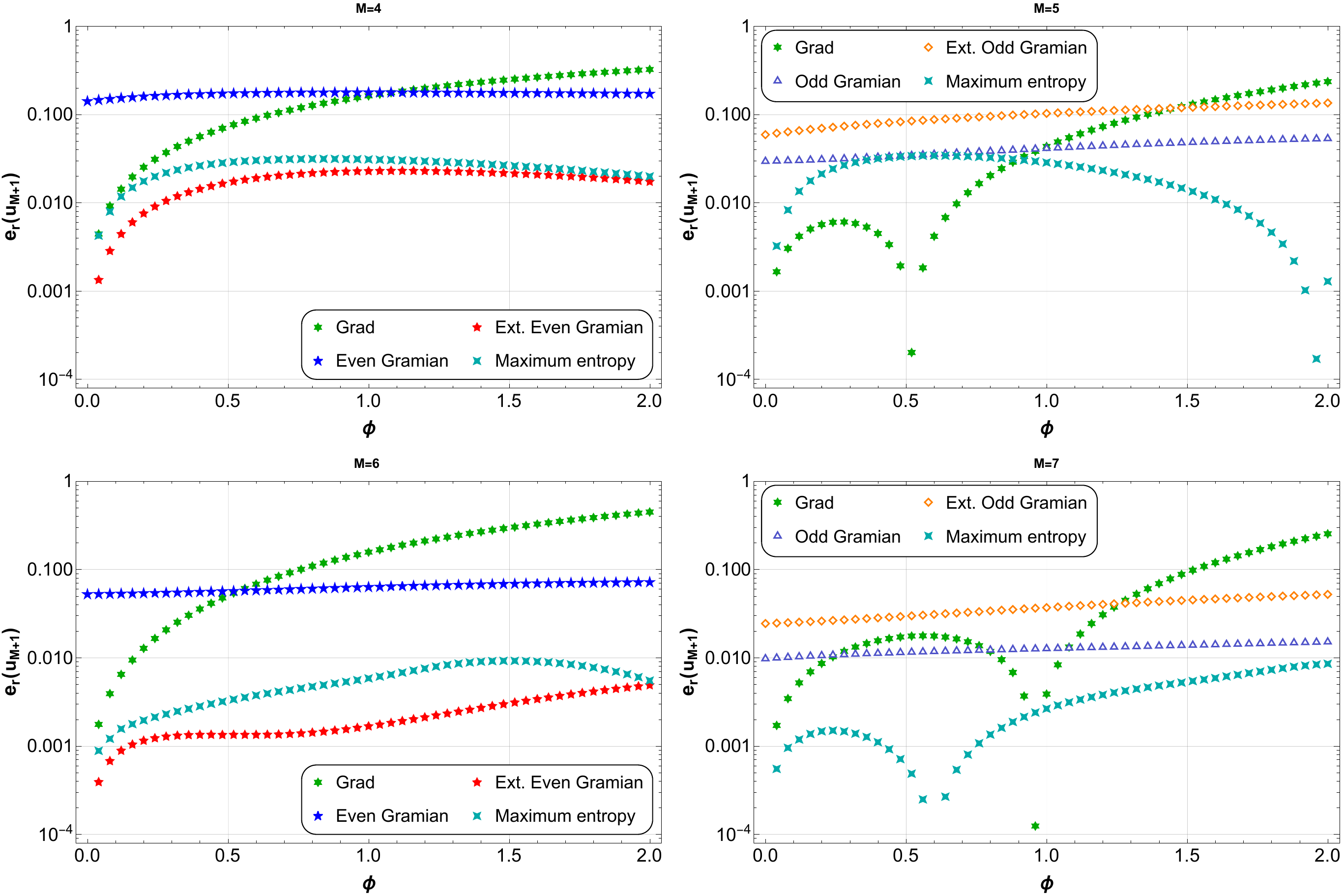}
	\end{center}
	\caption{Relative error between $u_{\text{closure}}$ and $u_{M+1}$
		with fixed parameters $v_{0}=1.5$, $\beta=-0.05$ for the electron hole distribution \cite{bujarbarua1981theory} with
		different electrostatic potentials $0\leq\phi\leq2$ with step size $0.04$. Even (left) and odd (right) case scenarios were examined and are shown separately due to differences in the definition of the Gramian and extended Gramian closures. For the maximum entropy closure, the velocity domain $[-6,8]$ is discretized with 1000 grid points.}
	\label{fig:EH}
\end{figure}

\subsection{Electron-Hole Distribution}
\label{subsec:electronhole}
The distribution function for the electron-hole test problem is defined in \cite{bujarbarua1981theory} and reads 
\begin{equation}
	f_{\text{EH}}(v,\phi,v_{0},\beta)=\begin{cases}
		(2\pi)^{-1/2}\exp\left[-\frac{1}{2}\left(\sign(v)(v^{2}-2\phi)^{1/2}-v_{0}\right)^{2}\right]&, v^{2}>2\phi\\
		(2\pi)^{-1/2}\exp\left[-\frac{1}{2}\left(\beta(v^{2}-2\phi)+{v_{0}}^{2}\right)\right]&, v^{2}<2\phi
	\end{cases}
\end{equation}
where $\phi$ is the electrostatic potential, $\beta$ is a measure for the number of trapped electrons, and $v_{0}$ is the frame velocity. The first part describes the distribution of the untrapped electrons and other is that of the trapped electrons. The effect of the electrostatic potential in the shape of distribution function $f_{\text{EH}} $ can be seen in the part (b) of the Fig.~\ref{fig:HD}. When the electrostatic potential is zero, the distribution function is a shifted Maxwellian, however, a change in the potential $\phi$ increases the deviation from equilibrium of the distribution. 

Fig.~\ref{fig:EH} shows the relative errors of even and odd order closures based on Grad, maximum entropy as well as the Gramian closures. With a potential close to zero the distribution shape gets similar to an equilibrium distribution and the Grad and maximum entropy closures give very small errors. As above, error singularities can be observed in several results. Among the Gramian closures, the extended closure for even order (left column of Fig.~\ref{fig:EH}) clearly stands out among other closures showing errors of similar high quality as maximum entropy. On the right side of the figure, the odd Gramian closure performs better than the extended odd version independently from number of moments. While the bad error in the odd order case can be explained by the missing preservation of equilibrium, it is interesting to note, that the simple Gramian closure in the even order case also shows bad performance, though the equilibrium is preserved. Here, the strong deviation must be associated with the missing gauge-invariance.
\subsection{Realizability Boundary}\label{sec:realizboundary}
\label{subsec:bimodal}
\begin{figure}[t]
	\begin{center}
		\includegraphics[width=1.01\columnwidth]{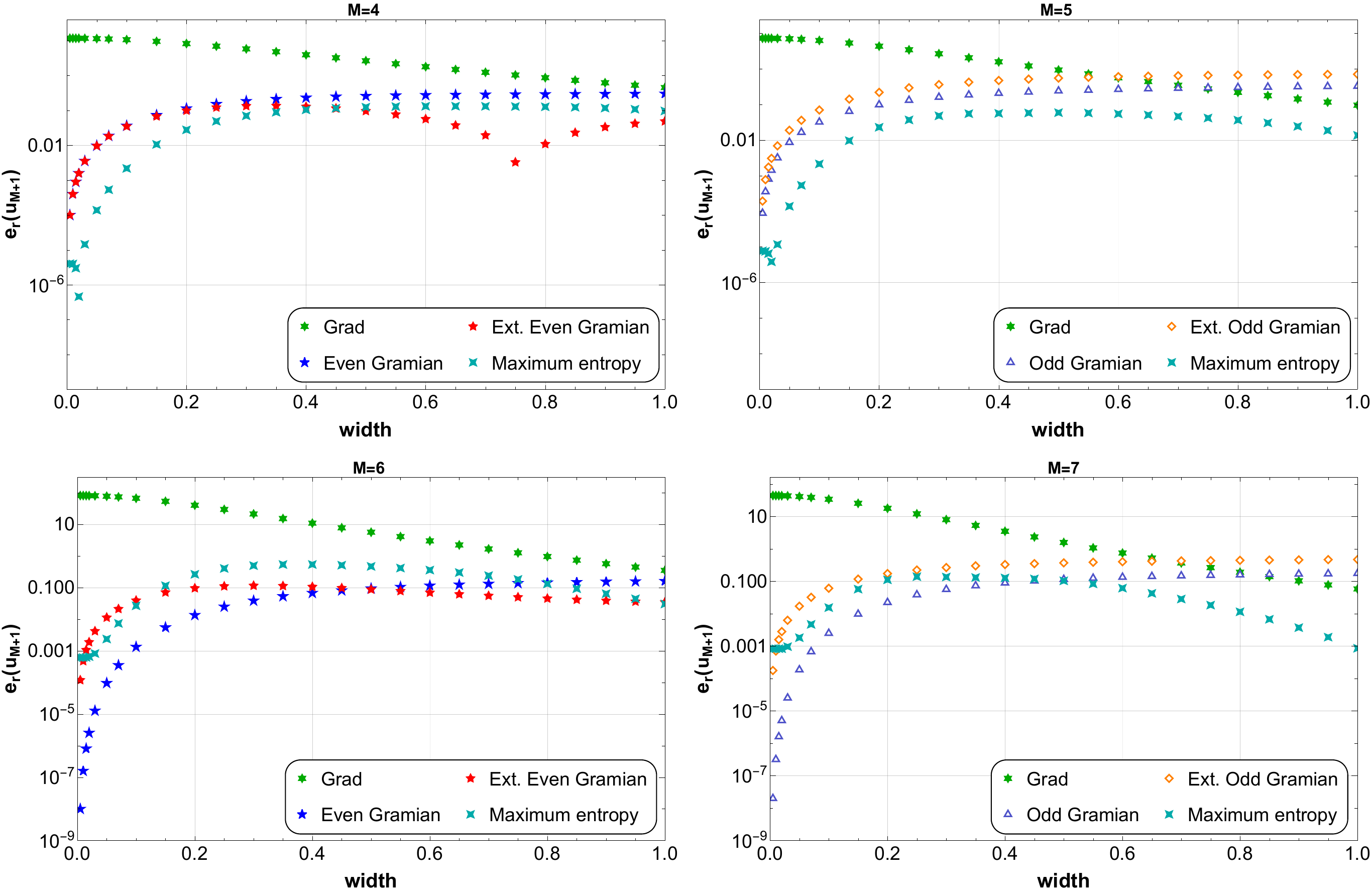}
	\end{center}
		\caption{Relative error $e_r(u_{\text{closure}})$ of the next higher moment for a range of widths $w$ in the test of the realizability boundaries. The left part of the figure shows the relative error for even numbers of moments $M=4,6$ while the right one shows odd cases $M=5,7$. For the maximum entropy closure, computational domain is taken with $c\in[-4,5]$ with 1000 grid points.
	}
	\label{fig:BM}
\end{figure}

On the realizability boundary the distribution becomes a discrete measure given by a superposition of Dirac-deltas \cite{Curto1991,Aheizer1962}. If the distribution is given by $m$ separate Diracs, the Gramian matrix $G_{n}$ in \eqref{eq:gram-matrix} will be singular for $n\ge m$. In these cases the Gramian closures will formally fail due to singular matrices, for both even closures with $M\ge2m+2$, in the odd case with $M\ge 2m+1$, and for the extended odd closures with $M\ge 2m+3$. 

We will investigate this by considering a two Maxwellian-type distribution that gradually degenerates into two separate Dirac delta-functions. The bimodal distribution \eqref{eq:Bimodal distr.} is chosen with parameters $\rho_{1}=1, v_{1}=-1, \sqrt{\theta_{1}}=\sqrt{\theta_{2}}=w, \rho_{2}=2, v_{2}=1$, where $w$ is a measure for the width of the peaks of the distribution. The resulting distributions are shown in part (c) of Fig.~\ref{fig:HD}. For $w \to 0$ the distribution will approach the realizability boundary. The different closures are computed for $w$ values between $0.1$ and $1$ with step size of $0.05$, as well as for a set of smaller values of $w$: $\{0.07, 0.05, 0.03, 0.02, 0.015, 0.01, 0.005\}$.

The relative error of the next moment is plotted in Fig.~\ref{fig:BM}. As expected, the Grad closure has difficulties with strongly bimodal distributions and shows small error only for less bimodal distributions. The maximum entropy closure is formally able to represent a number of Diracs, however, the current numerical method has difficulties in finding a valid approximation. As a consequence the errors stagnate for maximum-entropy for very small $w$. However, overall, the maximum entropy approach mostly provides very good relative errors and is comparable with the Gramian closure methods. 

Remarkably, the new Gramian closures perform well even near the realizability boundary and also for moment numbers beyond the threshold discussed above. This suggests that the evaluation of the closure stays continuous for large $M$ also when approaching the realizability boundary. A proper limit expression could be obtained by analyzing this singularity further.

\begin{figure}[t]
	\begin{center}
		\includegraphics[width=0.75\columnwidth]{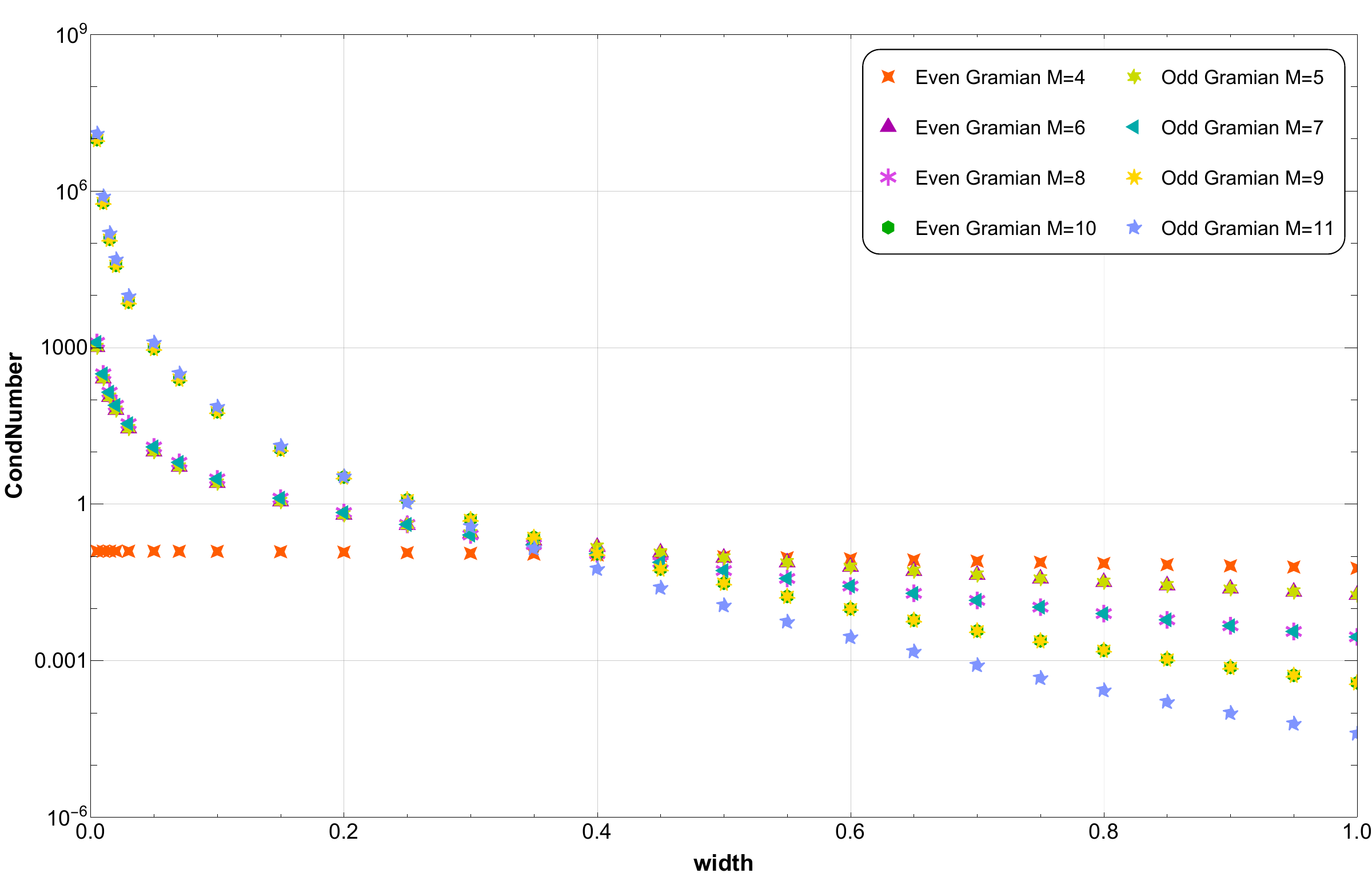}
	\end{center}
	\caption{Condition numbers of different closure methods as a function of the order of the moment theory for the bimodal test problem of subsection \ref{subsec:bimodal}. The x-axis represents the width of the distribution function, and the y-axis displays the corresponding condition number of the Gramian matrix. Each colored line represents a different closure method. Only even and odd Gramian closure results are shown in the figure. The condition numbers of the extended Gramian closures are quite similar to the Gramian closures. Therefore, these results are excluded from the figure. }
	\label{fig:BMAQ}
\end{figure}

The stability of the evaluation of the Gramian closures is affected by the condition numbers of the involved matrices. Figure \ref{fig:BMAQ} shows how the width $w$ of the bimodal distribution affects the condition number for various moment orders. The figure is restricted to the non-extended even and odd Gramian closure relations. For large width values, the distribution function is closer to equilibrium and a decrease in the condition number can be observed as the moment order is increased. In contrast, as the width approaches zero, the distribution function becomes strongly bimodal, and the condition number increases significantly, as expected. In these extreme cases, the condition numbers for $M=5,6,7,8$ and $M=9,10,11$ are nearly identical. Condition numbers in the range of $10^6$ do not strongly affect the numerical evaluation of the closures, especially as the involved matrix dimensions are small. 

\begin{figure}[t]
	\begin{center}
		\includegraphics[width=\columnwidth]{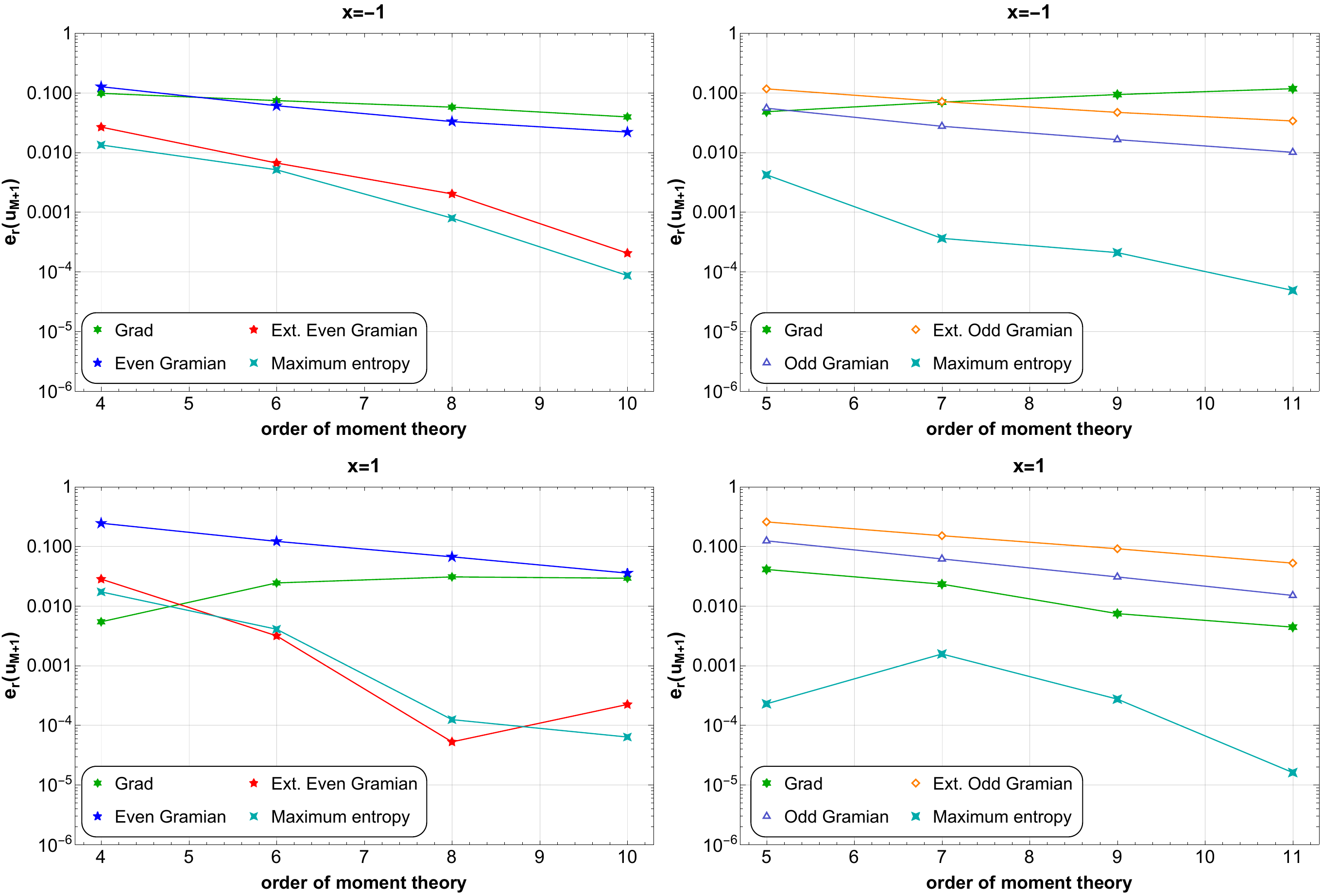}
	\end{center}
	\caption{The relative errors of the next higher moment for the Mott-Smith distribution from subsection \ref{subsec:mottsmith}. The x-axis corresponds to the increasing order of the moment theory, whereas the y-axis depicts the corresponding relative error. As above even and odd cases are shown left and right. Two different scenarios are presented for comparison. The top row shows the result $x = -1$ (strong non-equilibrium), while the bottom row shows the result $x = 1$ (weak non-equilibrium). Each colored line in the figure corresponds to a specific closure, using the identical labeling as in Figure \ref{fig:MS}.}
	\label{fig:MSEAQ}
\end{figure}

\subsection{Empirical Approximation Quality}
Both the closure of Grad and the maximum entropy approach rely on a reconstruction of the underlying distribution from the given set of moments. Increasing the number $M$ provides more information and it is reasonable to expect an ever better approximation of the distribution and consequently the next higher moment $u_{M+1}$. Even though the Gramian closures do not reconstruct a distribution it does make sense to ask about the behavior of the relative error of the next higher moment with the expectation that more given moments improve the accuracy of the closure prediction. 

In this section we study the approximation quality of the different closures by selecting specific distributions and compute the closures with $M=4,6,8,10$ for even, and $M=5,7,9,11$ for odd orders, respectively. The specific distributions are taken from Sec.~\ref{subsec:mottsmith} and Sec.~\ref{subsec:electronhole} above, and the relative errors are displayed in Figs.~\ref{fig:MSEAQ} and \ref{fig:EHEAQ}. We consider:
\begin{itemize}\renewcommand{\itemsep}{0mm}
\item the Mott-Smith distribution at position $x=-1.0$, a strong non-equilibrium situation, in the upper row of Fig.~\ref{fig:MSEAQ}.
\item the Mott-Smith distribution at position $x=1.0$, which is closer to a Maxwellian, in the lower row of Fig.~\ref{fig:MSEAQ}.
\item the electron-hole-distribution with potential $\phi=0.2$, a mild deviation from equilibrium, in the upper row of Fig.~\ref{fig:EHEAQ}.
\item the electron-hole-distribution with potential $\phi=0.2$, a strongly deformed distribution, in the lower row of Fig.~\ref{fig:EHEAQ}.
\end{itemize}
Each row shows the relative error of the next higher moment with increasing order $M$ of the moment theory separately for the even and odd cases of $M$. The Grad closure exhibits the most difficulties to achieve better approximations for larger moment numbers and its error levels are generally high. Only for distributions close to equilibrium an error decay with more moments is visible for the Grad closure. The maximum entropy closure gives for all cases of distributions very low level as well as decay of the relative error with increasing moment order. Interestingly, the error decay of maximum entropy is not monotone in most of the cases. As discussed before, the Grad closure does not provide hyperbolic equations, and while the maximum entropy does, it is computationally very expensive.

The Gramian closures consistently show a decay of the relative error of the closure with increasing order of moments. The closures for odd values of $M$ show relatively large errors, but still exhibit decay for all distributions, even though they lack the preservation of equilibrium. Interestingly, the pure odd Gramian closure, which is only nonstrictly hyperbolic, has better errors than the extended odd closure with improved hyperbolicity. 

The errors of the extended Gramian closure for even values of $M$ show a striking similarity to the results of the maximum entropy approach. Both show very low errors and a non-monotone decay behavior. This is observable for weak and strong non-equilibrium. Consequently, this Gramian closure is a strong contender to be used as efficient replacement of the maximum entropy approach. It is also possible that both closures have a stronger theoretical connection than currently known.

\begin{figure}[t]
	\begin{center}
		\includegraphics[width=\columnwidth]{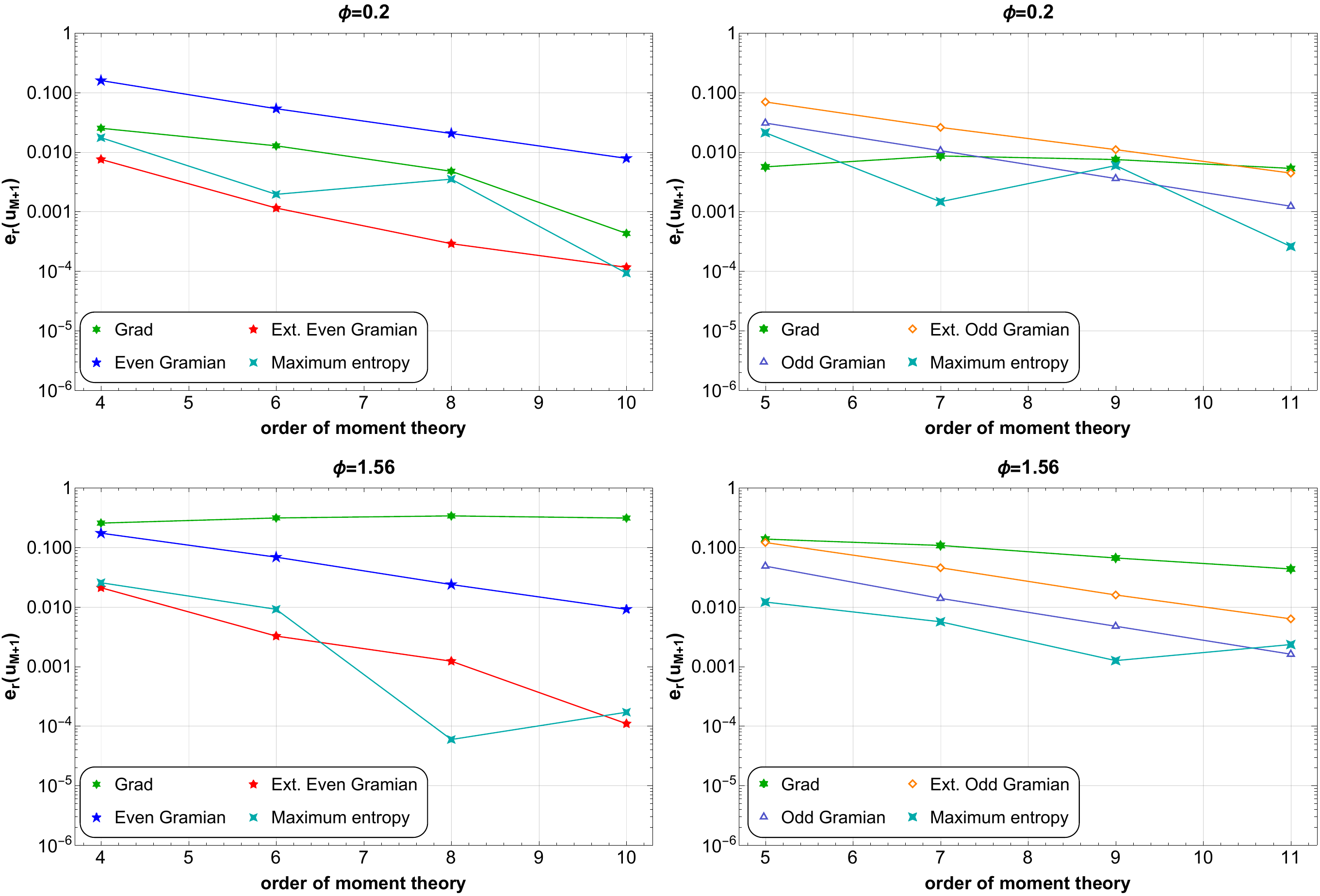}
	\end{center}
	\caption{The relative error of different closure methods for the electron-hole test problem in subsection \ref{subsec:electronhole} plotted against the order of the moment theory. Along the x-axis the order increases, while the y-axis displays the corresponding relative error. Left and right show even and odd order theories, respectively. The relative error results are presented for two different values of electrostatic potential $\theta$, namely 0.2 (top row) and 1.56 (bottom row). Each colored line in the figure corresponds to a different closure method, using the same markers as in Figure \ref{fig:EH}.}
	\label{fig:EHEAQ}
\end{figure}

\section{Conclusion}

This paper presented a family of nonlinear moment closures based on orthogonal polynomials which are tailored to the distribution function that the moments are based on. Using the ideas of \cite{FoxLaurent} as a starting point we formulated explicit expressions for the closures based on Gramian matrices and introduced an underlying theory of global hyperbolicity and gauge-invariance for the new family of closures. The last section investigated error behavior of the closures empirically and demonstrated very good approximation quality on a par with nonlinear closures like the maximum entropy approach. Furthermore, even near the realizability boundary, the prediction of such closures remains accurate. 

The investigation of these closures is just at the beginning. The similarities of the extended Gramian closure for even order theories with the maximum entropy approach, including very similar error behavior patterns, must be studied in detail. The presented theory could also be used to optimize further Gramian closures for the odd order case where an explicit equilibrium must be preserved. 

Eventually, these closures must be used to solve the evolution of kinetic equations. Some 1-dimensional simulation results for the solution of 1d kinetic equations have been shown in \cite{FoxLaurent}. A multi-dimensional extension remains open, but must be found to use these closures in realistic applications like super-sonic rarefied gas flow based on the Boltzmann equations.

\section*{Acknowledgments}
This work has been supported by the German Research Foundation within the research unit DFG-FOR5409. 

\appendix
\section{Schur-Complement\label{sec:Schur}}

We use the Schur complement to simplify the determinants of Gramian
matrices in Sec.~\ref{subsec:OrthPoly}. It also serves to reduce
bilinear forms in the representation of the orthogonal polynomials.
\begin{thm}
Consider a block-structured matrix 
\begin{align}
\left(\begin{array}{cc}
A & B\\
C^{T} & D
\end{array}\right) & \in\mathbb{R}^{(k+l)\times(k+l)}
\end{align}
build from the sub-matrices $A\in\mathbb{R}^{k\times k}$, $B,C\in\mathbb{R}^{k\times l}$,
and $D\in\mathbb{R}^{l\times l}$. The so-called Schur-complement
$S\in\mathbb{R}^{l\times l}$ is given by 
\begin{align}
S & =D-C^{T}A^{-1}B\label{eq:schur}
\end{align}
if the inverse of $A$ exists. If the Schur-complement is also invertible,
the inverse of the block-structured matrix can be written as
\begin{align}
\left(\begin{array}{cc}
A & B\\
C^{T} & D
\end{array}\right)^{-1} & =\left(\begin{array}{cc}
I_{k} & -A^{-1}B\\
0 & I_{l}
\end{array}\right)\left(\begin{array}{cc}
A^{-1} & 0\\
0 & S^{-1}
\end{array}\right)\left(\begin{array}{cc}
I_{k} & 0\\
-C^{T}A^{-1} & I_{l}
\end{array}\right)\label{eq:schurinv}
\end{align}
where $I_{k}$ and $I_{l}$ are identity matrices on $\mathbb{R}^{k}$
and $\mathbb{R}^{l}$. 
\end{thm}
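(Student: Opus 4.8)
The plan is to prove the formula by exhibiting a block $LDU$-type factorization of the matrix and then inverting it factor by factor. First I would introduce the two unit-triangular block matrices
\[
L=\begin{pmatrix} I_k & 0\\ C^{T}A^{-1} & I_l\end{pmatrix},\qquad
R=\begin{pmatrix} I_k & A^{-1}B\\ 0 & I_l\end{pmatrix},
\]
which are well defined because $A^{-1}$ exists, together with the block-diagonal matrix $\mathrm{diag}(A,S)$ with $S=D-C^{T}A^{-1}B$ as in (\ref{eq:schur}). A single block multiplication verifies the identity
\[
\begin{pmatrix} A & B\\ C^{T} & D\end{pmatrix}=L\begin{pmatrix} A & 0\\ 0 & S\end{pmatrix}R .
\]
This step is nothing more than block Gaussian elimination: left-multiplication by $L^{-1}$ clears the $(2,1)$ block, right-multiplication by $R^{-1}$ clears the $(1,2)$ block, and what remains in the lower-right corner is precisely the Schur complement; this also explains where the expression (\ref{eq:schur}) comes from.

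Next I would invert the three factors separately. A unit lower- or upper-triangular block matrix is inverted by simply negating its off-diagonal block, so $L^{-1}=\begin{pmatrix} I_k & 0\\ -C^{T}A^{-1} & I_l\end{pmatrix}$ and $R^{-1}=\begin{pmatrix} I_k & -A^{-1}B\\ 0 & I_l\end{pmatrix}$, each checked by one multiplication. The middle factor inverts blockwise to $\mathrm{diag}(A^{-1},S^{-1})$, and this is the only place the hypothesis that $S$ be invertible is used. Reversing the order of the factors in the inverse of a product then gives
\[
\begin{pmatrix} A & B\\ C^{T} & D\end{pmatrix}^{-1}=R^{-1}\,\mathrm{diag}(A^{-1},S^{-1})\,L^{-1},
\]
which is exactly the asserted formula (\ref{eq:schurinv}).

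Since every step is elementary block arithmetic, I do not expect a genuine obstacle; the only points requiring a little care are the reversal of factor order when inverting the product, and the observation that invertibility of the full matrix need not be assumed separately, as it follows from $\det\!\begin{pmatrix} A & B\\ C^{T} & D\end{pmatrix}=\det A\cdot\det S\ne 0$. As an alternative, and as a sanity check, one can bypass the factorization altogether and simply multiply the right-hand side of (\ref{eq:schurinv}) on the left by $\begin{pmatrix} A & B\\ C^{T} & D\end{pmatrix}$, verifying directly that the four resulting blocks reduce to $I_k$, $0$, $0$, and $I_l$; I would nonetheless present the factorization argument as the main proof because it makes the origin of the Schur complement transparent and is reused implicitly in the determinant simplifications of Sec.~\ref{subsec:OrthPoly}.
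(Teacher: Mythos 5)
Your argument is correct. The paper itself dispatches this theorem in one line (``This can be easily proved by computing the identity~(\ref{eq:schurinv})''), i.e.\ it takes exactly the route you relegate to a sanity check: multiply the proposed inverse against the original block matrix and verify that the product is the identity. You instead derive the formula from a block $LDU$ factorization
\begin{align}
\left(\begin{array}{cc}
A & B\\
C^{T} & D
\end{array}\right)=\left(\begin{array}{cc}
I_{k} & 0\\
C^{T}A^{-1} & I_{l}
\end{array}\right)\left(\begin{array}{cc}
A & 0\\
0 & S
\end{array}\right)\left(\begin{array}{cc}
I_{k} & A^{-1}B\\
0 & I_{l}
\end{array}\right)
\end{align}
and then invert factor by factor. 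The two arguments are computationally equivalent, but yours is the more constructive presentation: it explains \emph{why} $S$ appears, makes transparent that invertibility of $A$ and $S$ together force invertibility of the whole block matrix, and hands you the determinant identity~(\ref{eq:schurdet}) as an immediate byproduct of $\det L=\det R=1$, which the paper invokes separately. The paper's terse verification buys brevity; your factorization buys insight and covers more of the appendix's subsequent claims in one stroke. Both are complete and correct.
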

This can be easily proved by computing the identity (\ref{eq:schurinv}).
The relevance of the Schur complement is extensively discussed, e.g.,
in the text book \cite{LinAlgBook}. In particular, we have the representation
\begin{align}
\det\left(\begin{array}{cc}
A & B\\
C^{T} & D
\end{array}\right) & =\det A\det\left(D-C^{T}A^{-1}B\right)\label{eq:schurdet}
\end{align}
for the determinant which is especially helpful in the case of vectorial
$B,C$, such that the Schur complement is scalar. Additionally, for
two vectors $v,w\in\mathbb{R}^{k+l}$ with
\begin{align}
v=\left(\begin{array}{c}
v_{k}\\
v_{l}
\end{array}\right),\quad & w=\left(\begin{array}{c}
w_{k}\\
w_{l}
\end{array}\right)
\end{align}
with $v_{k},w_{k}\in\mathbb{R}^{k}$ and $v_{l},w_{l}\in\mathbb{R}^{l}$
we obtain
\begin{align}
\left(\begin{array}{c}
v_{k}\\
v_{l}
\end{array}\right)^{T}\left(\begin{array}{cc}
A & B\\
C^{T} & D
\end{array}\right)^{-1}\left(\begin{array}{c}
w_{k}\\
w_{l}
\end{array}\right) & =v_{k}^{T}A^{-1}w_{k}+(v_{l}-v_{k}^{T}A^{-1}B)^{T}S^{-1}(w_{l}-C^{T}A^{-1}w_{k})\label{eq:schurbilinear}
\end{align}
for the bilinear form of the inverse.

\section{Interlacing Theorem\label{sec:Interlacing-Theorem}}

This appendix briefly discusses the interlacing property of the roots
of orthogonal polynomials relevant for strict hyperbolicity, e.g.,
in Sec.~\ref{subsec:extGramHyper}.
\begin{thm}[Cauchy Interlacing or Poincare Separation Theorem]
Consider the symmetric matrix
\begin{align}
A & =\left(\begin{array}{cc}
B & b\\
b^{T} & c
\end{array}\right)\in\mathbb{R}^{n\times n}
\end{align}
with the principal symmetric block $B\in\mathbb{R}^{(n-1)\times(n-1)}$,
sub-block $b\in\mathbb{R}^{n-1}$ and $c\in\mathbb{R}.$ Let $B$
have distinct eigenvalues $\lambda_{1}<\lambda_{2}<\cdots\lambda_{n-1}$
and no eigenvector orthogonal to $b$. Then the eigenvalues $\{\mu_{j}\}_{j=1,...,n}$
of $A$ satisfy 
\begin{align}
\mu_{1}<\lambda_{1}<\mu_{2}<\cdots & <\mu_{n-1}<\lambda_{n-1}<\mu_{n}
\end{align}
such that $\{\lambda_{i}\}$ interlaces the set $\{\mu_{j}\}$.
\end{thm}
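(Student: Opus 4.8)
The plan is to reduce the eigenvalue problem for $A$ to a scalar \emph{secular equation} and then to control a single monotone rational function on each of the $n$ intervals cut out by the $\lambda_i$. First I would write $A-\mu I_n$ in the given block form and apply the Schur-complement determinant identity (\ref{eq:schurdet}) with corner block $B-\mu I_{n-1}$. For $\mu$ not an eigenvalue of $B$ this gives
\[
\det(A-\mu I_n)=\det(B-\mu I_{n-1})\,\big(c-\mu-b^{T}(B-\mu I_{n-1})^{-1}b\big).
\]
Diagonalizing $B=Q\Lambda Q^{T}$ with $\Lambda=\operatorname{diag}(\lambda_1,\dots,\lambda_{n-1})$ and setting $\tilde b=Q^{T}b=(\tilde b_1,\dots,\tilde b_{n-1})^{T}$, the hypothesis that no eigenvector of $B$ is orthogonal to $b$ says exactly that $\tilde b_i\neq0$ for all $i$, and the bracket becomes the secular function
\[
g(\mu):=c-\mu-\sum_{i=1}^{n-1}\frac{\tilde b_i^{2}}{\lambda_i-\mu}.
\]
Clearing denominators shows $\det(A-\mu I_n)=\prod_{i=1}^{n-1}(\lambda_i-\mu)\,g(\mu)$ is a polynomial of degree $n$, so the eigenvalues of $A$ that are not among the $\lambda_i$ are precisely the zeros of $g$.

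Second, I would show that none of the $\lambda_i$ is itself an eigenvalue of $A$; this is the step where the genericity hypothesis genuinely bites and the one I expect to need the most care. The clean route is to evaluate the polynomial at $\mu=\lambda_j$: all terms with an $(\lambda_i-\mu)$, $i\neq j$, factor survive only for the $j$-th summand, giving $\det(A-\lambda_j I_n)=-\tilde b_j^{2}\prod_{i\neq j}(\lambda_i-\lambda_j)\neq0$ since $\tilde b_j\neq0$ and the $\lambda_i$ are distinct. Alternatively one argues directly: $Av=\lambda_j v$ with $v=(w^{T},s)^{T}$ forces $s\tilde b_j=0$ by pairing the top block with the eigenvector $q_j$ of $B$, hence $s=0$, hence $w$ is proportional to $q_j$, and then the bottom row gives $w^{T}b=0$, contradicting $\tilde b_j\neq0$. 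Thus all $n$ eigenvalues of $A$ are zeros of $g$, and because each $\lambda_i$ is a genuine pole of $g$ they lie strictly in $\mathbb{R}\setminus\{\lambda_1,\dots,\lambda_{n-1}\}$.

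Finally I would analyse $g$ on the $n$ open intervals $(-\infty,\lambda_1),(\lambda_1,\lambda_2),\dots,(\lambda_{n-2},\lambda_{n-1}),(\lambda_{n-1},\infty)$. Since $g'(\mu)=-1-\sum_i\tilde b_i^{2}/(\lambda_i-\mu)^{2}<0$, $g$ is strictly decreasing on each of them. At an interior pole one has $g(\mu)\to+\infty$ as $\mu\downarrow\lambda_i$ and $g(\mu)\to-\infty$ as $\mu\uparrow\lambda_i$, while on the two unbounded intervals the term $c-\mu$ gives $g\to+\infty$ at $-\infty$ and $g\to-\infty$ at $+\infty$. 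Hence $g$ has exactly one zero in each of the $n$ intervals; labelling them $\mu_1<\mu_2<\dots<\mu_n$, these are $n$ distinct reals, all eigenvalues of $A$, and since the symmetric matrix $A$ has exactly $n$ eigenvalues they exhaust its spectrum. By construction $\mu_1<\lambda_1<\mu_2<\dots<\mu_{n-1}<\lambda_{n-1}<\mu_n$, which is the asserted strict interlacing. The only real obstacle is the pole/non-eigenvalue argument of the middle paragraph; everything else is elementary calculus on $g$.
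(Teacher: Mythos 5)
Your proof is correct and complete, but there is nothing in the paper to compare it against: the paper does not prove this theorem, it simply states it and refers to \cite{LinAlgBook} for a more general statement and proof. That said, the comparison with the standard textbook route is worth making. The proof of Cauchy interlacing most often cited uses the Courant--Fischer min-max characterization of eigenvalues, which yields only the non-strict inequalities $\mu_i\le\lambda_i\le\mu_{i+1}$ and makes no use of the genericity hypothesis that no eigenvector of $B$ is orthogonal to $b$; one would then need a separate argument to upgrade to strict interlacing. Your secular-equation route via the Schur-complement determinant identity is the natural choice precisely because strictness is the point here: the hypothesis $\tilde b_i\ne 0$ is exactly what guarantees that every $\lambda_i$ is a genuine pole of $g$, which forces the sign changes and places one distinct zero of $g$ in each of the $n$ intervals. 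The one step that needs care, as you flagged, is showing no $\lambda_j$ is an eigenvalue of $A$, and both of your arguments for it are sound: the direct evaluation $\det(A-\lambda_j I_n)=-\tilde b_j^{2}\prod_{i\ne j}(\lambda_i-\lambda_j)\ne 0$ and the eigenvector argument (pairing the top block with $q_j$ gives $s\tilde b_j=0$, hence $s=0$, hence $w\propto q_j$ by simplicity of $\lambda_j$, contradicting $b^{T}q_j=\tilde b_j\ne 0$). The monotonicity and asymptotics of $g$ on each interval are elementary and stated correctly, so the interlacing $\mu_1<\lambda_1<\mu_2<\cdots<\mu_{n-1}<\lambda_{n-1}<\mu_n$ follows.
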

For a more general statement and the proof, see also \cite{LinAlgBook}.
The case $b=0$ generates equality between the eigenvalues of $B$
and $n-1$ eigenvalues of $A$ different from $c$. 

The orthogonal polynomials $p_{k}$ in (\ref{eq:pn-explicit}) satisfy
a three-term recursion 
\begin{align}
p_{k+1}(z) & =(z-\alpha_{k})p_{k}(z)-\beta_{k}p_{k-1}(c),\quad\quad k=1,2,\ldots\label{eq:three-term-rec}
\end{align}
with $p_{0}(z)=1$ and $p_{-1}(z)=0$ and coefficients $\alpha_{k},\beta_{k}$
which follow from integrations of $p_{k}$ and $p_{k-1}$, see, e.g.,
\cite{gautschi,SzegoeOrthogonal}. It is also well-known that the
$n+1$ roots of the polynomial $p_{n+1}$ can be computed as eigenvalues
of the symmetric, tri-diagonal matrix
\begin{align}
J_{n} & =\left(\begin{array}{cccc}
\alpha_{0} & \beta_{1}^{1/2} &  & 0\\
\beta_{1}^{1/2} & \alpha_{1} & \ddots\\
 & \ddots & \ddots & \beta_{n}^{1/2}\\
0 &  & \beta_{n}^{1/2} & \alpha_{n}
\end{array}\right)\in\mathbb{R}^{(n+1)\times(n+1)}\label{eq:three-term-mat}
\end{align}
and that this matrix satisfies the conditions of the theorem above
such that the roots of $p_{n}$ and $p_{n+1}$ are interlacing. In
particular, with the notation of the interlacing theorem the submatrix
$B=J_{n-1}$ has no eigenvector orthogonal to $b=(0,\cdots,0,\beta_{n}^{1/2})^{T}$,
due to the tri-diagonal structure of $B$. 

We now replace the polynomial $p_{n+1}$ by 
\begin{align}
\widehat{p}_{n+1}(z)= & p_{n+1}(z)-\chi\frac{\sigma_{n,n}}{\sigma_{n-1,n-1}}p_{n-1}(z)
\end{align}
which is the relevant factor in the characteristic polynomial (\ref{eq:gram-charac-ext})
of the extended Gramian closure. The recursion (\ref{eq:three-term-rec})
is then modified for the $(n+1)$-th polynomial to 
\begin{align}
\widehat{p}_{n+1}(z) & =(z-\alpha_{n})p_{n}(z)-\widehat{\beta}_{n}p_{n-1}(c)
\end{align}
with
\begin{align}
\widehat{\beta}_{n} & =\beta_{n}+\chi\frac{\sigma_{n,n}}{\sigma_{n-1,n-1}}=(1+\chi)\frac{\sigma_{n,n}}{\sigma_{n-1,n-1}}
\end{align}
where we used the definition of the recursion coefficient $\beta_{n}$
from \cite{gautschi}. We conclude that the discussion of the eigenvalues
given by the matrix (\ref{eq:three-term-mat}) above still holds with
$\widehat{\beta}_{n}$, and the polynomials $p_{n}$ and $\widehat{p}_{n+1}$
will have interlacing roots for $\chi>-1$. This proves Corollary
\ref{cor:ext-Gramian-strict}.

\bibliographystyle{siam}
\bibliography{refBib}

\begin{thebibliography}{10}

\bibitem{abdelmalik2023moment}
{\sc M.~R. Abdelmalik, Z.~Cai, and T.~Pichard}, {\em Moment methods for the
  radiative transfer equation based on $\varphi$-divergences}, Comput. Methods
  Appl. Mech. Eng., 417 (2023), p.~116454.

\bibitem{Aheizer1962}
{\sc N.~Aheizer and M.~Krein}, {\em Some Questions in the Theory of Moments},
  vol.~2 of Translations of Mathematical Monographs, American Mathematical
  Society, 1962.

\bibitem{alldredge2012high}
{\sc G.~W. Alldredge, C.~D. Hauck, and A.~L. Tits}, {\em High-order
  entropy-based closures for linear transport in slab geometry {II}: A
  computational study of the optimization problem}, SIAM Journal on Scientific
  Computing, 34 (2012), pp.~B361--B391.

\bibitem{bohmer2020entropic}
{\sc N.~B{\"o}hmer and M.~Torrilhon}, {\em Entropic quadrature for moment
  approximations of the {Boltzmann-BGK} equation}, J. Comput. Phys., 401
  (2020), p.~108992.

\bibitem{bujarbarua1981theory}
{\sc S.~Bujarbarua and H.~Schamel}, {\em Theory of finite-amplitude electron
  and ion holes}, Journal of Plasma Physics, 25 (1981), pp.~515--529.

\bibitem{cai2015framework}
{\sc Z.~Cai, Y.~Fan, and R.~Li}, {\em A framework on moment model reduction for
  kinetic equation}, SIAM Journal on Applied Mathematics, 75 (2015),
  pp.~2001--2023.

\bibitem{Cercignani1988}
{\sc C.~Cercignani}, {\em The {B}oltzmann Equation and its Applications},
  Applied Mathematical Sciences, Springer, New York, 1988.

\bibitem{chalons2010multi}
{\sc C.~Chalons, R.~Fox, and M.~Massot}, {\em A multi-gaussian quadrature
  method of moments for gas-particle flows in a les framework}, in Proceedings
  of the Summer Program, 2010, pp.~347--358.

\bibitem{chalons2010beyond}
{\sc C.~Chalons, D.~Kah, and M.~Massot}, {\em Beyond pressureless gas dynamics:
  quadrature-based velocity moment models}, arXiv preprint arXiv:1011.2974,
  (2010).

\bibitem{Curto1991}
{\sc R.~Curto}, {\em Recursiveness, positivity and truncated moment problems},
  Houston Journal of Mathematics, 17 (1991), pp.~603--635.

\bibitem{desjardins2008quadrature}
{\sc O.~Desjardins, R.~O. Fox, and P.~Villedieu}, {\em A quadrature-based
  moment method for dilute fluid-particle flows}, Journal of Computational
  Physics, 227 (2008), pp.~2514--2539.

\bibitem{WDreyer1987}
{\sc W.~Dreyer}, {\em Maximisation of the entropy in non-equilibrium}, Journal
  of Physics A: Mathematical and General, 20 (1987), p.~6505.

\bibitem{fox2008quadrature}
{\sc R.~O. Fox}, {\em A quadrature-based third-order moment method for dilute
  gas-particle flows}, Journal of Computational Physics, 227 (2008),
  pp.~6313--6350.

\bibitem{FoxLaurent}
{\sc R.~O. Fox and F.~Laurent}, {\em Hyperbolic quadrature method of moments
  for the one-dimensional kinetic equation}, SIAM J. Appl. Math., 82 (2022),
  pp.~750--771.

\bibitem{fox2023generalized}
{\sc R.~O. Fox, F.~Laurent, and A.~Passalacqua}, {\em The generalized
  quadrature method of moments}, Journal of Aerosol Science, 167 (2023),
  p.~106096.

\bibitem{fox2018conditional}
{\sc R.~O. Fox, F.~Laurent, and A.~Vi{\'e}}, {\em Conditional hyperbolic
  quadrature method of moments for kinetic equations}, Journal of Computational
  Physics, 365 (2018), pp.~269--293.

\bibitem{frank2006partial}
{\sc M.~Frank, B.~Dubroca, and A.~Klar}, {\em Partial moment entropy
  approximation to radiative heat transfer}, J. Comput. Phys., 218 (2006),
  pp.~1--18.

\bibitem{gautschi}
{\sc W.~Gautschi}, {\em On generating orthogonal polynomials}, SIAM J. Sci.
  Stat. Comput., 3 (1982), pp.~289--317.

\bibitem{gautschi1996orthogonal}
{\sc W.~Gautschi}, {\em Orthogonal polynomials: applications and computation},
  Acta numerica, 5 (1996), pp.~45--119.

\bibitem{gillespie2009moment}
{\sc C.~S. Gillespie}, {\em Moment-closure approximations for mass-action
  models}, IET Syst. Biol., 3 (2009), pp.~52--58.

\bibitem{grad1949kinetic}
{\sc H.~Grad}, {\em On the kinetic theory of rarefied gases}, Comm. Pure Appl.
  Math., 2 (1949), pp.~331--407.

\bibitem{hamburger1944hermitian}
{\sc H.~L. Hamburger}, {\em Hermitian transformations of deficiency-index (1,
  1), {J}acobi matrices and undetermined moment problems}, American Journal of
  Mathematics, 66 (1944), pp.~489--522.

\bibitem{huang2020stability}
{\sc Q.~Huang, S.~Li, and W.-A. Yong}, {\em Stability analysis of
  quadrature-based moment methods for kinetic equations}, SIAM Journal on
  Applied Mathematics, 80 (2020), pp.~206--231.

\bibitem{Junk}
{\sc M.~Junk}, {\em Domain of definition of {L}evermore's five-moment system},
  J. Stat. Phys., 93 (1998), pp.~1143--1167.

\bibitem{kershaw1976flux}
{\sc D.~S. Kershaw}, {\em Flux limiting natures own way -- a new method for
  numerical solution of the transport equation}, tech. rep., Lawrence Livermore
  National Lab.(LLNL), Livermore, CA (United States), 1976.

\bibitem{koellermeier2014framework}
{\sc J.~Koellermeier, R.~P. Schaerer, and M.~Torrilhon}, {\em A framework for
  hyperbolic approximation of kinetic equations using quadrature-based
  projection methods.}, Kinet. Relat. Models, 7 (2014).

\bibitem{kogan1969equations}
{\sc M.~N. Kogan and M.~N. Kogan}, {\em The equations of the kinetic theory of
  gases}, Rarefied Gas Dynamics,  (1969), pp.~29--104.

\bibitem{KowalDet}
{\sc G.~Kowalewski}, {\em Einf{\"u}hrung in die Determinantentheorie}, 1925.

\bibitem{kuehn2016moment}
{\sc C.~Kuehn}, {\em Moment closure -- a brief review}, in Control of
  self-organizing nonlinear systems, E.~Sch\"oll, S.~Klapp, and P.~H\"ovel,
  eds., Springer, 2016, pp.~253--271.

\bibitem{laplante2016comparison}
{\sc J.~Laplante and C.~Groth}, {\em Comparison of maximum entropy and
  quadrature-based moment closures for shock transitions prediction in
  one-dimensional gaskinetic theory}, in AIP Conference Proceedings, vol.~1786,
  AIP Publishing, 2016.

\bibitem{levermore1996moment}
{\sc C.~D. Levermore}, {\em Moment closure hierarchies for kinetic theories},
  J. Stat. Phys., 83 (1996), pp.~1021--1065.

\bibitem{levermore1997entropy}
\leavevmode\vrule height 2pt depth -1.6pt width 23pt, {\em Entropy-based moment
  closures for kinetic equations}, Transport Theory and Statistical Physics, 26
  (1997), pp.~591--606.

\bibitem{mcdonald2013affordable}
{\sc J.~McDonald and M.~Torrilhon}, {\em Affordable robust moment closures for
  {CFD} based on the maximum-entropy hierarchy}, J. Comput. Phys., 251 (2013),
  pp.~500--523.

\bibitem{mcgraw1997description}
{\sc R.~McGraw}, {\em Description of aerosol dynamics by the quadrature method
  of moments}, Aerosol science and Technology, 27 (1997), pp.~255--265.

\bibitem{milbrandt2005multimoment}
{\sc J.~Milbrandt and M.~Yau}, {\em A multimoment bulk microphysics
  parameterization. part {II}: A proposed three-moment closure and scheme
  description}, J. Atmos. Sci., 62 (2005), pp.~3065--3081.

\bibitem{mitrovic2007delta}
{\sc D.~Mitrovi{\'c} and M.~Nedeljkov}, {\em Delta shock waves as a limit of
  shock waves}, J. Hyperbolic Differ. Equ., 4 (2007), pp.~629--653.

\bibitem{monreal2008higher}
{\sc P.~Monreal and M.~Frank}, {\em Higher order minimum entropy approximations
  in radiative transfer}, arXiv preprint arXiv:0812.3063,  (2008).

\bibitem{monreal2013moment}
\leavevmode\vrule height 2pt depth -1.6pt width 23pt, {\em Moment realizability
  and {K}ershaw closures in radiative transfer}, 2013.

\bibitem{cite_6}
{\sc H.~M. Mott-Smith}, {\em The solution of the boltzmann equation for a shock
  wave}, Physical Review, 82 (1951), p.~885.

\bibitem{pichard2020moment}
{\sc T.~Pichard}, {\em A moment closure based on a projection on the boundary
  of the realizability domain: 1d case}, Kinetic and related models, 13 (2020),
  pp.~1243--1280.

\bibitem{pichard2022moment}
\leavevmode\vrule height 2pt depth -1.6pt width 23pt, {\em A moment closure
  based on a projection on the boundary of the realizability domain:
  {E}xtension and analysis}, Kinetic and Related Models, 15 (2022), p.~793.

\bibitem{LinAlgBook}
{\sc C.~R.~J. Roger A.~Horn}, {\em Matrix Analysis}, Cambridge University
  Press, 2013.

\bibitem{schmuedgen2017moment}
{\sc K.~Schm{\"u}dgen}, {\em The Moment Problem}, Graduate Texts in
  Mathematics, Springer International Publishing, 2017.

\bibitem{schneider2016kershaw}
{\sc F.~Schneider}, {\em Kershaw closures for linear transport equations in
  slab geometry {I}: Model derivation}, Journal of Computational Physics, 322
  (2016), pp.~905--919.

\bibitem{Shohat1945problem}
{\sc J.~Shohat and J.~Tamarkin}, {\em The problem of moments}, AMS, Providence,
  1945.

\bibitem{StruchBook}
{\sc H.~Struchtrup}, {\em Macroscopic Transport Equations for Rarefied Gas
  Flows}, Interaction of Mechanics and Mathematics, Springer, New York, 2005.

\bibitem{SzegoeOrthogonal}
{\sc G.~Szeg{\"o}}, {\em Orthogonal Polynomials}, AMS, Providence, 1939.

\bibitem{taunay2023quadrature}
{\sc P.-Y.~C. Taunay and M.~E. Mueller}, {\em Quadrature-based moment methods
  for kinetic plasma simulations}, Journal of Computational Physics, 473
  (2023), p.~111700.

\bibitem{torrilhon2016modeling}
{\sc M.~Torrilhon}, {\em Modeling nonequilibrium gas flow based on moment
  equations}, Annu. Rev. Fluid Mech., 48 (2016), pp.~429--458.

\bibitem{van2021higher}
{\sc M.~Van~Cappellen, M.~R. Vetrano, and D.~Laboureur}, {\em Higher order
  hyperbolic quadrature method of moments for solving kinetic equations},
  Journal of Computational Physics, 436 (2021), p.~110280.

\bibitem{yuan2012extended}
{\sc C.~Yuan, F.~Laurent, and R.~Fox}, {\em An extended quadrature method of
  moments for population balance equations}, J. Aerosol Sci., 51 (2012),
  pp.~1--23.

\bibitem{zhang2024dissipativeness}
{\sc R.~Zhang, Y.~Chen, Q.~Huang, and W.-A. Yong}, {\em Dissipativeness of the
  hyperbolic quadrature method of moments for kinetic equations}, arXiv
  preprint arXiv:2406.13931,  (2024).

\end{thebibliography}

\end{document}